\numberwithin{equation}{section}
\theoremstyle{plain}
\newtheorem{Cor}[equation]{Corollary}
\newtheorem{Lem}[equation]{Lemma}
\newtheorem{Prop}[equation]{Proposition}
\newtheorem{Thm}[equation]{Theorem}
\theoremstyle{definition}
\newtheorem{Defn}[equation]{Definition}
\newtheorem{Ex}[equation]{Example}
\newtheorem{Nt}[equation]{Notation}
\theoremstyle{remark}
\newtheorem{Rk}[equation]{Remark}
\newtheorem{Rks}[equation]{Remarks}
\theoremstyle{plain}
\newtheorem*{Cl*}{Claim}
\newtheorem*{Conj*}{Conjecture}
\newtheorem*{Lem*}{Lemma}
\newtheorem*{Prop*}{Proposition}
\newtheorem*{Q*}{Question}
\newtheorem*{Schol*}{Scholium}
\newtheorem*{SubCl*}{Subclaim}
\newtheorem*{Thm*}{Theorem}
\theoremstyle{definition}
\newtheorem*{Cond*}{Condition}
\newtheorem*{Cstr*}{Construction}
\newtheorem*{Defn*}{Definition}
\newtheorem*{Ex*}{Example}
\newtheorem*{Exs*}{Examples}
\newtheorem*{Md*}{Method}
\newtheorem*{Nt*}{Notation}
\newtheorem*{Pty*}{Property}
\theoremstyle{remark}
\newtheorem*{Rk*}{Remark}
\newtheorem*{Rks*}{Remarks}
\newtheorem*{A-d}{Aside}
\newcommand{\cag}{\begin{equation}\begin{gathered}}
\newcommand{\caag}{\end{gathered}\end{equation}}
\newcommand{\caw}{\begin{equation*}\begin{gathered}}
\newcommand{\caaw}{\end{gathered}\end{equation*}}
\newcommand{\e}{\begin{equation}\begin{aligned}}
\newcommand{\ee}{\end{aligned}\end{equation}}
\newcommand{\ew}{\begin{equation*}\begin{aligned}}
\newcommand{\eew}{\end{aligned}\end{equation*}}
\newcommand{\bcd}{\begin{tikzcd}}
\newcommand{\ecd}{\end{tikzcd}}
\newcommand{\bma}{\begin{matrix}}
\newcommand{\ema}{\end{matrix}}
\newcommand{\bpm}{\begin{pmatrix}}
\newcommand{\epm}{\end{pmatrix}}
\newcommand{\bvm}{\begin{vmatrix}}
\newcommand{\evm}{\end{vmatrix}}
\newcommand{\nts}{\begin{tcolorbox}}
\newcommand{\ntss}{\end{tcolorbox}}
\newcommand{\cref}[1]{Corollary \ref{#1}}
\newcommand{\dref}[1]{Definition \ref{#1}}
\newcommand{\eref}[1]{eqn.\hspace{0.6mm}(\ref{#1})}
\newcommand{\erefs}[1]{eqns.\hspace{0.6mm}(\ref{#1})}
\newcommand{\exref}[1]{Example \ref{#1}}
\newcommand{\lref}[1]{Lemma \ref{#1}}
\newcommand{\lrefs}[1]{Lemmas \ref{#1}}
\newcommand{\ntref}[1]{Notation \ref{#1}}
\newcommand{\pref}[1]{Proposition \ref{#1}}
\newcommand{\rref}[1]{Remark \ref{#1}}
\newcommand{\sref}[1]{\S\ref{#1}}
\newcommand{\tref}[1]{Theorem \ref{#1}}
\newcommand{\trefs}[1]{Theorems \ref{#1}}
\newcommand{\mns}[1]{\mbox{\normalsize \(#1\)}}
\newcommand{\mla}[1]{\mbox{\large \(#1\)}}
\newcommand{\mLa}[1]{\mbox{\Large \(#1\)}}
\newcommand{\bb}[1]{\mathbb{#1}}
\newcommand{\cal}[1]{\mathscr{#1}}
\newcommand{\fr}[1]{\mathfrak{#1}}
\newcommand{\mb}[1]{\mbox{\boldmath \(#1\)}}
\newcommand{\mc}[1]{\mathcal{#1}}
\newcommand{\as}{\hspace{5mm}\text{as}\hspace{5mm}}
\newcommand{\et}{\hspace{5mm}\text{and}\hspace{5mm}}
\newcommand{\hs}[1]{\hspace{#1}}
\newcommand{\on}{\hspace{5mm}\text{on}\hspace{5mm}}
\newcommand{\vs}[1]{\vspace{#1}}
\DeclareMathSymbol{\Alpha}{\mathalpha}{operators}{"41}
\DeclareMathSymbol{\Beta}{\mathalpha}{operators}{"42}
\DeclareMathSymbol{\Epsilon}{\mathalpha}{operators}{"45}
\DeclareMathSymbol{\Zeta}{\mathalpha}{operators}{"5A}
\DeclareMathSymbol{\Eta}{\mathalpha}{operators}{"48}
\DeclareMathSymbol{\Iota}{\mathalpha}{operators}{"49}
\DeclareMathSymbol{\Kappa}{\mathalpha}{operators}{"4B}
\DeclareMathSymbol{\Mu}{\mathalpha}{operators}{"4D}
\DeclareMathSymbol{\Nu}{\mathalpha}{operators}{"4E}
\DeclareMathSymbol{\Omicron}{\mathalpha}{operators}{"4F}
\DeclareMathSymbol{\Rho}{\mathalpha}{operators}{"50}
\DeclareMathSymbol{\Tau}{\mathalpha}{operators}{"54}
\DeclareMathSymbol{\Chi}{\mathalpha}{operators}{"58}
\DeclareMathSymbol{\omicron}{\mathord}{letters}{"6F}
\newcommand{\al}{\alpha}
\newcommand{\be}{\beta}
\newcommand{\ga}{\gamma}
\newcommand{\ep}{\varepsilon}
\renewcommand{\th}{\theta}
\newcommand{\io}{\iota}
\newcommand{\ka}{\kappa}
\newcommand{\la}{\lambda}
\newcommand{\si}{\sigma}
\newcommand{\ph}{\phi}
\newcommand{\ch}{\chi}
\newcommand{\Ga}{\Gamma}
\newcommand{\Th}{\Theta}
\newcommand{\La}{\Lambda}
\newcommand{\Si}{\Sigma}
\newcommand{\Ph}{\Phi}
\newcommand{\Om}{\Omega}
\newcommand{\<}{\langle}
\newcommand{\?}{\rangle}
\newcommand{\ds}{\oplus}
\newcommand{\lqt}[2]{\left.\raisebox{-1mm}{\(#2\)}\middle\backslash\raisebox{1mm}{\(#1\)}\right.}
\newcommand{\rqt}[2]{\left.\raisebox{1mm}{\(#1\)}\middle/\raisebox{-1mm}{\(#2\)}\right.}
\newcommand{\ts}{\otimes}
\newcommand{\x}{\times}
\newcommand{\del}{\partial}
\newcommand{\Hocl}[1]{\overset{\circ}{H^{#1}}_{\kern-1.9mm\cl}}
\newcommand{\lop}{\left\|\kern-1.30mm\left\|}
\newcommand{\op}{\|\kern-1.30mm\|}
\newcommand{\rop}{\right\|\kern-1.30mm\right\|}
\newcommand{\SI}{\operatorname{\cal{I}\kern-1.5pt nd}}
\newcommand{\C}{\Subset}
\newcommand{\cc}{\subseteq}
\newcommand{\es}{\emptyset}
\newcommand{\mt}{\mapsto}
\newcommand{\osr}{\backslash}
\newcommand{\oto}[1]{\xrightarrow{#1}}
\newcommand{\pc}{\subset}
\newcommand{\B}{\mathrm{B}}
\newcommand{\cl}{\mathrm{closed}}
\newcommand{\dd}{\mathrm{d}}
\newcommand{\diam}{\operatorname{diam}}
\newcommand{\emb}{\hookrightarrow}
\newcommand{\M}{\mathrm{M}}
\newcommand{\s}{\odot}
\renewcommand{\ss}[2][{}]{\bigodot{\hspace{-1mm}}^{#2}_{#1}\hspace{0.6mm}}
\newcommand{\T}{\mathrm{T}}
\newcommand{\tl}{{\mns{\sim}}}
\newcommand{\w}{\wedge}
\newcommand{\0}{\infty}
\newcommand{\1}{\cdot}
\renewcommand{\ge}{\geqslant}
\newcommand{\gl}{\hspace{0.4mm}\raisebox{0.8mm}{\(>\)}\kern-1.8mm\raisebox{-0.8mm}{\(<\)}\hspace{0.4mm}}
\newcommand{\gle}{\hspace{0.4mm}\raisebox{1.2mm}{\(\ge\)}\kern-1.8mm\raisebox{-1.2mm}{\(\le\)}\hspace{0.4mm}}
\renewcommand{\le}{\leqslant}
\newcommand{\g}{\(\mathrm{G}_2\)}
\newcommand{\GL}{\operatorname{GL}}
\newcommand{\Stab}{\operatorname{Stab}}
\renewcommand{\iff}{if and only if}
\newcommand{\rmm}{Riemannian metric}
\newcommand{\rsm}{Riemannian semi-metric}
\newcommand{\sqfs}{stratified quasi-Finslerian structure}
\newcommand{\srm}{stratified Riemannian metric}
\newcommand{\srsm}{stratified Riemannian semi-metric}
\newcommand{\stp}{suffices to prove}
\newcommand{\Wlg}{Without loss of generality}
\newcommand{\wlg}{without loss of generality}
\newcommand{\wrt}{with respect to}
\newcommand{\GH}{Gromov--Hausdorff}
\newcommand{\ol}[1]{\overline{#1}}
\newcommand{\h}{\widehat}
\newcommand{\lt}{\left}
\newcommand{\m}{\middle}
\newcommand{\rt}{\right}
\newcommand{\tld}{\widetilde}
\title{A general collapsing result for families of stratified Riemannian metrics on orbifolds}
\author{Laurence H. Mayther}
\begin{document}\fontsize{10pt}{12pt}\selectfont
\begin{abstract}
\footnotesize{This paper proves a general collapsing result for families of stratified Riemannian metrics \(\h{g}^\mu\) on a compact orbifold \(E\), subject to suitable limiting conditions on the metrics \(\h{g}^\mu\) as \(\mu \to \0\).  The result is distinct from similar theorems in the literature since it does not require bounds on curvature or injectivity radius of \(\lt(E,\h{g}^\mu\rt)\) and thus allows for Gromov--Hausdorff limits of \(\lt(E,\h{g}^\mu\rt)\) which have strictly lower dimension than \(E\).  The paper also introduces and studies a new class of stratified fibrations between orbifolds, termed weak submersions, and new classes of geometric structures on orbifolds, termed \srm s, \srsm s and stratified quasi-Finslerian structures, all of which play a key role in the proof of the main theorem.}
\end{abstract}
\maketitle

\section{Introduction}

Let \(E_1\) be a compact orbifold and, for simplicity, let \(\lt(g^\mu\rt)_{\mu \in [1,\0)}\) be a family of \rmm s on \(E_1\).  Recall that each \(g^\mu\) induces a length structure on the set of piecewise-\(C^1\) paths in \(E_1\) (by integrating the length of the tangent vector along the path) and hence defines an intrinsic metric on \(E_1\), denoted \(d^\mu\) (see \sref{LS} for further details).  The main result of this paper, \tref{Cvgce-Thm-1-FV}, provides sufficient conditions for the metric spaces \(\lt(E_1,d^\mu\rt)\) to converge in the Gromov--Hausdorff sense to a compact orbifold \((B,d)\).  Additionally, \tref{Cvgce-Thm-1-FV} provides an explicit description of the length structure on piecewise-\(C^1\) paths in \(B\) inducing the metric \(d\).

The full statement \tref{Cvgce-Thm-1-FV} is somewhat technical.  Thus, for the purposes of the introduction, I state two special cases of the theorem, referring the reader to \sref{MT-section} for further details.  Let \(E_2\) be a second compact orbifold, let \(S_1\), \(S_2\) be compact subsets of \(E_1\), \(E_2\), respectively, and suppose that the complements \(E_1 \osr S_1\) and \(E_2 \osr S_2\) are diffeomorphic; henceforth, identify \(E_1 \osr S_1\) with \(E_2 \osr S_2\) and {\it vice versa}.  Let \(\lt(U^{(r)}_1\rt)_{r \in (0,1]}\) be open neighbourhoods of \(S_1\) in \(E_1\) such that \(U^{(r)}_1 \cc U^{(s)}_1\) for \(r < s\) and let \(\lt(U^{(r)}_2\rt)_{r \in (0,1]}\) denote the corresponding open neighbourhoods of \(S_2\) in \(E_2\).  Suppose that the regions \(U^{(r)}_i(j)\) are `compatibly fibred' in the sense that there exist surjective maps \(\fr{f}_i: U^{(1)}_i \to S\) to a common set \(S\) for \(i = 1,2\) such that the following diagram commutes:
\ew
\bcd
\lt. U^{(1)}_1(j) \m\osr S_1(j) \rt. \ar[rr, " \Ph "] \ar[rd, " \fr{f}_{1,j} "'] & & \lt. U^{(1)}_2(j) \m\osr S_2(j) \rt. \ar[dl, " \fr{f}_{2,j} "]\\
& S(j) &
\ecd
\eew

In this simplified case, the main theorem of this paper can be stated as follows:

\begin{Thm}[Simplified Version]\label{Cvgce-Thm-1}
Suppose that there exists a smooth map \(\pi: E_2 \to B\) which is a weak submersion (i.e.\ it can be written in local coordinates as the canonical projection \(\bb{R}^{\dim(E_2)} \to \bb{R}^{\dim(B)}\) modulo the action of orbifold groups; see \sref{submersions} for a full definition) and let \(g^\0\) be a \rsm\ on \(E_2\) which vanishes along the distribution \(\ker\dd\pi\) and is positive-definite transverse to \(\ker\dd\pi\).  Write \(d^\0\) for the semi-metric on \(E_2\) induced by \(g^\0\).  Suppose moreover that the following four conditions are satisfied:

(i) \ For all \(r \in (0,1]\), the \rmm s \(g^\mu\) converge uniformly to \(g^\0\) on the region \(E_1 \osr U^{(r)}_1\) and there exist constants \(\La_\mu(r) \ge 0\) such that:
\ew
\lim_{\mu \to \0} \La_\mu(r) = 1 \et \h{g}^\mu - \La_\mu(r)^2 \h{g}^\0 \text{ is non-negative definite on } E^{(r)}, \text{ for all } \mu \in [1,\0);
\eew

(ii)
\ew
\limsup_{r \to 0} ~ \limsup_{\mu \to \0} ~ \sup_{p \in S} ~ \diam_{d^\mu}\lt[ \fr{f}_1^{-1}(\{p\}) \cap U^{(r)}_1 \rt] = 0;
\eew

(iii)
\ew
\limsup_{r \to 0} ~ \sup_{p \in S} ~ \diam_{d^\0}\lt[ \fr{f}_2^{-1}(\{p\}) \cap U^{(r)}_2 \rt] = 0;
\eew

(iv)
\ew
\limsup_{r \to 0} ~ \limsup_{\mu \to \0} ~ \sup_{\lt.\overline{U^{(r)}_1} \m\osr U^{(r)}_1 \rt.} \lt| d^\mu - d^\0 \rt| = 0.
\eew

Then \(\lt(E_1,d^\mu\rt)\) converges to \(\lt(B,d\rt)\) in the \GH\ sense as \(\mu \to \0\) for some metric \(d\).  Moreover, if \(g^\0\) is the pullback of a \rmm\ \(h\) on \(B\) along \(\pi\), then \(d\) is simply the intrinsic metric induced by \(h\).  In general, the length structure on piecewise-\(C^1\) paths in \(\B\) which induces \(d\) is described by \eref{mcL}.  
\end{Thm}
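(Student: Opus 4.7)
The plan is to first define the limiting metric \(d\) on \(B\) directly from \(g^\0\) and then construct explicit \GH\ approximations \(\ph_\mu : E_1 \to B\) whose distortion is controlled by conditions (i)--(iv).

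For the metric, because \(g^\0\) vanishes along \(\ker\dd\pi\) and is positive-definite transverse to it, and \(\pi\) is a weak submersion (so locally the canonical projection modulo orbifold groups), \(g^\0\) descends to a transverse quadratic form on each \(T_pB\), i.e.\ a \qfs\ in the paper's terminology; the associated intrinsic metric is the required \(d\), which agrees with the intrinsic metric of \(h\) when \(g^\0 = \pi^* h\) and with \eref{mcL} in general, and equivalently the semi-metric \(d^\0\) on \(E_2\) collapses precisely the \(\pi\)-fibres and descends to \(d\).  Next, for each \(\mu\) I would choose \(r = r(\mu) \to 0\) slowly enough and set \(\ph_\mu = \pi \circ \Ph\) on \(E_1 \osr U^{(r)}_1\) (using the identification \(\Ph : E_1 \osr S_1 \to E_2 \osr S_2\)), extending to \(U^{(r)}_1\) by factoring through the common base \(S\) via \(\fr{f}_1\) composed with \(\pi\) and a (measurable) section of \(\fr{f}_2\).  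The commutative diagram forces the two definitions to agree up to an \(o(1)\) error near \(\del U^{(r)}_1\), and approximate surjectivity of \(\ph_\mu\) onto \(B\) follows from surjectivity of \(\pi\) (locally surjective, continuous, compact source) together with \(r(\mu) \to 0\).

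The crux is the bound \(|d^\mu(x,y) - d(\ph_\mu(x),\ph_\mu(y))| \to 0\), uniform in \(x, y\), which splits as usual into two inequalities.  For the upper bound, I split a near-minimising \(g^\mu\)-length path in \(E_1\) along \(\del U^{(r)}_1\): on exterior arcs, uniform convergence \(g^\mu \to g^\0\) from (i) makes the \(g^\mu\)-length approximate the \(g^\0\)-length, whose \(\pi\)-image has no greater \(d\)-length (since \(d\) is induced by \(g^\0\)), while each interior arc with entry/exit \(p, q \in \del U^{(r)}_1\) is bypassed using the chain \(d(\ph_\mu(p), \ph_\mu(q)) \le d^\0(\Ph(p), \Ph(q)) \le d^\mu(p,q) + o(1)\) coming from (iv), together with (ii) to control the overall contribution from bypasses.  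For the lower bound, I lift a near-minimising \(d\)-path piecewise to \(E_2\) via the weak-submersion local form, transport to \(E_1\) via \(\Ph\) on the exterior, and stitch across \(\del U^{(r)}_1\) using short fibrewise segments within \(\fr{f}_1\)- and \(\fr{f}_2\)-fibres whose diameters are controlled by (ii)--(iii); the inequality \(\h{g}^\mu \ge \La_\mu(r)^2 \h{g}^\0\) from (i) then bounds the \(g^\mu\)-length of the resulting path above by roughly \(\La_\mu(r)^{-1}\) times the \(d\)-length, giving the desired inequality since \(\La_\mu(r) \to 1\).

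The main obstacle will be choosing the rate \(r(\mu) \to 0\) so that (i)--(iv) are all simultaneously effective: slow enough that the uniform convergence and \(\La_\mu(r) \approx 1\) from (i) have kicked in on \(E_1 \osr U^{(r)}_1\), yet fast enough that the fibrewise diameters in (ii)--(iii) and the boundary deviation in (iv) are \(o(1)\).  Since (ii) and (iv) involve nested \(\limsup\)s over \(r \to 0\) and \(\mu \to \0\), a diagonal argument is necessary; and the lift-and-stitch construction in the lower bound accumulates errors from both \(\fr{f}_1\)- and \(\fr{f}_2\)-fibrewise diameters simultaneously, each of which must be tracked separately against its own hypothesis.
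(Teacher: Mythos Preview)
Your overall architecture is reasonable, but the role you assign to the inequality \(\h{g}^\mu \ge \La_\mu(r)^2 \h{g}^\0\) is inverted, and this is a genuine gap.  That inequality gives \(\ell_{\h{g}^\0}(\ga) \le \La_\mu(r)^{-1}\ell_{\h{g}^\mu}(\ga)\) for every path \(\ga\): it bounds \(g^\0\)-length \emph{above} in terms of \(g^\mu\)-length, equivalently \(g^\mu\)-length \emph{below}.  In your ``lower bound'' step you invoke it to bound the \(g^\mu\)-length of a lifted path \emph{above}, which it cannot do.  What actually works there is uniform convergence applied to a \emph{fixed} lifted path (independent of \(\mu\)); conversely, the \(\La_\mu\) inequality is exactly what is needed in your ``upper bound'' step, where the near-\(d^\mu\)-minimiser varies with \(\mu\) and uniform convergence alone gives no uniform control on its \(g^\0\)-length.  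The torus counterexample in the paper (end of \sref{MT-section}) is precisely a case where uniform convergence holds but the \(\La_\mu\) bound fails, and the conclusion is false --- so your upper-bound argument as written would go through in that example and prove a false statement.

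Structurally, the paper proceeds rather differently from your plan.  Rather than mapping directly \(E_1 \to B\) and choosing a diagonal scale \(r(\mu)\), it factors through the semi-metric space \((E_2,d^\0)\): first it shows \(\fr{D}\lt[(E_1,d^\mu) \to (E_2,d^\0)\rt] \to 0\), then separately identifies the free metric space on \((E_2,d^\0)\) with \((B,d^{\h{\mc{L}}})\).  For the first part it keeps \(r\) fixed, proves \(d^{\mu,r} \to d^{\0,r}\) uniformly on \(E^{(r)}\), and relates the extrinsic \(d^\mu\) to the intrinsic \(d^{\mu,r}\) via a combinatorial decomposition (each \(U^{(r)}_1(j)\) need be visited at most once, so boundary errors from (iv) are multiplied by at most \(N\), not an unbounded number of crossings --- a point your sketch leaves implicit).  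Only then does it let \(r \to 0\), avoiding any diagonal choice.  For the second part, the claim that \(d^\0\) descends to \(d^{\h{\mc{L}}}\) on \(B\) is itself non-trivial: one must lift an arbitrary piecewise-\(C^1\) path in \(B\) to \(E_2\) so that its \(\h{g}^\0\)-length approximates its \(\h{\mc{L}}\)-length, and since the optimal fibre-point varies measurably along the path this requires a Vitali-type covering argument followed by dominated convergence, not merely the local weak-submersion form.
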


As a further special case, one can consider \(S_1 = S_2 = \es\), yielding the following result, which is of independent interest:

\begin{Thm}[Second Simplified Version]\label{Cvgce-Thm-2}
Let \(E\) be a compact orbifold and suppose that there exists a weak submersion \(\pi: E \to B\).  Let \(g^\mu\) be a family of \rmm s on \(E\) and let \(g^\0\) be a \rsm\ on \(E\) which vanishes along the distribution \(\ker\dd\pi\) and is positive-definite transverse to \(\ker\dd\pi\).  Write \(d^\0\) for the semi-metric on \(E\) induced by \(g^\0\).  If the \rmm s \(g^\mu\) converge uniformly to \(g^\0\) and there exist constants \(\La_\mu \ge 0\) such that:
\ew
\lim_{\mu \to \0} \La_\mu = 1, \text{ and } \h{g}^\mu - \La_\mu^2 \h{g}^\0 \text{ is non-negative definite on } E, \text{ for all } \mu \in [1,\0),
\eew
then \(\lt(E,d^\mu\rt)\) converges to \(\lt(B,d\rt)\) in the \GH\ sense as \(\mu \to \0\) for some metric \(d\).  Moreover, if \(g^0\) is the pullback of a \rmm\ \(h\) on \(B\) along \(\pi\), then \(d\) is simply the intrinsic metric induced by \(h\).  In general, the length structure on piecewise-\(C^1\) paths in \(\B\) which induces \(d\) is described by \eref{mcL}.  
\end{Thm}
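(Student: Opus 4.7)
My first observation is that Theorem \ref{Cvgce-Thm-2} is precisely the degenerate case $S_1 = S_2 = \es$, $E_1 = E_2 = E$ of Theorem \ref{Cvgce-Thm-1}: taking each $U^{(r)}_i = \es$ renders conditions (ii)--(iv) vacuous, whilst condition (i) on $E^{(r)} = E$ collapses exactly to the uniform convergence $g^\mu \to g^\0$ and the non-negative-definiteness of $\h{g}^\mu - \La_\mu^2 \h{g}^\0$ assumed here.  I therefore expect the paper to deduce Theorem \ref{Cvgce-Thm-2} as an immediate corollary of Theorem \ref{Cvgce-Thm-1}.  To outline the key ideas of a direct proof, I would begin by constructing the candidate limit metric $d$ on $B$ by descending $d^\0$ through $\pi$: the vanishing of $g^\0$ on $\ker\dd\pi$ makes it routine that $d^\0(x,y) = 0$ whenever $\pi(x) = \pi(y)$, whilst the converse $d^\0(x,y) > 0$ when $\pi(x) \neq \pi(y)$ follows by using the local normal form of a weak submersion (see \sref{submersions}) together with the positive-definiteness of $g^\0$ transverse to $\ker\dd\pi$ to bound $d^\0$ from below by distance on the base over a small neighbourhood of any point.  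Thus $d^\0$ factors as $d \circ (\pi \x \pi)$ for a genuine metric $d$ on $B$.  When $g^\0 = \pi^*h$, this $d$ visibly coincides with the intrinsic metric of $h$; in general, the length structure on piecewise-$C^1$ paths in $B$ which induces $d$ is given by \eref{mcL}.

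For the lower bound on $d^\mu$, note that the non-negative-definiteness of $\h{g}^\mu - \La_\mu^2 \h{g}^\0$ forces $L_{g^\mu}(\ga) \ge \La_\mu L_{g^\0}(\ga)$ for every piecewise-$C^1$ path $\ga$ in $E$, whence $d^\mu(x,y) \ge \La_\mu d^\0(x,y) = \La_\mu d(\pi(x),\pi(y))$ on all of $E \x E$.  Since $\La_\mu \to 1$, this already yields one side of the required \GH\ comparison.

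For the matching upper bound, given $x, y \in E$ and $\ep > 0$, I would select a piecewise-$C^1$ path $\ga$ in $B$ from $\pi(x)$ to $\pi(y)$ whose $d$-length is within $\ep$ of $d(\pi(x),\pi(y))$ and, using the weak submersion structure, lift it to a path $\tld\ga$ in $E$ with $\tld\ga(0) = x$ and $L_{g^\0}(\tld\ga) \le L(\ga) + \ep$.  Uniform convergence $g^\mu \to g^\0$ then makes $L_{g^\mu}(\tld\ga)$ arbitrarily close to $L_{g^\0}(\tld\ga)$ once $\mu$ is large.  The endpoint $\tld\ga(1)$ lies in the compact fibre $\pi^{-1}(\pi(y))$; and because $g^\mu \to 0$ uniformly on unit vertical vectors, the $g^\mu$-diameter of every fibre tends to zero uniformly in the base point, so one can close up to $y$ at negligible additional cost.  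Combining these estimates yields $d^\mu(x,y) \le d(\pi(x),\pi(y)) + o_\mu(1)$ uniformly in $(x,y)$, from which \GH\ convergence of $(E, d^\mu)$ to $(B, d)$ with $\pi$ as a vanishing-error approximation follows at once.  The main obstacle, and the step I expect to demand the most care, is producing lifts $\tld\ga$ whose $g^\0$-length is comparable to $L(\ga)$ in the orbifold setting in which $\pi$ is only a weak submersion; this is precisely why the paper develops the weak submersion framework and the length structure \eref{mcL}, after which the remainder of the argument reduces to standard uniform convergence estimates and compactness of fibres.
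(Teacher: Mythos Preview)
Your observation that \tref{Cvgce-Thm-2} is the $S_1 = S_2 = \es$ special case of \tref{Cvgce-Thm-1} is exactly how the paper treats it; no separate proof is given. Your direct outline departs somewhat from the paper's actual argument for the general result: the paper factors the proof as (a) identifying $(B, d^{\h{\mc{L}}})$ with the free metric space on $(E, d^\0)$ via \pref{FMS-Thm} (using the lifting \lrefs{lifting-lem} and \ref{DCT-seq-constr}), and (b) showing that $d^\mu \to d^\0$ uniformly on $E$ (which in this special case is just \lref{smooth-conv}), then combining via \pref{2stage}. You instead bound $|d^\mu(x,y) - d(\pi(x),\pi(y))|$ directly, with the same lower bound via $\La_\mu$ but a different upper bound that lifts a near-optimal path from $B$ and closes up along the fibre.

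There is, however, a gap in your upper bound: you assert $d^\mu(x,y) \le d(\pi(x),\pi(y)) + o_\mu(1)$ \emph{uniformly} in $(x,y)$, but uniform convergence $g^\mu \to g^\0$ only yields $|L_{g^\mu}(\tld\ga) - L_{g^\0}(\tld\ga)| \to 0$ with a rate governed by the reference-metric length of $\tld\ga$, and your construction does not bound that uniformly over $(x,y)$ --- a lift with small $g^\0$-length can have arbitrarily large length in a genuine Riemannian metric, since $g^\0$ degenerates along $\ker\dd\pi$. The paper's route bypasses this: it first proves pointwise convergence $d^\mu \to d^\0$ on $E$, then notes that $\h{g}^\mu \le C\h{h}$ for some fixed reference $\h{h}$ and all large $\mu$, so the family $\{d^\mu\}$ is uniformly Lipschitz on $E \times E$ and hence equicontinuous, and an Arzel\`a--Ascoli argument upgrades pointwise to uniform convergence. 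Your argument can be repaired either by inserting this equicontinuity step, or by choosing your lifts to be horizontal along a fixed complement to $\ker\dd\pi$ (which does give uniform reference-length control on a compact $E$); but the uniformity is not automatic from what you wrote.
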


\trefs{Cvgce-Thm-1} and \ref{Cvgce-Thm-2} (and their generalisation \tref{Cvgce-Thm-1-FV}) differ from similar results in the literature (e.g.\ the main theorem in \cite{SCafSS}) in that they do not assume bounds on the curvature or the injectivity radius of \(\lt(E_1,g^\mu\rt)\); in particular, they allow for the possibility that the orbifolds \(\lt(E_1,g^\mu\rt)\) `collapse' as \(\mu \to \0\), meaning that \(\dim B < \dim E_2\).  This capability will be exploited in an upcoming paper by the author \cite{URftHFoG23F}, where these theorems will be used to prove that two closed 7-manifolds equipped with families of closed \g-structures converge to orbifolds of strictly lower dimension.

The notion of a weak submersion is, to the author's knowledge, new and has not previously been studied in the literature.  Informally, weak submersions are `stratified fibrations', which are locally trivial over points in the base space with isomorphic orbifold groups but which may have non-homotopic fibres over points with distinct orbifold groups.  Likewise, the notion of a \sqfs\ is new and has not previously appeared in the literature.  Informally, it consists of a family of quasinorms on the tangent cones to the orbifold \(B\) which vary continuously between points with isomorphic orbifold groups, but which can be (boundedly) discontinuous between points with different orbifold groups (see \sref{Strat-data} for a precise definition).  In a similar vein, one can allow \(g^\mu\) and \(g^\0\) to be \srm s and \srsm s, respectively, and it is this `fully stratified' result which is stated in \tref{Cvgce-Thm-1-FV} and which will be considered throughout the rest of this paper.  The possibility of allowing \(g^\mu\) and \(g^\0\) to be stratified is key for applications and will, in particular, be needed in the upcoming paper \cite{URftHFoG23F}.

The results of the present paper were obtained during the author's doctoral studies, which where supported by EPSRC Studentship 2261110.

\section{Preliminaries}

\subsection{Gromov--Hausdorff distance and forwards discrepancy}\label{H&GHD}

I begin by briefly reviewing \GH\ distance; see \cite[\S7.3]{ACiMG} for a more detailed discussion of this topic.  Let \((X,d)\) be a metric space. Given \(Y\cc X\), let \(\mc{N}_\eta(Y) = \lt\{x\in X~\middle|~d(x,Y) < \eta\rt\}\) be the open \(\eta\)-neighbourhood of \(Y\), where \(d(x,Y) = \inf\lt\{d(x,y)~\middle|~y \in Y\rt\}\).  Given \(A, B \cc X\) non-empty, closed and bounded, define the Hausdorff distance between \(A\) and \(B\) to be:
\ew
d_\mc{H}(A,B) = \inf\lt\{\eta > 0 ~\middle|~A \cc \mc{N}_\eta(B) \text{ and } B \cc \mc{N}_\eta(A)\rt\}.
\eew
Now let \((Y,d_Y)\) and \((Y',d_{Y'})\) be compact metric spaces. The Gromov--Hausdorff distance between \((Y,d_Y)\) and \((Y',d_{Y'})\) is defined to be:
\ew
d_\mc{GH}\lt[(Y,d_Y),(Y',d_{Y'})\rt] = \inf\bigg\{\eta>0~\bigg|~\parbox{8.5cm}{\begin{center}there exists \((X,d)\) together with isometric embeddings \(\io:Y\emb X, \io':Y'\emb X\) such that \(d_\mc{H}\lt[\io(Y),\io'(Y')\rt] \le \eta\)\end{center}}\bigg\}.
\eew

It can be shown that \(d_\mc{GH}\lt[(Y,d_Y),(Y',d_{Y'})\rt] = 0\) \iff\ \((Y,d_Y)\) and \((Y',\dd_{Y'})\) are isometric, and thus \(d_\mc{GH}\) defines a metric on the collection of isometry classes of compact metric spaces.  In light of this, one says that a family \((Y_i,d_i)_{i \in \bb{N}}\) of compact metric spaces converges to a compact metric space \((Y,d)\) in the Gromov--Hausdorff sense as \(i \to \0\) if \(d_\mc{GH}\lt[(Y_i,d_i),(Y,d)\rt] \to 0\).

\GH\ distance is closely related to the notion of \(\ep\)-isometry.  Recall the definition that for metric spaces \((X,d)\), \((X',d')\) and \(\ep>0\), an \(\ep\)-isometry is a set-theoretic function \(f:X \to X'\) (which need not be continuous) satisfying:
\begin{itemize}
\item For all \(x' \in X'\), there exists \(x \in X\) such that \(d'(f(x),x') \le \ep\) (\(f(X)\) is an `\(\ep\)-net' in \(X'\));
\item For all \(x,y \in X\): \(|d'(f(x),f(y)) - d(x,y)| \le \ep\).
\end{itemize}
The link between \GH\ distance and \(\ep\)-isometries can be quantified as follows:
\begin{Prop}\label{GH&FD}
Let \((X,d)\) and \((X',d')\) be compact metric spaces.   Then:
\ew
d_\mc{GH}\lt[(X,d),(X',d')\rt] \le 2\inf\lt\{\ep > 0 ~\m|~\text{There exists an \(\ep\)-isometry \(f:(X,d) \to (X',d')\)}\rt\}.
\eew
\end{Prop}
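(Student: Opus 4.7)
The plan is to construct, starting from any \(\ep\)-isometry \(f:(X,d) \to (X',d')\), an explicit pseudometric \(d_Z\) on the disjoint union \(Z := X \sqcup X'\) which extends both \(d\) and \(d'\), and in which the natural copies of \(X\) and \(X'\) sit at Hausdorff distance at most \(\tfrac{3\ep}{2}\).  Quotienting \(Z\) by the equivalence relation identifying points at \(d_Z\)-distance \(0\) then yields a compact metric space \((\widetilde{Z},\widetilde{d})\) equipped with isometric embeddings \(\io:X \emb \widetilde{Z}\) and \(\io':X' \emb \widetilde{Z}\) witnessing \(d_\mc{GH}\lt[(X,d),(X',d')\rt] \le \tfrac{3\ep}{2} \le 2\ep\); infimising over all \(\ep\)-isometries then gives the claim.

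For the explicit construction, I would set \(d_Z\big|_X = d\), \(d_Z\big|_{X'} = d'\), and for \(x \in X\), \(x' \in X'\) define
\ew
d_Z(x,x') := \inf_{y \in X}\lt[d(x,y) + d'(f(y),x')\rt] + \tfrac{\ep}{2},
\eew
extended symmetrically to pairs in \(X' \x X\).  Symmetry and non-negativity are immediate.  The substantive work is the triangle inequality; by the three-point structure of \(Z\), only two mixed cases require serious checking, namely that for all \(x_1,x_2 \in X\) and \(x' \in X'\) one has \(d(x_1,x_2) \le d_Z(x_1,x') + d_Z(x',x_2)\), and that for all \(x'_1,x'_2 \in X'\) and \(x \in X\) one has \(d'(x'_1,x'_2) \le d_Z(x,x'_1) + d_Z(x,x'_2)\).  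In each case, unwinding the infima and applying the \(\ep\)-isometry estimate \(|d'(f(a),f(b)) - d(a,b)| \le \ep\) leaves a slack of exactly \(\ep\), which is precisely absorbed by the two additive \(\tfrac{\ep}{2}\) summands; the remaining mixed triangle inequalities reduce directly to those for \(d\) and \(d'\).

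For the Hausdorff estimate, for any \(x \in X\) taking \(y = x\) in the infimum yields \(d_Z(x,f(x)) \le \tfrac{\ep}{2}\), so \(\io(X) \cc \mc{N}_{\ep/2}(\io'(X'))\) inside \(\widetilde{Z}\).  Conversely, for any \(x' \in X'\) the \(\ep\)-net condition on \(f\) provides some \(x \in X\) with \(d'(f(x),x') \le \ep\); taking \(y = x\) in the infimum then gives \(d_Z(x,x') \le \tfrac{3\ep}{2}\), so \(\io'(X') \cc \mc{N}_{3\ep/2}(\io(X))\).  Combining these inclusions gives the desired Hausdorff bound.

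The main obstacle is the careful bookkeeping in the triangle inequality: the additive \(\tfrac{\ep}{2}\) terms must be large enough to absorb the \(\ep\) slack from the \(\ep\)-isometry condition but not so large as to spoil the Hausdorff bound.  Any smaller additive constant would fail the triangle inequality across the two components, while any larger constant would weaken the final estimate.  Beyond this delicate calibration, the proof is routine.
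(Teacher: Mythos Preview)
Your construction is correct and is essentially the standard argument (cf.\ Burago--Burago--Ivanov, \S7.3, which the paper cites as its reference for this material).  Note, however, that the paper does not actually prove this proposition: it is stated without proof as a known result and then immediately used to motivate the definition of forwards discrepancy.  So there is no ``paper's own proof'' to compare against here.

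One minor remark: your argument in fact yields the sharper bound \(d_\mc{GH} \le \tfrac{3\ep}{2}\), and you then weaken this to \(2\ep\) only to match the stated inequality.  That is fine for the purposes of the proposition, but you might as well note the stronger conclusion since you have already done the work.  The triangle-inequality verifications you outline are correct; in particular, the two ``substantive'' mixed cases do absorb the \(\ep\) slack exactly as you describe, and the remaining mixed cases (one point in \(X\), two in \(X'\), etc.) follow directly from the triangle inequality for \(d'\) together with the definition of the infimum, without needing the \(\ep\)-isometry estimate at all.
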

Motivated by this result, I make the following definition:
\begin{Defn}
Let \((X,d)\) and \((X',d')\) be compact metric spaces.   The forwards discrepancy between \((X,d)\) and \((X',d')\), denoted \(\fr{D}[(X,d)\to(X',d')]\), is defined to be:
\ew
\fr{D}[(X,d)\to(X',d')] = \inf\lt\{\ep > 0 ~\m|~\text{There exists an \(\ep\)-isometry \(f:(X,d) \to (X',d')\)}\rt\}.
\eew
(Note that the infimum is finite since, choosing \(\ep = \max\lt[\diam(X,d),(X',d')\rt]\), any map \(f:X \to X'\) is an \(\ep\)-isometry.)
\end{Defn}
The definition of forwards discrepancy naturally extends to semi-metric spaces.  Recall that a semi-metric \(d\) satisfies all the usual conditions of a metric, except that distinct points \(x\) and \(y\) are permitted to satisfy \(d(x,y) = 0\).  Given a semi-metric space \((X,d)\), define an equivalence relation \(\tl_d\) on \(X\) via \(x \tl_d y\) \iff\ \(d(x,y) = 0\).  Then \(d\) descends to define a metric \(d_{\tl}\) on the quotient \(\rqt{X}{\tl_d}\), and I term \(\lt(\rqt{X}{\tl_d},d_{\tl}\rt)\) the free metric space on \((X,d)\) (the name deriving from the fact that the assignment \((X,d) \mt \lt(\rqt{X}{\tl_d},d_{\tl}\rt)\) is left-adjoint to the natural inclusion functor of the category {\bf Met} of metric spaces and non-expansive maps into the category {\bf SMet} of semi-metric spaces and non-expansive maps).  I say that a semi-metric space is compact if its corresponding free metric space is compact in the usual sense.  It is clear that the notion of forwards discrepancy is well-defined not just on the class of compact metric spaces, but also on the class of compact semi-metric spaces.

Next, I prove the key result concerning \GH\ distance and forwards discrepancy which shall be required in this paper:
\begin{Prop}\label{2stage}
Let \((X,d)\) be a compact metric space and let \((X',d')\) be a compact semi-metric space.   Then:
\ew
d_\mc{GH}\lt[(X,d),\lt(\rqt{X'}{\tl_{d'}},d'_{\tl}\rt)\rt] \le 2\fr{D}[(X,d) \to (X',d')].
\eew
In particular, given a family of compact metric spaces \((X^\mu,d^\mu)_{\mu \in [1,\0)}\) such that \(2\fr{D}[(X^\mu,d^\mu) \to (X',d')] \to 0\) as \(\mu \to \0\), the spaces \((X^\mu,d^\mu)\) converge in the \GH\ sense to \(\lt(\rqt{X'}{\tl_{d'}},d'_{\tl}\rt)\) as \(\mu \to \0\).
\end{Prop}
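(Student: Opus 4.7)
The plan is to use the quotient map $q: X' \to \rqt{X'}{\tl_{d'}}$ to push forward $\ep$-isometries from $(X',d')$ to its free metric space, and then invoke \pref{GH&FD}.

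First, I would record the key elementary fact about the free metric space: by the definition of $\tl_{d'}$, the semi-metric $d'$ descends to $\rqt{X'}{\tl_{d'}}$ in such a way that $d'_{\tl}(q(a),q(b)) = d'(a,b)$ for all $a,b \in X'$. In particular, $q$ is distance-preserving (in the semi-metric sense), surjective, and the quotient space is a genuine compact metric space by hypothesis.

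Next, given any $\ep > \fr{D}[(X,d) \to (X',d')]$ and any associated $\ep$-isometry $f: (X,d) \to (X',d')$, I would verify that the composition $q \circ f: (X,d) \to \lt(\rqt{X'}{\tl_{d'}}, d'_{\tl}\rt)$ is also an $\ep$-isometry. The almost-distance-preserving property is immediate from the identity $d'_{\tl}(q(f(x)), q(f(y))) = d'(f(x),f(y))$ applied to the corresponding estimate for $f$. The $\ep$-net property for $q \circ f$ follows from the surjectivity of $q$: given $[x'] \in \rqt{X'}{\tl_{d'}}$, pick a representative $x' \in X'$, use the $\ep$-net property of $f$ to produce $x \in X$ with $d'(f(x),x') \le \ep$, and conclude $d'_{\tl}(q(f(x)),[x']) \le \ep$. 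Taking the infimum over $\ep$ yields
\ew
\fr{D}\lt[(X,d) \to \lt(\rqt{X'}{\tl_{d'}}, d'_{\tl}\rt)\rt] \le \fr{D}[(X,d) \to (X',d')].
\eew

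Finally, I would apply \pref{GH&FD} to the pair of compact metric spaces $(X,d)$ and $\lt(\rqt{X'}{\tl_{d'}}, d'_{\tl}\rt)$, which gives the factor of $2$ and completes the proof of the main inequality. The ``in particular'' statement is then an immediate consequence of the definition of Gromov--Hausdorff convergence applied to the family $\mu \mt (X^\mu,d^\mu)$.

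I do not anticipate any serious obstacle: the argument is essentially a bookkeeping exercise, with the only subtle point being the verification that the $\ep$-net property is preserved under the quotient, which rests on the surjectivity of $q$ and the isometric nature of the descent of $d'$ to $d'_{\tl}$.
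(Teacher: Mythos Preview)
Your proof is correct and essentially the same as the paper's: both compose an \(\ep\)-isometry \(f\) with the quotient map \(q\) and apply \pref{GH&FD}. The only cosmetic difference is that the paper packages the composition step as two separate observations --- a weak triangle inequality \(\fr{D}[(X,d) \to (X'',d'')] \le \fr{D}[(X,d) \to (X',d')] + 2\fr{D}[(X',d') \to (X'',d'')]\) and the fact that \(\fr{D}\lt[(X',d') \to \lt(\rqt{X'}{\tl_{d'}},d'_{\tl}\rt)\rt] = 0\) --- whereas you verify the special case directly.
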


\begin{proof}
Firstly note that whilst forwards discrepancy does not satisfy the triangle inequality, it does satisfy the weaker inequality:
\ew
\fr{D}[(X,d) \to (X'',d'')] \le \fr{D}[(X,d) \to (X',d')] + 2\fr{D}[(X',d') \to (X'',d'')]
\eew
for any compact semi-metric spaces \((X,d)\), \((X',d')\) and \((X'',d'')\): indeed, given \(\ep\)-isometries \(f:X \to X'\) and \(f':X'\to X''\) for some \(\ep,\ep'>0\), one may verify directly that \(f' \circ f:X \to X''\) is an \((\ep + 2\ep')\)-isometry.  Secondly, note that given a compact semi-metric space \((X,d)\), the quotient map \(f:(X,d) \to \lt(\rqt{X}{\tl_d},d_{\tl}\rt)\) is an \(\ep\)-isometry for any \(\ep>0\) and thus \(\fr{D}\lt[(X,d), \lt(\rqt{X}{\tl_d},d_{\tl}\rt)\rt] = 0\).  The result follows by combining these two observations with \pref{GH&FD}.

\end{proof}

\pref{2stage} decouples the task of computing \GH\ limits into two distinct stages: firstly, given a family \((X^\mu,d^\mu)_{\mu\in[1,\0)}\) of metric spaces, one finds a compact semi-metric space \((X^\0,d^\0)\) such that \(\fr{D}[(X^\mu,d^\mu) \to (X^\0,d^\0)] \to 0\) as \(\mu \to \0\).  By applying \pref{2stage}, it follows that \((X^\mu, d^\mu) \to \lt(\rqt{X^\0}{\tl_{d^\0}},d^\0_{\tl}\rt)\) in the \GH\ sense, as \(\mu \to \0\); thus, the task of computing the \GH\ limit of the family \((X^\mu,d^\mu)\) is reduced to describing the free metric space \(\lt(\rqt{X^\0}{\tl_{d^\0}},d^\0_{\tl}\rt)\).

\subsection{Length structures}\label{LS}

For a detailed treatment of length spaces, see \cite[Ch.\ 2]{ACiMG}.  Let \(X\) be a topological space.  A length structure on \(X\) is a class \(\mc{A}\) of continuous paths in \(X\) together with an assignment \(\ell: \mc{A} \to \bb{R} \cup \{\0\}\) satisfying the following four conditions:
\begin{enumerate}

\item If \(\lt(\ga: [a,b] \to X\rt) \in \mc{A}\) and \(c \in [a,b]\), then \(\lt(\ga|_{[a,c]}: [a,c] \to X\rt), \lt(\ga|_{[c,b]}: [c,b] \to X\rt) \in \mc{A}\) and:
\ew
\ell\lt(\ga\rt) = \ell\lt(\ga|_{[a,c]}\rt) + \ell\lt(\ga|_{[c,b]}\rt).
\eew
Moreover, \(\ell\lt(\ga|_{[a,c]}\rt)\) is continuous, when viewed as a function of \(c\).

\item If \(\ga: [a,b] \to X\) is continuous and \(c \in [a,b]\) is such that \(\lt(\ga|_{[a,c]}: [a,c] \to X\rt), \lt(\ga|_{[c,b]}: [c,b] \to X\rt) \in \mc{A}\), then \(\lt(\ga: [a,b] \to X\rt) \in \mc{A}\);

\item If \(\lt(\ga: [a,b] \to X\rt) \in \mc{A}\), and \(\ph: [c,d] \to [a,b]\) is a homeomorphism of the form \(t \mt \al t + \be\) (\(\al \ne 0\)), then \(\lt(\ga \circ \ph: [c,d] \to X\rt) \in \mc{A}\) and \(\ell\lt(\ga \circ \ph\rt) = \ell\lt(\ga\rt)\).

\item For all \(x \in X\) and all open neighbourhoods \(U\) of \(x\):
\ew
\inf \lt\{ \ell(\ga) ~\m|~ \lt(\ga: [a,b] \to X\rt) \in \mc{A} \text{ satisfies } \ga(a) = x \text{ and } \ga(b) \in X \osr U \rt\} > 0.
\eew
\end{enumerate}

Every length structure \((\mc{A},\ell)\) on \(X\) defines a metric \(d_{(\mc{A},\ell)}\) via:
\ew
d_{(\mc{A},\ell)}(x,y) = \inf\lt\{ \ell(\ga) ~\m|~ \lt(\ga: [a,b] \to X\rt) \in \mc{A} \text{ satisfies } \ga(a) = x \text{ and } \ga(b) = y \rt\}.
\eew
Such metrics are termed intrinsic.  In general, the topology on \(X\) induced by an intrinsic metric need only be no coarser than the original topology, in the sense that if \(U \pc X\) is open, then it is open \wrt\ \(d_{(\mc{A},\ell)}\) for any length-structure \((\mc{A},\ell)\).  However, for all the length-structures considered in this paper, the two topologies will in fact coincide.

Similarly, term \((\mc{A},\ell)\) a weak length structure if it satisfies conditions 1--3 above.  In this case, \((\mc{A},\ell)\) naturally induces a semi-metric \(d{(\mc{A},\ell)}\) on \(X\).

\subsection{Differential topology of orbifolds}

The material in this subsection is largely based on \cite[\S\S1.1--1.3]{O&ST} and \cite[\S14.1]{THKLFPFftScDO}.

\subsubsection{Basic definitions}

Let \(E\) be a topological space.  
\begin{Defn}
An \(n\)-dimensional orbifold chart \(\Xi\) is the data of a connected, open neighbourhood \(U\) in \(E\), a finite subgroup \(\Ga \pc \GL(n;\bb{R})\), a connected, \(\Ga\)-invariant open neighbourhood \(\tld{U}\) of \(0 \in \bb{R}^n\) and a homeomorphism \(\ch: \lqt{\tld{U}}{\Ga} \to U\).  Write \(\tld{\ch}\) for the composite \(\tld{U} \oto{quot} \lqt{\tld{U}}{\Ga} \oto{\ch} U\).  Say that \(\Xi\) is centred at \(e \in E\) if \(e = \tld{\ch}(0)\).   In this case, \(\Ga\) is called the orbifold group of \(e\), denoted \(\Ga_e\).  \(e\) is called a smooth point if \(\Ga_e = 0\), and a singular point if \(\Ga_e \ne 0\).

Now consider two orbifold charts \(\Xi_1 = \lt(U_1,\Ga_1,\tld{U}_1,\ch_1\rt)\) and \(\Xi_2 = \lt(U_2,\Ga_2,\tld{U}_2,\ch_2\rt)\) with \(U_1 \cc U_2\).   An embedding of \(\Xi_1\) into \(\Xi_2\) is the data of a smooth, open embedding \(\io_{12}:\tld{U}_1 \emb \tld{U}_2\) and a group isomorphism \(\la_{12}: \Ga_1 \to \Stab_{\Ga_2}(\io_{12}(0))\) such that for all \(x \in \tld{U}_1\) and all \(\si \in \Ga_1\): \(\io_{12}(\si \cdot x) = \la_{12}(\si) \cdot \io_{12}(x)\), and such that the following diagram commutes:
\ew
\bcd[column sep = 13mm, row sep = 3mm]
\tld{U}_1 \ar[rr, " \mla{\io_{12}} "] \ar[dd, " \mla{\tld{\ch}_1} "] & & \tld{U}_2 \ar[dd, " \mla{\tld{\ch}_2} "]\\
& & \\
U_1 \ar[rr, "\mla{incl}", hook] &  & U_2
\ecd
\eew

Now let \(\Xi_1\) and \(\Xi_2\) be arbitrary.  \(\Xi_1\) and \(\Xi_2\) are compatible if for every \(e \in U_1 \cap U_2\), there exists a chart \(\Xi_e = \lt(U_e,\Ga_e,\tld{U}_e,\ch_e\rt)\) centred at \(e\) together with embeddings \((\io_{e1},\la_{e1}): \Xi_e \emb \Xi_1\) and \((\io_{e2},\la_{e2}): \Xi_e \emb \Xi_2\).  If \(U_1 \cap U_2 = \es\), then \(\Xi_1\) and \(\Xi_2\) are automatically compatible, however if \(U_1 \cap U_2 \ne \es\) and \(\Xi_1\) and \(\Xi_2\) are compatible, then \(\Xi_1\) and \(\Xi_2\) have the same dimension; moreover, if \(\Xi_1\) and \(\Xi_2\) are centred at the same point \(e \in E\), then \(\Ga_1 \cong \Ga_2\) and therefore the orbifold group \(\Ga_e\) is well-defined up to isomorphism.

An orbifold atlas for \(E\) is a collection of compatible orbifold charts \(\fr{A}\) which is maximal in the sense that if a chart \(\Xi\) is compatible with every chart in \(\fr{A}\), then \(\Xi \in \fr{A}\).  An orbifold is a connected, Hausdorff, second-countable topological space \(E\) equipped with an orbifold atlas \(\fr{A}\).  Every chart of \(E\) has the same dimension \(n\); term this common dimension the dimension of the orbifold.
\end{Defn}

\begin{Defn}\label{c-inf-map}
Let \(E_1\), \(E_2\) be orbifolds.  A continuous map \(f:E_1 \to E_2\) is termed smooth if for any point \(e \in E_1\), there exists a chart \(\Xi_e = \lt(U_e,\Ga_e,\tld{U}_e,\ch_e\rt)\) for \(E_1\) centred at \(e\), a chart \(\Xi_{f(e)} = \lt(U_{f(e)},\Ga_{f(e)},\tld{U}_{f(e)},\ch_{f(e)}\rt)\) for \(E_2\) centred at \(f(e)\), a group homomorphism \(\ka_f:\Ga_e \to \Ga_{f(e)}\) and a smooth map \(\tld{f}: \tld{U}_e \to \tld{U}_{f(e)}\) satisfying \(\tld{f}(\si \cdot x) = \ka_f(\si) \cdot \tld{f}(x)\) for all \(x \in \tld{U}_e\) and \(\si \in \Ga_e\), such that the following diagram commutes:
\ew
\bcd[column sep = 13mm, row sep = 3mm]
\tld{U}_e \ar[rr, " \mla{\tld{f}} "] \ar[dd, " \mla{\tld{\ch}_e} "] & & \tld{U}_{f(e)} \ar[dd, " \mla{\tld{\ch}_{f(e)}} "]\\
& &\\
U_e \ar[rr, " \mla{f} "] &  & U_{f(e)}
\ecd
\eew
\end{Defn}

The lift \(\tld{f}\) need not be unique, even modulo the action of the groups \(\Ga_e\) and \(\Ga_{f(e)}\); see, e.g. \cite[Example 1.4.3]{ItO}.  Nevertheless, \dref{c-inf-map} is independent of the choice of charts \(\Xi_e\) and \(\Xi_{f(e)}\) and the map \(f\) has a well-defined differential in the following sense: the bottom arrow in the diagram:
\ew
\bcd[column sep = 10mm, row sep = 3mm]
\bb{R}^{n_1} \ar[rr, " \mla{D\tld{f}|_0} "] \ar[dd, " \mla{proj} "] & & \bb{R}^{n_2} \ar[dd, " \mla{proj} "]\\
& &\\
\lqt{\bb{R}^{n_1}}{\Ga_e} \ar[rr] & & \lqt{\bb{R}^{n_2}}{\Ga_{f(e)}}
\ecd
\eew
is independent of the choice of \(\tld{f}\).

\subsubsection{Suborbifolds and stratifications}

\begin{Defn}[\protect{See \cite[Defn.\ 13.2.7]{G&To3M}}]\label{suborb}
Let \(E\) be an orbifold.   A subset \(S \cc E\) is termed a suborbifold if for each \(e \in S\), there exists a chart \(\Xi_e = \lt(U_e,\Ga_e,\tld{U}_e,\ch_e\rt)\) for \(E\) centred at \(e\) and a \(\Ga\)-invariant subspace \(\bb{I}_e \pc \bb{R}^n\) such that:
\ew
\tld{\ch}_e{}^{-1}(S \cap U_e) = \tld{U}_e \cap \bb{I}_e.
\eew
Call such a chart regular for \(S\) and call \(\bb{I}_e\) the regular subspace.  If the action of \(\Ga\) on \(\bb{I}_e\) is trivial for all \(e \in S\), then call \(S\) a submanifold.
\end{Defn}

A subset \(S\) can have at most one suborbifold structure, as the following (readily verified) proposition demonstrates:
\begin{Prop}
Let \(E\) be an orbifold, \(S \cc E\) be a subset, \(e,f \in S\) and let \(\Xi_e = (U_e,\Ga_e,\tld{U}_e,\ch_e)\), \(\Xi_f = (U_f,\Ga_f,\tld{U}_f,\ch_f)\) be regular charts for \(S\) centred at \(e\) and \(f\) respectively.  Then \(\Xi_e\) and \(\Xi_f\) are compatible via regular charts.  Specifically, let \(g \in U_e \cap U_f \cap S\).  Then there exists a regular chart \(\Xi_g\) centred at \(g\) together with embeddings \((\io_{ge},\la_{ge})\) and \((\io_{gf},\la_{gf})\) into \(\Xi_e\) and \(\Xi_f\) respectively.  In particular, suborbifolds inherit a natural orbifold structure.
\end{Prop}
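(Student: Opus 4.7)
The plan is to start by invoking the compatibility of \(\Xi_e\) and \(\Xi_f\) as charts of the ambient orbifold \(E\) to obtain an intermediate chart \(\Xi_g^0 = \lt(V_g, \Ga_g, \tld{V}_g, \ch_g^0\rt)\) centred at \(g\) together with embeddings \(\lt(\io_{ge}^0, \la_{ge}^0\rt):\Xi_g^0 \emb \Xi_e\) and \(\lt(\io_{gf}^0, \la_{gf}^0\rt):\Xi_g^0 \emb \Xi_f\).  This chart need not be regular for \(S\); the strategy is to upgrade it to a regular chart by performing a \(\Ga_g\)-equivariant change of coordinates on \(\tld{V}_g\), leaving the embeddings' targets untouched.

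First I would note that every chart of \(E\) has the same dimension \(n = \dim E\), so \(\io_{ge}^0\) and \(\io_{gf}^0\) are equidimensional smooth embeddings between open subsets of \(\bb{R}^n\), hence diffeomorphisms onto open subsets of \(\tld{U}_e\) and \(\tld{U}_f\), respectively.  Combining the commutativity in \dref{c-inf-map} with the regular-chart identity \(\tld{\ch}_e^{-1}(S \cap U_e) = \tld{U}_e \cap \bb{I}_e\) gives
\ew
M := \lt(\tld{\ch}_g^0\rt)^{-1}(S \cap V_g) = \lt(\io_{ge}^0\rt)^{-1}\lt(\tld{U}_e \cap \bb{I}_e\rt) = \lt(\io_{gf}^0\rt)^{-1}\lt(\tld{U}_f \cap \bb{I}_f\rt),
\eew
which exhibits \(M\) as a smooth submanifold of \(\tld{V}_g\) through \(0\).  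Moreover, \(M\) is \(\Ga_g\)-invariant: for \(\si \in \Ga_g\) and \(x \in M\), the equivariance of \(\io_{ge}^0\) together with the \(\Ga_e\)-invariance of \(\bb{I}_e\) yields \(\io_{ge}^0(\si \1 x) = \la_{ge}^0(\si) \1 \io_{ge}^0(x) \in \bb{I}_e\), so \(\si \1 x \in M\).

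Next I would \(\Ga_g\)-equivariantly straighten \(M\).  Since \(\Ga_g\) is finite, averaging produces a \(\Ga_g\)-invariant inner product on \(\bb{R}^n\), and \(\bb{I}_g := T_0 M\) is a \(\Ga_g\)-invariant subspace (because \(\Ga_g\) acts linearly fixing \(0\) and preserves \(M\)).  Writing \(\bb{R}^n = \bb{I}_g \ds \bb{I}_g^\perp\) orthogonally and splitting coordinates as \((x,y)\), a neighbourhood of \(0\) in \(M\) is the graph of a smooth function \(h\) from an open neighbourhood of \(0\) in \(\bb{I}_g\) to \(\bb{I}_g^\perp\) with \(h(0) = 0\) and \(Dh|_0 = 0\); the \(\Ga_g\)-invariance of \(M\) forces \(h\) to be \(\Ga_g\)-equivariant.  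The map \(\Ph(x,y) := (x, y + h(x))\) is then a \(\Ga_g\)-equivariant local diffeomorphism of \(\lt(\bb{R}^n, 0\rt)\) taking \(\bb{I}_g\) onto \(M\) near \(0\).

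Finally, after shrinking to a \(\Ga_g\)-invariant open neighbourhood \(\tld{V}'_g\) of \(0\) on which \(\Ph\) is a diffeomorphism, I would set \(\Xi_g := \lt(V'_g, \Ga_g, \tld{V}'_g, \ch_g\rt)\), where \(\tld{\ch}_g := \tld{\ch}_g^0 \circ \Ph\) and \(V'_g\) is its image.  Since \(\Ph^{-1}(M) = \bb{I}_g \cap \tld{V}'_g\), the chart \(\Xi_g\) is regular for \(S\) with regular subspace \(\bb{I}_g\), while the pairs \(\lt(\io_{ge}^0 \circ \Ph, \la_{ge}^0\rt)\) and \(\lt(\io_{gf}^0 \circ \Ph, \la_{gf}^0\rt)\) provide the required embeddings of \(\Xi_g\) into \(\Xi_e\) and \(\Xi_f\).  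I expect the main obstacle to be the equivariant straightening step, but this reduces to the classical graph construction above once the finiteness of \(\Ga_g\) is used to produce an invariant complement \(\bb{I}_g^\perp\); the last assertion that \(S\) inherits a natural orbifold structure then follows by restricting each regular chart to \(\tld{U}_e \cap \bb{I}_e\), equipped with the induced action of (the image of) \(\Ga_e\) on \(\bb{I}_e\).
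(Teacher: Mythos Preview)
The paper does not actually prove this proposition: it is introduced as ``readily verified'' and no argument is given. Your proposal supplies exactly the kind of verification the author has in mind --- pull back through an ambient compatibility chart \(\Xi_g^0\), observe that the preimage of \(S\) there is a \(\Ga_g\)-invariant smooth submanifold through \(0\), and then perform a \(\Ga_g\)-equivariant straightening to obtain a regular chart --- and the details you give (in particular the equivariance of the graph map \(\Ph\) via \(Dh|_0 = 0\) and an invariant complement, and the fact that \(\Ph(0)=0\) so the stabiliser data \(\la_{ge}^0\), \(\la_{gf}^0\) are unchanged) are correct. Your closing remark that the induced orbifold group on \(S\) is the \emph{image} of \(\Ga_e\) in \(\GL(\bb{I}_e)\) is the right refinement, since the action of \(\Ga_e\) on \(\bb{I}_e\) need not be faithful.
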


Using this terminology, one can make the following generalisation of Mather's terminology \cite[\S5]{NoTS} to orbifolds:

\begin{Defn}
Let \(E\) be an orbifold.   A stratification \(\Si\) of \(E\) is a partition of \(E\) into disjoint submanifolds \(E = \bigcup_{i=0}^n E_i\).  Say that \(\Si\) satisfies the condition of the frontier if, in addition, for each \(i \in \{0,...,n\}\), there exists \(I(i) \cc \{0,...,n\}\) such that:
\ew
\overline{E_i} = \bigcup_{j \in I(i)} E_j,
\eew
where \(\overline{E_i}\) denotes the topological closure of \(E_i\) in \(E\).

Now let \(\Si = \lt\{E_i\rt\}\), \(\Si' = \lt\{E'_j\rt\}\) be two stratifications of \(E\).   Say that \(\Si'\) is a refinement of \(\Si\) if for every \(j\), there exists an \(i\) such that \(E'_j \pc E_i\).  Finally, given stratified orbifolds \(\lt(E_1,\Si_1 = \lt\{E_{1,i}\rt\}_{i=1}^n\rt)\) and \(\lt(E_2,\Si_2= \lt\{E_{2,i}\rt\}_{i=1}^n)\rt)\), a smooth map \(f:E_1 \to E_2\) is a stratified diffeomorphism if it is an orbifold diffeomorphism (i.e.\ it has a smooth inverse) and \(f(E_{1,i}) = E_{2,i}\) for each \(i\) (in particular, \(\Si_1\) and \(\Si_2\) have the same number of strata).
\end{Defn}

\begin{Rk}
A stratification as defined above is not the same as a Whitney stratification, the latter being a strictly stronger notion; see \cite[\S5]{NoTS} for further details.  Indeed, the strong notion of a Whitney stratification will not be required for the purposes of this paper.
\end{Rk}

A key source of stratifications is provided by the following definition:

\begin{Defn}
Let \(E\) be an orbifold.  For each isomorphism class of finite groups \([\Ga]\), the set:
\ew
E(\Ga) = \lt\{e \in E ~\m|~ \Ga_e \in [\Ga]\rt\}
\eew
is either empty or its connected components are submanifolds of \(E\), with \(E([\bf{1}])\) being always an open and dense subset of \(E\).  Take \(E_i\) to be an enumeration of the connected components of the \(E([\Ga])\) as \([\Ga]\) varies.  Then one can show that (at least when \(E\) is compact) there are only a finite number of strata \(E_i\) and that \(\Si_{can} = \lt\{E_i\rt\}\) defines a stratification of \(E\) known as the canonical stratification of \(E\).  One may verify that this stratification satisfies the condition of the frontier.
\end{Defn}

\begin{Ex}
Consider:
\ew
\Ga = \lt\{ \bpm 1 & 0\\ 0 & 1 \epm, \bpm -1 & 0\\ 0 & 1 \epm, \bpm 1 & 0\\ 0 & -1 \epm, \bpm -1 & 0\\ 0 & -1 \epm \rt\} \pc \GL(2;\bb{R}).
\eew
The quotient \(E = \lqt{\bb{R}^2}{\Ga}\) is a 2-dimensional orbifold.  The canonical stratification of \(E\) is given by:
\begin{center}
\begin{tikzpicture}[scale = 1.4]
\filldraw[yellow!70] (0,0) rectangle (3,1.5);
\draw[black!100, very thick] (0,0) -- (2,0);
\draw[black!100, very thick] (0,0) -- (0,1);
\draw[black!100, dashed, very thick] (2,0) -- (3,0);
\draw[black!100, dashed, very thick] (0,1) -- (0,1.5);
\node [below] at (1.5,0.875) {\(E_0\)};
\node [below] at (-0.3,0.875) {\(E_1\)};
\node [below] at (1.5,0) {\(E_2\)};
\node [below] at (-0.3,0) {\(E_3\)};
\filldraw[black!100] (0,0) circle (0.05);
\end{tikzpicture}
\end{center}
Note that whilst each \(E_i\) is a submanifold of \(E\), the singular locus \(S = E_1 \cup E_2 \cup E_3\) is not even a suborbifold of \(E\): indeed, in the natural (global) orbifold chart for \(E\), \(S\) corresponds to the subset \(\bb{R} \x \{0\} \cup \{0\} \x \bb{R} \pc \bb{R}^2\), which is not a linear subspace.
\end{Ex}

\subsection{Vector bundles over orbifolds}

\begin{Defn}[{Cf.\ \cite[\S14.1]{THKLFPFftScDO}}]
Let \(\pi:E \to B\) be a smooth map of orbifolds.  An (orbifold) vector bundle chart \(\Th_b\) for \(\pi\) about \(b \in B\) is the data of:
\begin{itemize}
\item A chart \(\Xi_b\) for \(B\) centred at \(b\);
\item A chart \(\Xi_e\) for \(E\) centred at a suitable \(e \in \pi^{-1}(b)\);
\item A local lift \(\tld{\pi}:\tld{U}_e \to \tld{U}_b\) for \(\pi\) and a homomorphism \(\ka_\pi:\Ga_e \to \Ga_b\) as in \dref{c-inf-map},
\end{itemize}
such that:
\begin{enumerate}
\item \(\tld{\pi}:\tld{U}_e \to \tld{U}_b\) is a vector bundle of some rank \(k\) and \(0 \in \tld{U}_e\) is the zero of the fibre over \(0 \in \tld{U}_b\);
\item \(\ka_\pi: \Ga_e \to \Ga_b\) is an isomorphism and \(\Ga_e \cong \Ga_b\) acts on \(\tld{U}_e\) via vector bundle automorphisms.
\end{enumerate}
An embedding of a chart \(\Th_{b_1} = (\Xi_{b_1},\Xi_{e_1},\tld{\pi}_1,(\ka_\pi)_1)\) into \(\Th_{b_2} = (\Xi'_{b_2},\Xi'_{e_2}, \tld{\pi}_2, (\ka_\pi)_2)\) is the data of embeddings of orbifold charts \((\io_{b_1b_2},\la_{b_1b_2}):\Xi_{b_1} \emb \Xi_{b_2}\) and \((\io_{e_1e_2},\la_{e_1e_2}):\Xi_{e_1} \emb \Xi'_{e_2}\) such that the induced equivariant commutative square:
\ew
\bcd[column sep = 13mm, row sep = 3mm]
\tld{U}_{e_1} \ar[rr, ^^22 \mla{\io_{e_1e_2}} ^^22] \ar[dd, ^^22 \mla{\tld{\pi}_2} ^^22] & & \tld{U}_{e_2} \ar[dd, ^^22 \mla{\tld{\pi}_2} ^^22]\\
& &\\
\tld{U}_{b_1} \ar[rr, ^^22 \mla{\io_{b_1b_2}} ^^22] & & \tld{U}_{b_2}
\ecd
\eew
is a bundle isomorphism (and thus the ranks of the two bundles are equal).   In particular, given a point \(b \in B\), the distinguished point \(e \in \pi^{-1}(b)\) is unique and does not depend on the choice of chart. 

As for orbifolds, two vector bundle charts \(\Th_{b_1}\) and \(\Th_{b_2}\) are called compatible if for all \(b \in U_{b_1} \cap U_{b_2}\), there exists a chart \(\Th_b\) centred at \(b\) which embeds into both \(\Th_{b_1}\) and \(\Th_{b_2}\).  An (orbifold) vector bundle is then a smooth map \(\pi:E \to B\) of orbifolds together with a maximal atlas of compatible vector bundle charts.  Note that \(\pi\) has a well defined rank \(k\).  The fibre \(E_b\) over a point \(b \in B\) is naturally identified with the space \(\lqt{\bb{R}^k}{\Ga_b}\).  A section of \(E\) is then simply a continuous map \(X: B \to E\) such that for each chart \(\Th_b = (\Xi_b,\Xi_e,\tld{\pi},\ka_\pi)\) for \(E\), there is a smooth, \(\Ga_b \cong \Ga_e\)-equivariant, section \(\tld{X}: \tld{U}_b \to \tld{U}_e\) such that the following diagram commutes:
\ew
\bcd[column sep = 13mm, row sep = 3mm]
\tld{U}_b \ar[rr, ^^22 \mla{\tld{X}} ^^22] \ar[dd, ^^22 \mla{\tld{\ch}_b} ^^22] & & \tld{U}_e \ar[dd, "\mla{\tld{\ch}_e}"]\\
& &\\
U_b \ar[rr, ^^22 \mla{X} ^^22] &  & U_e
\ecd
\eew
Note in particular, that \(\tld{X}|_0\) is invariant under the action of \(\Ga_e \cong \Ga_b\) on the fibre \(\tld{U}_e|_0\).  Thus \(X|_b\) can be regarded as an element of the subspace of \(\bb{R}^k\) upon which \(\Ga_b\) acts trivially, denoted \(Fix_{\Ga_b}(\bb{R}^k)\), which, in turn, can be regarded as a subspace of \(\lqt{\bb{R}^k}{\Ga_b}\).  Finally, let \(\pi: E \to B\) be a vector bundle and let \(F \cc E\) be a suborbifold.  The smooth map \(\pi|_F:F \to B\) is called a sub-vector bundle of \(\pi:E \to B\) if for any chart \(\Th_b = (\Xi_b,\Xi_e,\tld{\pi},\ka_\pi)\) for \(\pi\), the subset \(\tld{\ch}_e^{-1}(U_e \cap F) \pc \tld{U}_e\) is a \(\Ga_b \cong \Ga_e\)-invariant sub-vector bundle of \(\tld{\pi}:\tld{U}_e \to \tld{U}_b\).
\end{Defn}

Let \(E\) be an \(n\)-orbifold.  Given any chart \(\Xi = \lt(U, \Ga, \tld{U}, \ch\rt)\) for \(E\), the action of \(\Ga\) on \(\tld{U}\) naturally lifts to an action of \(\Ga\) on \(\T\tld{U}\) by bundle automorphisms.  Given a second chart \(\Xi' = (U',\Ga',\tld{U}',\ch')\) embedding into \(\Xi\), the map \(\tld{U}' \emb \tld{U}\) induces an equivariant embedding \(\T\tld{U}' \emb \T\tld{U}\).  Define:
\ew
\T E = \rqt{\lt[\coprod_{\Xi} \lt(\lqt{\T\tld{U}}{\Ga}\rt)\rt]}{\tl}
\eew
where the quotient by \(\tl\) denotes that one should glue along the embeddings \(\T\tld{U}' \emb \T\tld{U}\).  The resulting space \(\T E\) is an orbifold vector bundle over \(E\) and is termed the tangent bundle of \(E\).  Given \(e \in E\), the tangent space at \(e\), denoted \(\T_eE\), is the preimage of \(e\) under the map \(\T E \to E\) and may be identified with the quotient space \(\lqt{\bb{R}^n}{\Ga_e}\), where \(\Ga_e\) is the orbifold group at \(e\).  In a similar way, one can define the cotangent bundle of an orbifold, tensor bundles, bundles of exterior forms, etc., denoted in the usual way.

Now let \(\pi:F \to E\) be such a vector bundle over \(E\) of rank \(k\) and let \(\mc{A} \cc \bb{R}^k\) be a \(\GL(k;\bb{R})\)-invariant subset.  For each \(e \in E\), the subset \(\lqt{\mc{A}}{\Ga_e} \cc \lqt{\bb{R}^k}{\Ga_e}\) is well-defined and gives rise to a subset of \(\pi^{-1}(e)\) under the identification \(\pi^{-1}(e) \cong \lqt{\bb{R}^k}{\Ga_e}\).  As \(e \in E\) varies, this defines a sub-bundle of \(F\).

\begin{Defn}
Let \(E\) be an \(n\)-orbifold and let \(\ss{2}\T^*E\) denote the vector bundle of symmetric bilinear forms on \(\T E\).  The subspace \(\ss[+]{2}\lt(\bb{R}^n\rt)^* \pc \ss{2}\lt(\bb{R}^n\rt)^*\) of inner-products is \(\GL(n;\bb{R})\)-invariant and hence one can define the sub-bundle \(\ss[+]{2}\T^*E\) of positive-definite inner-products on \(E\).  A(n) (orbifold) Riemannian metric on \(E\) is simply a section \(g\) of \(\ss[+]{2}\T^*E\).  Likewise, the set \(\ss[\ge0]{2}\lt(\bb{R}^n\rt)^*\) of non-negative-definite symmetric bilinear forms is also \(\GL(n;\bb{R})\)-invariant.  This gives rise to the bundle \(\ss[\ge0]{2}\T^*E\) and sections \(h\) of this bundle are called (orbifold) Riemannian semi-metrics.  Given Riemannian (semi-)metrics \(g\) and \(h\), write \(g \ge h\) if \(g - h\) is non-negative definite, i.e.\ if \(g - h\) defines a \rsm.
\end{Defn}

Given a Riemannian (semi)-metric \(g\) on \(E\), recall that for each \(e \in E\), \(g|_e\) can be regarded as an element of the space \(Fix_{\Ga_e}\lt(\ss{2}\lt(\bb{R}^n\rt)^*\rt) \cc \lqt{\ss{2}\lt(\bb{R}^n\rt)^*}{\Ga_e}\).  Thus, given \(u, u' \in \T_e E \cong \lqt{\bb{R}^n}{\Ga_e}\), the quantity \(g(u,u')\) is well-defined; write \(g(u)\) as a shorthand for \(g(u,u)\).  Given a \(C^1\) curve \(\ga:[a,b] \to X\), one can define:
\ew
\ell^g(\ga) = \bigintsss_{[a,b]} g\lt(\dot{\ga}\rt) \dd \cal{L}
\eew
where \(\cal{L}\) denotes the Lebesgue measure on \([a,b]\).  If \(\ga\) is merely piecewise-\(C^1\), i.e.\ \(\ga = \ga_1 \1 \ga_2 \1 ... \1 \ga_k\) is the concatenation of \(C^1\)-curves, define:
\ew
\ell^g(\ga) = \sum_{i = 1}^k \ell^g(\ga_i).
\eew
Then \(\ell^g\) defines a (weak) length structure on the set of piecewise-\(C^1\) paths in \(E\), and thus a (semi)-metric on \(E\), denoted \(d^g\).

\subsubsection{Stratified distributions}

A distribution on an orbifold \(E\) is simply a sub-vector bundle \(\mc{D}\) of \(\T E\).  Given a Riemannian metric \(g\) on \(E\), define the orthocomplement \(\mc{D}^\bot\) to \(\mc{D}\) via the formula:
\ew
\mc{D}^\bot|_e = \lt\{ u \in \T_eE ~\m|~ g(u,u') = 0 \text{ for all } u' \in \mc{D}|_e\rt\}.
\eew
Then \(\mc{D}^\bot\) is also a distribution over \(E\).  Indeed recall that, locally, \(\mc{D}\) is given by \(\lqt{\tld{\mc{D}}}{\Ga_e}\) for some \(\Ga_e\)-invariant distribution \(\tld{\mc{D}} \pc \T \tld{U}\), where \(\tld{U}\) is a local chart for \(E\), and \(g\) is induced by a \(\Ga_e\)-invariant Riemannian metric \(\tld{g}\) over \(\tld{U}\); from this the result is clear.

Now let \(\Si\) be a stratification on \(E\).  Even in the case where \(E\) is a manifold, a general distribution \(\mc{D}\) can be `incompatible' with \(\Si\) in the following sense:
\begin{Ex}
Consider the distribution \(\mc{D}\) over \(E = \bb{R}^2\) given by \(\mc{D} = \<\del_1 + x^1\del_2\?\) and the stratification \(\Si\) of \(E\) given by:
\ew
E_0 = \lt.\bb{R}^2 \m\osr (\bb{R} \x \{0\}) \rt. \et E_1 = \bb{R} \x \{0\} \cong \bb{R}.
\eew
Then \(\mc{D} \cap \T E_1\) is not a distribution over \(E_1\), since over the non-zero points of \(E_1 \cong \bb{R}\), \(\mc{D}\) only intersects \(\T E_1\) along its zero-section, however over the point 0 the fibres of \(\mc{D}\) and \(\T E_1\) coincide.
\end{Ex}
This potential for incompatibility motivates the following definition, which cannot (to the author's knowledge) be found in the literature:

\begin{Defn}
Let \((E,\Si = \lt\{E_i\rt\})\) be a stratified orbifold.  A distribution \(\mc{D}\) on \(E\) is termed stratified if \(\mc{D}_i = \mc{D} \cap \T E_i \cc \T E_i\) is a distribution over \(E_i\), for all \(i\).
\end{Defn}

I remark that, if \(\mc{D}\) is stratified, then for every Riemannian metric \(g\), the orthocomplement \(\mc{C} = \mc{D}^\bot\) is also stratified.  Indeed, for each \(i\):
\ew
\mc{C} \cap \T E_i = \lt(\mc{D}_i\rt)^\bot,
\eew
where the orthocomplement is defined using Riemannian metric \(g|_{E_i}\) on the stratum \(E_i\).

\section{Stratified fibrations between orbifolds}\label{submersions}

Following \cite[\S3.2]{ItO}, I call a smooth map \(f: E \to B\) a submersion if for all \(e \in E\), there exist a chart \(\Xi_e\) centred at \(e\), a chart \(\Xi_{f(e)}\) centred at \(f(e)\), and a local representation \((\tld{f},\ka_f)\) in these charts such that \(\tld{f}\) is submersive and \(\ka_f\) is surjective.  The following result may not, to the author's knowledge, be found in the literature.  It is the analogue of Ehresmann's Theorem for orbifolds (see e.g.\ \cite[Thm.\ 9.3]{HT&CAG} for the classical statement):

\begin{Prop}\label{loc-triv}
Let \(E\), \(B\) be orbifolds and let \(\pi:E \to B\) a proper, surjective submersion.  Let \(\ker\dd\pi\) be the vertical distribution of \(\pi\), pick a Riemannian metric \(g\) on \(E\) and let \(\mc{C} = \ker\dd\pi^\bot\) be the corresponding horizontal distribution.

\begin{enumerate}
\item Let \(\ga: (-1,1) \to B\) be an embedded curve.  Then \(\pi^{-1}(\ga(-1,1)) \cc E\) is a suborbifold, denoted \(E_\ga\).  Write \(E_0 = \pi^{-1}(\ga(0))\).  Then there is an orbifold diffeomorphism:
\ew
E_\ga \cong E_0 \x \ga(-1,1)
\eew
identifying \(\pi\) with projection onto the second factor and \(\mc{C}\) with the product connection;
\item Let \(U \cc B\) be an open ball and write \(E_U = \pi^{-1}(U) \cc E\).  Write \(b\) for the centre of the ball \(U\) and write \(E_b = \pi^{-1}(b)\).  Then \(E_U\) is a suborbifold of \(E\) and there is an orbifold diffeomorphism:
\ew
E_U \cong E_b \x U
\eew
identifying \(\pi\) with projection onto the second factor (although the identification does not identify \(\mc{C}\) with the product distribution, in general).
\end{enumerate}
\end{Prop}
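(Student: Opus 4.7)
The plan is to adapt the classical proof of Ehresmann's theorem to the orbifold setting.  The crucial observation is that, since \(\mc{C}\) admits \(\Ga_e\)-equivariant local lifts (because \(g\) does), the horizontal lift of any \(\Ga\)-invariant vector field on the base is automatically \(\Ga\)-invariant on the total space, and thus descends to a well-defined orbifold vector field whose flow is an orbifold diffeomorphism.

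For Part 1, since \(\ga\) is embedded, I would extend \(\dot\ga\) to a smooth vector field \(X\) on an open neighbourhood \(V\) of \(\ga(-1,1)\) in \(B\) and construct a horizontal lift \(\tld{X}\) on \(\pi^{-1}(V)\) as follows.  In a submersion chart with local data \((\tld{U}_e, \tld{U}_{\pi(e)}, \tld\pi, \ka_\pi)\), the metric \(g\) lifts to a \(\Ga_e\)-invariant metric on \(\tld{U}_e\), giving a \(\Ga_e\)-invariant horizontal distribution \(\tld{\mc{C}} \pc \T\tld{U}_e\); the local representative of \(X\) is \(\Ga_{\pi(e)}\)-invariant and thus admits a unique horizontal lift to \(\tld{U}_e\).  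Since \(\ka_\pi\) is surjective, uniqueness of horizontal lifts forces this local lift to be \(\Ga_e\)-invariant, so it descends, and the pieces patch into a smooth orbifold vector field \(\tld{X}\) on \(\pi^{-1}(V)\).  By properness of \(\pi\), the flow \(\Phi_t\) of \(\tld{X}\) exists globally on \(\pi^{-1}(\ga([-T,T]))\) for every \(T<1\), and \((e_0,t) \mt \Phi_t(e_0)\) provides the desired orbifold diffeomorphism \(E_0 \x (-1,1) \to E_\ga\), simultaneously establishing that \(E_\ga\) is a suborbifold; the identifications of \(\pi\) with projection and of \(\mc{C}|_{E_\ga}\) with the product connection follow by construction.

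For Part 2, I would work in a chart \(\Xi_b\) identifying \(U\) with a \(\Ga_b\)-invariant open ball around \(0\) in \(\tld{U}_b\).  For each \(u \in U\), the segment \(s \mt su\) (\(s \in [0,1]\)) descends to a smooth curve \(\ga_u\) from \(b\) to \(u\) in \(U\).  Given \(e_0 \in E_b\), let \(\tld\ga_{e_0,u}\) denote the horizontal lift of \(\ga_u\) starting at \(e_0\) (which exists by Part 1 applied to rays through \(0\)) and define \(\Psi: E_b \x U \to E_U\) by \(\Psi(e_0,u) = \tld\ga_{e_0,u}(1)\), with \(\Psi(e_0,b) = e_0\).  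Part 1 gives smoothness and bijectivity of \(\Psi\) away from \(E_b\), and smoothness across \(E_b\) follows from joint smooth dependence of ODE solutions on parameters, applied to the horizontal-lift equation in equivariant charts around points of \(E_b\).  Bijectivity across \(E_b\) and smoothness of the inverse then follow from the inverse function theorem, since \(\dd\Psi|_{(e_0,b)}\) is manifestly an isomorphism; this also shows \(E_U\) is a suborbifold.

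The main technical obstacles are (a) the patching of local horizontal lifts into a globally defined smooth orbifold vector field, which reduces to the uniqueness and \(\Ga_e\)-equivariance of \(\tld{\mc{C}}\) together with compatibility of local submersion charts; and (b) the smoothness of \(\Psi\) across the central fibre \(E_b\) in Part 2, where the ``radial time'' coordinate degenerates.  Both are handled by careful bookkeeping in equivariant local charts, supplemented by standard smooth-dependence results for ODEs.
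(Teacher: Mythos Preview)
Your proposal is correct and follows essentially the same route as the paper: horizontal lifts of the tangent field to \(\ga\) integrated via the flow for Part~1, then radial trivialisation for Part~2. The paper's write-up is terser (it simply says ``integrate this vector field'' and ``trivialise along radial paths''), and it handles the suborbifold claim for \(E_\ga\) and \(E_U\) directly via the equivariant Implicit Function Theorem rather than deducing it from the flow construction as you do, but the overall strategy is the same.
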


To prove \pref{loc-triv}, I begin by recording the following equivariant version of the Implicit Function Theorem:
\begin{Prop}\label{EIFT}
Let \(\Ga_i \pc \GL(n_i;\bb{R})\) be finite subgroups, \(i = 1,2\), and let \(\tld{U} \cc \bb{R}^{n_1}\) be a \(\Ga_1\)-invariant open neighbourhood of \(0\).   Suppose one is given a group homomorphism \(\io:\Ga_1 \to \Ga_2\) and a smooth map \(f:\tld{U} \to \bb{R}^{n_2}\) which is \(\io\)-equivariant, which maps \(0 \in \bb{R}^{n_1} \mt 0 \in \bb{R}^{n_2}\) and has surjective derivative at \(0\).  Write \(\bb{K} = \ker (\dd f|_0)\), a \(\Ga_1\)-invariant subspace of \(\bb{R}^{n_1}\) of dimension \(n_1 - n_2\) and let \(\bb{T}\) be any \(\Ga_1\)-invariant complementary subspace to \(\bb{K}\) in \(\bb{R}^{n_1}\).

Then (shrinking \(\tld{U}\) if necessary) there is a \(\Ga_1\)-equivariant diffeomorphism:
\ew
F: \tld{U} \cc \bb{K} \x \bb{T} \to F\lt(\tld{U}\rt) \cc \bb{K} \x \bb{R}^{n_2}
\eew
(where \(\Ga_1\) acts on \(\bb{R}^{n_2}\) via the map \(\io:\Ga_1 \to \Ga_2\)) such that \(\dd F|_0\) identifies \(\bb{T}\) with \(\bb{R}^{n_2}\) and such that the diagram:
\ew
\bcd
\tld{U} \cc \bb{R}^{n_1} \ar[rr, " \mla{F} "] \ar[rd, " \mla{f} "'] & & F\lt(\tld{U}\rt) \cc \bb{K} \x \bb{R}^{n_2} \ar[ld, " \mla{\pi_2} "]\\
& \bb{R}^{n_2} &
\ecd
\eew
commutes and is equivariant.  In particular, if \(\io\) is surjective, then the bottom arrow in the following diagram is an isomorphism:
\ew
\bcd[column sep = 10mm, row sep = 3mm]
0 \x \bb{T} \ar[rr, " \mla{\dd f|_0} "] \ar[dd, " \mla{proj} "] & & \bb{R}^{n_2} \ar[dd, " \mla{proj} "]\\
& &\\
\lqt{(0 \x \bb{T})}{\Ga_1} \ar[rr] & & \lqt{\bb{R}^{n_2}}{\Ga_2}
\ecd
\eew
\end{Prop}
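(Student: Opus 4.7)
The plan is to reduce this to the classical (non-equivariant) inverse function theorem, with equivariance emerging automatically from a careful choice of the map to which the inverse function theorem is applied.  The key observation is that if one modifies \(f\) so as to leave the \(\bb{K}\)-component of the decomposition \(\bb{R}^{n_1} = \bb{K} \ds \bb{T}\) untouched, then equivariance of the resulting map --- with respect to the action of \(\Ga_1\) on \(\bb{K} \x \bb{R}^{n_2}\) given by the standard action on \(\bb{K}\) and the action via \(\io\) on \(\bb{R}^{n_2}\) --- follows trivially from the \(\io\)-equivariance of \(f\).

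Concretely, I would define
\ew
F: \tld{U} \pc \bb{K} \x \bb{T} \to \bb{K} \x \bb{R}^{n_2}, \qquad F(k,t) = (k, f(k,t)),
\eew
where \(f(k,t)\) denotes \(f(k + t)\) under the splitting \(\bb{R}^{n_1} = \bb{K} \ds \bb{T}\).  Since \(\bb{T}\) is complementary to \(\bb{K} = \ker(\dd f|_0)\), the restriction \(\dd f|_0|_\bb{T}:\bb{T} \to \bb{R}^{n_2}\) is a linear isomorphism, and hence \(\dd F|_0 = \Id_{\bb{K}} \ds \dd f|_0|_\bb{T}\) is invertible.  The classical inverse function theorem then yields an open neighbourhood \(V\) of \(0\) on which \(F\) is a diffeomorphism onto its image.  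Replacing \(V\) by the finite intersection \(\bigcap_{\si \in \Ga_1} \si \1 V\) (which is open since \(\Ga_1\) is finite, and \(\Ga_1\)-invariant by construction) gives a shrinking of \(\tld U\) on which \(F\) is still a diffeomorphism, so one may assume without loss of generality that the original domain \(\tld U\) has this property.

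Equivariance of \(F\) is now immediate: for any \(\si \in \Ga_1\), the \(\io\)-equivariance of \(f\) gives
\ew
F(\si \1 k, \si \1 t) = (\si \1 k, f(\si \1 k, \si \1 t)) = (\si \1 k, \io(\si) \1 f(k,t)),
\eew
which is precisely the action of \(\si\) on the target.  Commutativity of the triangle is trivial since \(\pi_2 \circ F(k,t) = f(k,t)\), and the claim that \(\dd F|_0\) identifies \(\bb{T}\) with \(\bb{R}^{n_2}\) is just the fact, already used, that \(\dd f|_0|_\bb{T}\) is an isomorphism.  For the final claim, when \(\io\) is surjective one has \(\io(\Ga_1) = \Ga_2\), so the \(\Ga_1\)-orbits on \(\bb{R}^{n_2}\) (defined via \(\io\)) coincide with \(\Ga_2\)-orbits, and the equivariant linear isomorphism \(\dd f|_0|_\bb{T}\) descends to the required bijection \(\lqt{\bb{T}}{\Ga_1} \to \lqt{\bb{R}^{n_2}}{\Ga_2}\).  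There is no substantial obstacle in the argument; the only point requiring care is ensuring \(\Ga_1\)-invariance of the domain, which is handled by the finite-intersection trick above.
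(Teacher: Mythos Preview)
Your argument is correct and is precisely the ``simple application of the Inverse Function Theorem'' that the paper alludes to (the paper in fact omits the proof entirely). The only detail worth noting is that your computation of \(\dd F|_0\) implicitly uses that the \(\bb{K}\)-direction contributes nothing to the second component since \(\bb{K} = \ker(\dd f|_0)\); this is true and makes your block-diagonal description of \(\dd F|_0\) valid.
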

The proof is a simple application of the Inverse Function Theorem, and so is omitted here.

\begin{proof}[Proof of \pref{loc-triv}]
That \(E_\ga\) and \(E_U\) are suborbifolds of \(E\) follows at once from the local description of \(\pi\) afforded by \pref{EIFT}.  Moreover, \pref{EIFT} shows that for each \(e \in E\), \(\dd\pi|_e\) defines an isomorphism \(\mc{C}_e \to \T_{\pi(e)}B\).

Now consider (1).  Choose a point \(e \in E_0\).  The derivative of \(\ga\) defines a natural map \(\dot{\ga}: (-1,1) \to \T B|_\ga\).  Using \(\dd\pi\), on may lift this uniquely to a map \((-1,1) \to \mc{C}\); integrating this vector field along \((-1,1)\) defines the horizontal lift of \(\ga\) starting from \(e\), denoted \(\ga_e\).  Now define a map:
\ew
\bcd[row sep = 0pt]
E_0 \x (-1,1) \ar[r]& E_\ga\\
(e,t) \ar[r, maps to]& \ga_e(t)
\ecd
\eew
One may verify that this is the required diffeomorphism.  Given (1), (2) follows as for manifolds, by trivialising \(\pi\) along radial paths emanating from \(b\).

\end{proof}

In general, the requirement that the homomorphisms \(\ka_f\) be surjective can be rather strict.  The following definition relaxes this condition:
\begin{Defn}
A smooth map of orbifolds \(f: E \to B\) is a weak submersion if for all \(e \in E\), there exists a chart \(\Xi_e\) centred at \(e\), a chart \(\Xi_{f(e)}\) centred at \(f(e)\), and a local representation \((\tld{f},\ka_f)\) in these charts such that \(\tld{f}\) is submersive. (In particular, \(\ka_f\) is not assumed to be surjective.)
\end{Defn}

Clearly \pref{loc-triv} does not apply to weak submersions in general, as the following example illustrates:

\begin{Ex}\label{FibrEx}
Let \(E = \lqt{\bb{T}^2}{\{\pm1\}}\), \(B = \lqt{[\bb{T}^1 \x \{0\}]}{\{\pm1\}}\) and let \(\pi: E \to B\) denote the canonical projection.  Then \(E\) is the `pillowcase', homeomorphic to a 2-sphere, with singular points precisely the four corners of the `pillowcase' and \(B\) is a closed interval, with singular points precisely the endpoints of the interval.
\e\label{Fibr-Ex}
\begin{tikzpicture}[scale = 1.4]
\filldraw[yellow!70] (0,0) rectangle (4,3);
\draw[black!100, very thick] (0,0) rectangle (4,3);
\draw[black!100, dashed, very thick] (1.5,0) to [out=180, in=180] (1.5,3);
\draw[black!100, dashed, very thick] (2,0) to [out=180, in=180] (2,3);
\draw[black!100, dashed, very thick] (2.5,0) to [out=180, in=180] (2.5,3);
\draw[black!100, very thick] (1.5,0) to [out=0, in=0] (1.5,3);
\draw[black!100, very thick] (2,0) to [out=0, in=0] (2,3);
\draw[black!100, very thick] (2.5,0) to [out=0, in=0] (2.5,3);
\draw[black!100, thick] (-0.03,-0.01)--(-0.03,3.01);
\draw[black!100, thick] (4.03,-0.01)--(4.03,3.01);

\node [below] at (2,-0.3) {\(\downarrow\)};

\draw[black!100, very thick] (0,-1) -- (4,-1);
\filldraw[black!100] (0,-1) circle (0.05);
\filldraw[black!100] (4,-1) circle (0.05);

\node [below] at (-0.25,-0.75) {\(b\)};

\filldraw[black!100] (-0.02,1.5) circle (0.07);
\node [below] at (-0.25, 1.65) {\(e\)};
\end{tikzpicture}
\ee
\(\pi\) is a surjective, proper, weak submersion, however whilst the preimage of a smooth point in \(B\) is topologically a circle, the preimage of a singular point is topologically a closed interval.  Thus \(\pi\) is not a locally-trivial fibration (or even a Serre fibration, since the homotopy groups of its fibres are not constant over the connected base space).  Note also that the points \(e\) and \(b\) marked in the diagram satisfy \(\Ga_e = \bf{1}\) and \(\Ga_b = \rqt{\bb{Z}}{2}\).  Thus \(\dd\pi_e: \mc{C}_e \to \T_bB\) is not an isomorphism (in fact, it is a 2:1 quotient).
\end{Ex}

However, a stratified version of \pref{loc-triv} still holds for weak submersions:
\begin{Cor}\label{ind-strat}
Let \(E\) and \(B\) be orbifolds, let \(\pi: E \to B\) be a proper, surjective, weak submersion and let \(\Si(B) = \lt\{B_i\rt\}\) be a stratification of \(B\).  For each \(B_i\), the subset \(\pi^{-1}\lt(B_i\rt) \cc E\) is a suborbifold and the restriction of \(\pi\) to \(\pi^{-1}\lt(B_i\rt) \cc E\) defines a submersion onto the submanifold \(B_i \pc B\) in the usual orbifold sense.  Let \(\lt\{E_i^j\rt\}_j\) denote the strata in the canonical stratification of \(\pi^{-1}\lt(B_i\rt)\).  Then \(\pi: E_i^j \to B_i\) is a surjective submersion for all \(i,j\).  Moreover, the collection \(\lt\{E_i^j\rt\}_{i,j}\) defines a stratification of \(E\), denoted \(\Si(\pi,E,B)\), \wrt\ which the distribution \(\mc{D} = \ker(\dd\pi)\) is a stratified distribution.  In particular, the orthocomplement of the vertical distribution (\wrt\ any Riemannian metric) is also stratified.
\end{Cor}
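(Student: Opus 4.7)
The central tool is the equivariant Implicit Function Theorem (\pref{EIFT}), which, for a weak submersion \(\pi:E\to B\), provides charts about each \(e\in E\) in which \(\tld{\pi}\) is identified with the projection \(\bb{K}\x\bb{R}^{n_2}\to\bb{R}^{n_2}\), equivariant \wrt\ a (not necessarily surjective) homomorphism \(\iota:\Ga_e\to\Ga_b\), where \(b=\pi(e)\). My plan is to exploit this local model, in combination with local triviality (\pref{loc-triv}), to extract the required stratification of \(E\) directly from the stratification \(\Si(B)\) of \(B\).

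First I would show that \(\pi^{-1}(B_i)\cc E\) is a suborbifold and that \(\pi|_{\pi^{-1}(B_i)}:\pi^{-1}(B_i)\to B_i\) is a submersion in the strong orbifold sense. Since \(B_i\) is a submanifold of \(B\), there is a regular chart \(\Xi_b\) at any \(b\in B_i\) with regular subspace \(\bb{I}_b\cc\bb{R}^{n_2}\) on which \(\Ga_b\) acts trivially. Since \(\iota(\Ga_e)\cc\Ga_b\), the subspace \(\bb{I}_b\) is also fixed by \(\iota(\Ga_e)\), so \(\tld{\pi}^{-1}(\bb{I}_b)=\bb{K}\x\bb{I}_b\) is a \(\Ga_e\)-invariant linear subspace of \(\bb{K}\x\bb{R}^{n_2}\cong\bb{R}^{n_1}\). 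By \dref{suborb}, this shows \(\pi^{-1}(B_i)\) is a suborbifold; the induced map to \(B_i\) is locally projection onto \(\bb{I}_b\). Because the orbifold group of \(b\) inside \(B_i\) is trivial, any homomorphism to it is automatically surjective, so the submersion is in the strong orbifold sense.

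Next I would analyse the canonical stratification \(\{E_i^j\}_j\) of \(\pi^{-1}(B_i)\). The key observation is that, in the local model, the stabilizer of \((x,y)\in\bb{K}\x\bb{I}_b\) equals \(\Stab_{\Ga_e}(x)\), since \(\Ga_e\) acts on \(\bb{I}_b\) only through \(\iota\) and that action is trivial. Hence the stratum through \(e\) with orbifold group \([\Ga_e]\) is locally \(\mathrm{Fix}_{\Ga_e}(\bb{K})\x\bb{I}_b\), a linear subspace of \(\bb{R}^{n_1}\) on which \(\Ga_e\) acts trivially. This simultaneously proves that \(E_i^j\) is a submanifold of \(E\) and that \(\pi|_{E_i^j}:E_i^j\to B_i\) is a classical submersion (projection onto \(\bb{I}_b\)). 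Surjectivity follows by applying \pref{loc-triv} to the proper, surjective, strong orbifold submersion \(\pi|_{\pi^{-1}(B_i)}\): local trivialisations \(\pi^{-1}(B_i)\cap\pi^{-1}(U)\cong F\x U\) are compatible with the canonical stratification (since orbifold groups are constant along \(U\)), so both \(\pi(E_i^j)\) and its complement in \(B_i\) are open, forcing \(\pi(E_i^j)=B_i\) by connectedness of \(B_i\).

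Finally, I would verify the global stratification and distribution claims. The strata \(E_i^j\) partition \(E\) because \(E=\bigsqcup_i\pi^{-1}(B_i)\) and the canonical stratification partitions each \(\pi^{-1}(B_i)\); and each \(E_i^j\) is a submanifold of \(E\) by the previous paragraph. For the stratified property of \(\mc{D}=\ker\dd\pi\), in the local model \(\mc{D}\) corresponds to \(\bb{K}\x 0\), so \(\mc{D}\cap\T E_i^j\) corresponds to \(\mathrm{Fix}_{\Ga_e}(\bb{K})\x 0\), which has constant rank along \(E_i^j\) and hence defines a sub-bundle of \(\T E_i^j\). The orthocomplement claim then follows from the remark made immediately after the definition of a stratified distribution. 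I expect the main technical hurdle to be the bookkeeping around the non-surjectivity of \(\iota\): one must carefully distinguish the orbifold group of \(e\) in \(E\) from its (possibly smaller) orbifold group in the suborbifold \(\pi^{-1}(B_i)\), passing to the quotient of \(\Ga_e\) by the kernel of its action on \(\bb{K}\x\bb{I}_b\) where necessary to obtain admissible orbifold charts.
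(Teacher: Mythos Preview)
Your proposal is correct and follows essentially the same line as the paper: both hinge on the observation that a weak submersion to a \emph{manifold} stratum \(B_i\) is automatically a submersion in the strong sense (the target orbifold group being trivial), after which the remaining claims are local-chart bookkeeping. The only stylistic difference is that the paper leans on \pref{loc-triv} to write \(\pi^{-1}(U)\cong \pi^{-1}(b)\x U\) and then identifies an open piece of \(E_i^j\) as \(W\x U\) with \(W\) a neighbourhood in the canonical stratum of the fibre, whereas you work directly in the \pref{EIFT} chart and identify the same piece as \(\mathrm{Fix}_{\Ga_e}(\bb{K})\x\bb{I}_b\); these are the same computation viewed at two levels of abstraction.
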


\begin{proof}
Since any weak submersion from an orbifold to a manifold is (trivially) a submersion (noting that any group homomorphism to the trivial group is surjective), the corollary follows simply by working in appropriate local charts as in the proof of \pref{loc-triv}.  For example, to see that \(\mc{D}\) is stratified \wrt\ \(\Si(\pi,E,B)\), pick a stratum \(B_i\) in \(B\), a stratum \(E_i^j\) in \(\pi^{-1}\lt(B_i\rt)\), a point \(e \in E_i^j\) and write \(b = \pi(e)\).  Let \(U\) be an open neighbourhood of \(b\) in \(B_i\) and recall from \pref{loc-triv} that \(E_U = \pi^{-1}(U)\) can be identified with \(\pi^{-1}(b) \x U\).  Letting \(S\) denote the canonical stratum of \(\pi^{-1}(b)\) containing \(e\) and writing \(W\) for a small open neighbourhood of \(e\) in \(S\), then (shrinking \(U\) if necessary) it follows that an open neighbourhood of \(e\) in \(E_i^j\) is given by \(W \x U\), hence \(\mc{D} \cap \T E_i^j\) is locally given by \(\T W \ds 0 \pc \T(W \x U)\) and whence \(\mc{D}\) is stratified, as claimed.

\end{proof}

\begin{Rk}\label{strat-rk}
Note that \(E_i^j\) need not be given globally as the product of a canonical stratum of \(\pi^{-1}(b)\) with \(B_i\): indeed, take \(E\) to be the M\"{o}bius band (viewed as an orbifold with singular set precisely its boundary), let \(B = S^1\) and consider the usual projection \(\pi:E \to B\).  Then the stratification of \(\M\) induced by \(\pi\) is simply its canonical stratification (which has two strata), whereas the canonical stratification of the preimage of any point in \(B\) has 3 strata, the two endpoints being different strata of the preimage, but belonging to the same stratum of \(E\).  More generally, writing \(\{S_j\}_j\) for the canonical stratification of \(\pi^{-1}(b)\), then the stratification of \(E_U\) given by \(\{S_j \x U\}_j\) is a refinement of the stratification \(\{E_i^j \cap E_U\}_j\).  Phrased differently, the strata \(E_i^j\) of \(\pi^{-1}(B_i)\) are stable under the `horizontal transport maps' used in the proof of \pref{loc-triv}.
\end{Rk}

\begin{Ex}
Return to \eref{Fibr-Ex}.  The stratification \(\Si(\pi,E,B)\) is depicted below:
\begin{center}
\begin{tikzpicture}[scale = 1.4]
\filldraw[yellow!70] (0.1,0) rectangle (3.9,3);
\draw[black!100, very thick] (0,0) rectangle (4,3);
\draw[black!100, dashed, very thick] (1.5,0) to [out=180, in=180] (1.5,3);
\draw[black!100, dashed, very thick] (2,0) to [out=180, in=180] (2,3);
\draw[black!100, dashed, very thick] (2.5,0) to [out=180, in=180] (2.5,3);
\draw[black!100, very thick] (1.5,0) to [out=0, in=0] (1.5,3);
\draw[black!100, very thick] (2,0) to [out=0, in=0] (2,3);
\draw[black!100, very thick] (2.5,0) to [out=0, in=0] (2.5,3);

\node [below] at (2,-0.3) {\(\downarrow\)};

\draw[black!100, very thick] (0,-1) -- (4,-1);
\filldraw[black!100] (0,-1) circle (0.05);
\filldraw[black!100] (4,-1) circle (0.05);

\node [below] at (2,-1.05) {\(B_0\)};
\node [below] at (0,-1.05) {\(B_1\)};
\node [below] at (4,-1.05) {\(B_2\)};

\node [below] at (2,1.6) {\(E_0^0\)};
\node [below] at (-0.3,1.6) {\(E_1^0\)};
\node [below] at (-0.3,3.3) {\(E_1^1\)};
\node [below] at (-0.1,0) {\(E_1^2\)};
\node [below] at (4.3,1.6) {\(E_2^0\)};
\node [below] at (4.3,3.3) {\(E_2^1\)};
\node [below] at (4.1,0) {\(E_2^2\)};

\filldraw[black!100] (0,0) circle (0.05);
\filldraw[black!100] (0,3) circle (0.05);
\filldraw[black!100] (4,0) circle (0.05);
\filldraw[black!100] (4,3) circle (0.05);
\end{tikzpicture}
\end{center}
\end{Ex}

\section{Stratified Riemannian, semi-Riemannian and quasi-Finslerian structures on orbifolds}\label{Strat-data}

The aim of this section is to introduce two new generalisations of Riemannian (semi)-metrics on orbifolds.  Firstly, the usual definition will be generalised to allow Riemannian (semi)-metrics on a stratified orbifold \((E,\Si)\) which are continuous on each stratum and `boundedly discontinuous' across strata.  Secondly, the definition of Riemannian metrics will be generalised by replacing the fibrewise inner product on the tangent bundle of E (the usual notion of a Riemannian metric) with a fibrewise quasi-norm on \(\T E\).  Both generalisations will be required to state and prove the main theorem \tref{Cvgce-Thm-1-FV}.

I begin by recalling the following definition \cite[\S15.10]{TVSI}:
\begin{Defn}\label{qnorm}
Let \(\bb{A}\) be a real vector space. A quasinorm on \(\bb{A}\) is a map \(\mc{L}:\bb{A} \to \bb{R}\) satisfying the following three properties:
\begin{enumerate}
\item For all \(a \in \bb{A}\): \(\mc{L}(a) \ge 0\), with equality \iff\ \(a=0\) (\(\mc{L}\) is `positive-definite');

\item For all \(\la\in\bb{R}\), \(a\in\bb{A}\):
\ew
\mc{L}(\la\cdot a) = |\la|\cdot\mc{L}(a);
\eew

\item There exists some \(k = k(\mc{L})>0\) such that for all \(a,a' \in \bb{A}\):
\ew
\mc{L}(a+a') \le k(\mc{L}(a) + \mc{L}(a')).
\eew
\end{enumerate}
Note that in the case \(k = 1\), this reduces to the definition of a norm.
\end{Defn}

In this paper, I restrict attention to continuous quasinorms.  In this case, condition (3) above becomes automatic:
\begin{Prop}
Let \(\bb{A}\) be a finite-dimensional real vector space and let \(\mc{L}: \bb{A} \to \bb{R}\) be a continuous map satisfying conditions (1) and (2) from \dref{qnorm}.  Then \(\mc{L}\) is a quasinorm.
\end{Prop}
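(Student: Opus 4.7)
The plan is to exploit the finite-dimensionality of $\bb{A}$ together with a compactness argument to sandwich $\mc{L}$ between two multiples of a fixed genuine norm, and then derive the quasi-triangle inequality from the triangle inequality of that norm.

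First, fix any norm $\|\1\|$ on $\bb{A}$; such a norm exists since $\bb{A}$ is finite-dimensional, and it induces the unique Hausdorff vector-space topology on $\bb{A}$. Let $S = \{a \in \bb{A} \mid \|a\| = 1\}$, which is compact. The map $\mc{L}|_S$ is continuous and, by condition (1), takes only strictly positive values on $S$. Hence there exist constants
\ew
0 < m := \min_{a \in S} \mc{L}(a) \le \max_{a \in S} \mc{L}(a) =: M < \0.
\eew

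Next, I would use condition (2) to promote these bounds from $S$ to all of $\bb{A}$. For any non-zero $a \in \bb{A}$, write $a = \|a\| \1 (a/\|a\|)$; homogeneity gives $\mc{L}(a) = \|a\| \1 \mc{L}(a/\|a\|)$, whence
\ew
m \|a\| \le \mc{L}(a) \le M\|a\|,
\eew
and this also holds trivially at $a=0$ (by condition (1) and (2) applied with $\la = 0$).

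Finally, for arbitrary $a, a' \in \bb{A}$, the triangle inequality for $\|\1\|$ combined with the above sandwich yields
\ew
\mc{L}(a + a') \le M\|a + a'\| \le M(\|a\| + \|a'\|) \le \tfrac{M}{m}\bigl(\mc{L}(a) + \mc{L}(a')\bigr),
\eew
so condition (3) of \dref{qnorm} holds with $k = M/m > 0$.

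There is essentially no obstacle here: the only subtlety is recognising that one does not need $\mc{L}$ to already satisfy a triangle-type inequality in order to compare it with a norm — positive-definiteness plus homogeneity plus continuity on a finite-dimensional space suffice to force equivalence with any fixed norm via the compactness of the unit sphere.
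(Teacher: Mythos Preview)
Your proof is correct, and in fact a little more direct than the paper's. The paper argues by contradiction: it considers the ratio $f(a,a') = \mc{L}(a+a')/(\mc{L}(a)+\mc{L}(a'))$ on $(\bb{A}\times\bb{A})\osr\{0\}$, supposes it is unbounded along some sequence $(a_i,a_i')$, normalises by $\|a_i\|+\|a_i'\|$ to land on a compact set, extracts a convergent subsequence, and then uses continuity of $f$ (together with its scale-invariance) to contradict $f\to\0$. You instead first prove the two-sided bound $m\|a\|\le\mc{L}(a)\le M\|a\|$ via compactness of the unit sphere, and then feed the triangle inequality for $\|\1\|$ through it. Both arguments ultimately rest on the same compactness, but yours yields an explicit constant $k=M/m$ and avoids the subsequence extraction; the paper's approach has the minor conceptual advantage of working directly with the quantity one wants to bound, without passing through an auxiliary norm equivalence.
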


\begin{proof}
Consider the continuous map:
\ew
f:(\bb{A} \x \bb{A})\backslash\{0\} &\to [0,\0)\\
(a,a') &\mt \frac{\mc{L}(a+a')}{\mc{L}(a) + \mc{L}(a')}
\eew
(Note that \(f\) is well-defined by condition (1) in \dref{qnorm}.)  For a contradiction, suppose \(f\) is unbounded and pick a sequence \((a_i,a_i') \in (\bb{A} \x \bb{A})\backslash\{0\}\) such that \(f(a_i,a_i') \to \0\) as \(i\to\0\). Choose some norm \(\|-\|\) on \(\bb{A}\) and consider the new sequence:
\ew
(\fr{a}_i,\fr{a}_i') = \lt(\frac{a_i}{\|a_i\| + \|a_i'\|}, \frac{a_i'}{\|a_i\| + \|a_i'\|}\rt) \in \bb{A} \x \bb{A}.
\eew
Clearly \((\fr{a}_i,\fr{a}_i')\) is bounded in the norm \(\|-\|\) and hence converges subsequentially to some \((\fr{a},\fr{a}') \in \bb{A} \x \bb{A}\) (since \(\bb{A}\) is finite-dimensional). Moreover, by construction, the sequence \((\fr{a}_i,\fr{a}_i')\) satisfies \(\|\fr{a}_i\| + \|\fr{a}_i'\| = 1\) and thus \(\|\fr{a}\| + \|\fr{a}'\| = 1\).  Hence \((\fr{a},\fr{a}') \in (\bb{A} \x \bb{A})\osr\{0\}\), and thus \(f(\fr{a},\fr{a}')\) is well-defined and finite.  By condition (2) in \dref{qnorm}, \(f\) satisfies \(f(\la-,\la-) = f(-,-)\) for any \(\la\ne0\).  Therefore:
\ew
f(a_i,a_i') = f\lt(\frac{a_i}{\|a_i\| + \|a_i'\|}, \frac{a_i'}{\|a_i\| + \|a_i'\|}\rt) = f(\fr{a}_i,\fr{a}_i') \oto{\text{subsequentially}} f(\fr{a},\fr{a}') < \0,
\eew
contradicting the fact that \(f(a_i,a_i') \to \0\) as \(i\to \0\).  Thus \(f\) is bounded and \(\mc{L}\) is a quasinorm.

\end{proof}

\begin{Defn}
Let \(E\) be an manifold.   A quasi-Finslerian structure on \(E\) shall here denote a continuous map:
\ew
\mc{L}:\T E \to \bb{R}
\eew
such that the restriction of \(\mc{L}\) to any fixed tangent space is a (continuous) quasinorm.  (Note that in the case where \(\mc{L}\) is smooth and a fibrewise norm, this recovers the usual definition of a Finslerian structure.)
\end{Defn}

Given a symmetric bilinear form \(g\) on a real vector space \(\bb{A}\), I shall refer to the kernel of the linear map \(()^\flat: a \in \bb{A} \mt g(a,-) \in \bb{A}^*\) simply as the kernel of \(g\).  Using this notation, I now introduce the required generalisations of Riemannian metrics to stratified orbifolds:

\begin{Defn}\label{sRsm}
Let \((E,\Si = \{E_i\}_i)\) be a stratified orbifold.
\begin{itemize}
\item A stratified Riemannian metric \(\h{g} = \{g_i\}_i\) on \(E\) is the data of a Riemannian metric \(g_i\) on each stratum \(E_i\) satisfying the extendibility condition that for each \(i\), there exists a continuous orbifold Riemannian metric \(\overline{g}_i\) on \(E\) whose tangential component along \(E_i\) is \(g_i\).

\item A stratified Riemannian semi-metric \(\h{g} = \{g_i\}_i\) on \(E\) is the data of a Riemannian semi-metric \(g_i\) on each stratum \(E_i\) satisfying the analogous condition that for each \(i\), there exists a continuous orbifold Riemannian semi-metric \(\overline{g}_i\) on \(\T E\) whose tangential component along \(E_i\) is \(g_i\).  If, in addition, \(\mc{D}\) is a stratified distribution on \(E\), then \(\h{g}\) is regular \wrt\ \(\mc{D}\) if for each \(i = 0,...,n\), the kernel of the Riemannian semi-metric \(\overline{g}_i\) is precisely the distribution \(\mc{D}\).  In particular, this implies that the kernel of the Riemannian semi-metric \(g_i\) on \(E_i\) is precisely \(\mc{D}_i = \mc{D} \cap \T E_i\).

\item A stratified quasi-Finslerian structure on \(E\) is the data of a quasi-Finslerian structure \(\mc{L}_i\) on each \(E_i\) satisfying the property that for every continuous orbifold Riemannian metric \(h\) on \(\T E\) and each index \(i\), there exists a continuous function \(C:\overline{E_i} \to (0,\0)\) such that
\e\label{LE-qF-defn}
\frac{1}{C} \|-\|_h \le \mc{L}_i \le C\|-\|_h \on E_i.
\ee
\end{itemize}
\end{Defn}

\begin{Rks}~

\begin{itemize}
\item
Any two continuous quasinorms \(\mc{L}\) and \(\mc{L}'\) on a finite-dimensional vector space \(\bb{A}\) are Lipschitz equivalent.  Indeed, let \(\mc{S} \pc \bb{A}\) be the unit sphere \wrt\ some norm on \(\bb{A}\); then \(\frac{\mc{L}}{\mc{L}'}: \mc{S} \to (0,\0)\) is well-defined and continuous, and hence has compact image (as \(\mc{S}\) is compact).  In light of this, the function \(C\) in \eref{LE-qF-defn} automatically exists on \(E_i\).  The significance of \eref{LE-qF-defn} is that \(C\) can be extended continuously over the boundary of \(E_i\), i.e.\ over the set \(\lt.\overline{E_i}\m\osr E_i\rt.\).

\item The extendibility condition for stratified Riemannian metrics can alternatively be stated as follows: for every subset \(K \cc E_i\) which is relatively compact in \(E\):
\begin{enumerate}
\item \(g_i|_K\) is uniformly continuous;
\item \(g_i\) is uniformly Lipschitz equivalent to \(g|_{E_i}\) for any continuous Riemannian metric \(g\) on \(E\).
\end{enumerate}
The reader will note, by contrast, that condition (1) is not imposed on stratified quasi-Finlserian structures.  This extra condition is required for stratified Riemannian metrics to facilitate some technical steps in \sref{Gen-Coll}.
\end{itemize}
\end{Rks}

The stratified structures defined in \dref{sRsm} naturally induce (semi)-metrics on the underlying orbifold \(E\), in the following way:
\begin{Defn}
Let \((E,\Si = \{E_i\}_i)\) be a stratified orbifold and recall the set \(\mc{A}\) of piecewise-\(C^1\) curves in \(E\).  Let \(\h{g} = \lt(g_i\rt)_i\) be a stratified Riemannian (semi-)metric on \(E\) and \(\lt(\ga:[a,b] \to E\rt) \in \mc{A}\).  Since each stratum \(E_i \cc E\) is locally-closed, \(I_i = \ga^{-1}(E_i) \pc [a,b]\) is also locally closed and hence measurable.  Moreover, since \(\ga\) is piecewise-\(C^1\) on the compact interval \([a,b]\), it is Lipschitz continuous on \([a,b]\)  and hence on each \(I_i\) (\wrt\ any Riemannian metric on \(E\)).  It follows from \cite[Lem.\ 3.1.7]{GMT} that \(\dot{\ga}\) lies in the subspace \(\T E_i \pc \T E\), and hence \(g_i\lt(\dot{\ga}\rt)\) is well-defined, almost everywhere on \(I_i\).  Now define \(\h{g}\lt(\dot{\ga}\rt): I \to [0,\0)\) by:
\ew
\h{g}\lt(\dot{\ga}\rt) = g_i\lt(\dot{\ga}\rt) \text{ on } I_i.
\eew
Since each \(g_i\) can be extended to a continuous (semi-)metric \(\overline{g}_i\) on all of \(E\), one has:
\ew
\lt|\h{g}\lt(\dot{\ga}\rt)\rt| \le \max_{i} \sup_{t \in I} \lt|\overline{g}_i\lt(\dot{\ga}(t)\rt)\rt| \text{ almost everywhere}
\eew
and thus \(\h{g}\lt(\dot{\ga}\rt)\) defines a non-negative element of \(L^\0(I)\).  One then defines:
\ew
\ell^{\h{g}}(\ga) = \bigintsss_I \h{g}\lt(\dot{\ga}\rt)^\frac{1}{2} \dd\cal{L}
\eew
where \(\cal{L}\) denotes the Lebesgue measure on \(I\).  \(\lt(\mc{A},\ell^{\h{g}}\rt)\) defines a length structure on \(E\); denote the corresponding metric by \(d^{\h{g}}\).  A similar construction applies to stratified quasi-Finslerian structures \(\h{\mc{L}}\), resulting in a metric \(d^{\h{\mc{L}}}\).
\end{Defn}

Note that any two stratified Riemannian metrics \(\h{g}\) and \(\h{h}\) on \(E\) are locally uniformly Lipschitz equivalent on \(E\), in the sense that for all \(K \C E\) compact, there exists a constant \(C(K)>0\) such that for all \(E_i\):
\ew
\frac{1}{C(K)}g_i \le h_i \le C(K)g_i \text{ on } E_i \cap K.
\eew
(Indeed, by compactness of \(K\), for each \(E_i\), there exists \(C_i(K) >0\) such that \(\frac{1}{C_i(K)}\overline{g}_i \le \overline{h}_i \le C_i(K)\overline{g}_i\) on \(K\); hence the result follows by setting \(C(K) = \max_i C_i(K)\).)  In particular, the metric \(d^{\h{g}}\) induces the usual topology on \(E\).  The analogous result holds for stratified quasi-Finslerian structures.

By contrast, two stratified Riemannian semi-metrics are not, in general, even pointwise Lipschitz equivalent.  However if one fixes a stratified distribution \(\mc{D}\) on \(E\), then any two stratified Riemannian semi-metrics \(\h{g}\) and \(\h{h}\) on \(E\) which are regular \wrt\ \(\mc{D}\) are locally uniformly Lipschitz equivalent.

\begin{Rk}[Refinement of stratification]
Every orbifold Riemannian (semi-)metric \(g\) on a stratified orbifold \((E,\Si = \{E_i\}_i)\) defines a stratified Riemannian (semi-)metric \(\h{g}\) on \(E\) in the obvious way, by setting \(g_i = g|_{E_i}\) (where \(g|_{E_i}\) denotes the tangential component of \(g\) along \(E_i\)).  Then \(\ell^{\h{g}}(\ga) = \ell^g(\ga)\) for all piecewise-\(C^1\) paths in \(E\) and hence \(d^g = d^{\h{g}}\), i.e.\ the (semi-)metrics induced by \(g\) and \(\h{g}\) are the same.

More generally, given a stratified orbifold \((E, \Si = \{E_i\}_i)\), recall that I term a second stratification \(\Si' = \{E'_j\}_j\) of \(E\) a refinement of \(\Si\) if for every \(j\), there exists a (necessarily unique) \(i(j) \in \{0,...,n\}\) such that \(E'_j \cc E_{i(j)}\).  Given a stratified Riemannian (semi-)metric \(\h{g} = \{g_i\}_i\) on \(E\) \wrt\ the stratification \(\Si\), it also defines a stratified Riemannian (semi-)metric \(\h{g}' = \{g'_j\}_j\) \wrt\ \(\Si'\) via \(g'_j = g_{i(j)}|_{E'_j}\).  It is again clear that \(d^{\h{g}} = d^{\h{g}'}\).  The corresponding results for stratified quasi-Finslerian structures are also valid.
\end{Rk}

\begin{A-d}
Let \(E\) be a stratified orbifold and let \(\h{\mc{L}}\) be a stratified quasi-Finslerian structure on \(E\).   Then the length-structure \(\ell^{\h{\mc{L}}}\) has the surprising property that the quantity \(\ell^{\h{\mc{L}}}(\ga)\) does not depend continuously on the piecewise-\(C^1\) curve \(\ga\).  This phenomenon can be observed even on an unstratified manifold; see \cite[Example 2.4.4]{ACiMG}.
\end{A-d}

\section{Statement of main result}\label{MT-section}

The purpose of this section is to present a precise statement of the main theorem of this paper.  I begin by introducing the necessary notation:

\begin{Nt}\label{cvgce-nt}~

\noindent1. \ \parbox[t]{16.2cm}{Let \(E_2\) be a compact orbifold, let \((B,\Si_B = \{B_i\}_i)\) be a stratified orbifold, let \(\pi:E_2 \to B\) be a surjective, weak submersion with path-connected fibres and let \(\Si_2 = \{(E_2)_j\}_j\) be the induced stratification on \(E_2\) (see \cref{ind-strat}).  Let \(\h{g}^\0\) be a stratified Riemannian semi-metric on \(E_2\) which is regular \wrt\ the stratified distribution \(\ker\dd\pi\) and write \(d^\0\) for the semi-metric on \(E_2\) induced by \(\h{g}^\0\).}\\

\noindent2. \ \parbox[t]{16.2cm}{Let \((E_1,\Si_1)\) be another compact, stratified orbifold and, for \(i = 1,2\), let \(S_i(j) \cc E_i\) (\(j = 1,...,N\)) be disjoint, closed, subsets.  Write:
\ew
S_i = \coprod_{j=1}^N S_i(j)
\eew
and suppose there is a stratified orbifold diffeomorphism \(\Ph: E_1 \osr S_1  \to  E_2 \osr S_2\).}\\

\noindent3. \ \parbox[t]{16.2cm}{For each \(j = 1,...,N\), let \(\lt(U^{(r)}_1(j)\rt)_{r \in (0,1]}\) be a family of open neighbourhoods of \(S_1(j) \cc E_1\) such that \(U^{(r)}_1 \cc U^{(s)}_1\) for \(r < s\).  Suppose moreover that for all \(j \ne j' \in \{1,...,N\}\):
\e\label{dj}
\overline{U^{(1)}_1(j)} \cap \overline{U^{(1)}_1(j')} = \es.
\ee
Write \(U^{(r)}_2(j) = \lt. E_2 \m\osr \Ph\lt(E_1 \m\osr U^{(r)}_1(j) \rt) \rt.\) for the corresponding nested open neighbourhoods of \(S_2(j) \cc E_2\), where \(\overline{U^{(1)}_2(j)}\) and \(\overline{U^{(1)}_2(j')}\) are, again, disjoint for distinct \(j\) and \(j'\).  Write:
\ew
U^{(r)}_i = \coprod_{j=1}^N U^{(r)}_i(j), \hs{2mm} i= 1,2.
\eew}\\

\noindent4. \ \parbox[t]{16.2cm}{For each \(j = 1,...,N\), let \(S(j)\) be a set and let \(\fr{f}_{i,j}: U^{(1)}_i(j) \to S(j)\) be surjective maps such that the following diagram commutes:}
\ew
\bcd
\lt. U^{(1)}_1(j) \m\osr S_1(j) \rt. \ar[rr, " \Ph "] \ar[rd, " \fr{f}_{1,j} "'] & & \lt. U^{(1)}_2(j) \m\osr S_2(j) \rt. \ar[dl, " \fr{f}_{2,j} "]\\
& S(j) &
\ecd
\eew
\end{Nt}

Intuitively, one should think of \(S_1\) and \(S_2\) as representing `singular' regions in \(E_1\) and \(E_2\) respectively.  The existence of \(\Ph\) then asserts that the orbifolds \(E_1\) and \(E_2\) are diffeomorphic `away from their singular regions' and condition 4 states that, for each \(j\), the singular regions \(S_1(j)\) and \(S_2(j)\) are `fibred' over a common base space \(S(j)\).  Using \(\Ph\), I shall henceforth identify \(E_1 \osr S_1\) with \(E_2 \osr S_2\) and write \(E^{(r)} = E_1 \osr U^{(r)}_1 \cong E_2 \osr U^{(r)}_2\).  Similarly, I shall write \(\del^{(r)}(j) = \del U_1^{(r)}(j) \cong \del U_2^{(r)}(j)\).

I now state the full version of the main result of this paper:

\begin{Thm}\label{Cvgce-Thm-1-FV}~

\em\noindent 1. \ \parbox[t]{16.2cm}{Fix a stratum \(B_i\) in \(B\), write \(\pi^{-1}(B_i) = \bigcup_{l = 0}^k (E_2)_{j(l)}\) and write \(g_{j(l)}\) for the component of \(\h{g}^\0\) on the stratum \((E_2)_{j(l)}\).  Define a map \(\mc{L}_i:\T B_i \to \bb{R}\) as follows: given \(p\in B_i\) and \(u \in \T_pB_i\), define
\e\label{mcL}
\mc{L}_i(u) = \min_{l = 0}^k ~ \inf_{x \in (E_2)_{j(l)} \cap \pi^{-1}(p)} \lt\{\|u'\|_{g_{j(l)}}~\m|~u' \in \T_x (E_2)_{j(l)} \text{ such that } \dd\pi(u') = u \rt\}.
\ee
Then \(\h{\mc{L}} = \lt\{\mc{L}_i\rt\}_i\) defines a stratified quasi-Finslerian structure on \(B\) and \(\lt(B,\h{\mc{L}}\rt)\) is the free metric space on \(\lt(E_2,\h{g}^\0\rt)\).}\\

\noindent2. \ \parbox[t]{16.2cm}{Now suppose further that \(\lt(\h{g}^\mu\rt)_{\mu \in [1,\0)}\) are stratified Riemannian metrics on \(E_1\) inducing metrics \(d^\mu\), such that the following 4 conditions are satisfied:

(i) \ For all \(r \in (0,1]\):
\ew
\h{g}^\mu \to \h{g}^\0 \text{ uniformly as } \mu \to \0 \text{ on the space } E^{(r)}
\eew
and there exist constants \(\La_\mu(r) \ge 0\) such that:
\e\label{La-cvgce}
\lim_{\mu \to \0} \La_\mu(r) = 1 \et \h{g}^\mu \ge \La_\mu(r)^2 \h{g}^\0 \text{ on } E^{(r)} \text{ for all } \mu \in [1,\0);
\ee

(ii)
\ew
\limsup_{r \to 0} ~ \limsup_{\mu \to \0} ~\max_{j \in \{1,...,N\}} ~ \sup_{p \in S(j)} ~ \diam_{d^\mu}\lt[ \fr{f}_{1,j}^{-1}(\{p\}) \cap U^{(r)}_1(j) \rt] = 0;
\eew}

\parbox[t]{16.2cm}{(iii)
\ew
\limsup_{r \to 0} ~ \max_{j \in \{1,...,N\}} ~ \sup_{p \in S} ~ \diam_{d^\0}\lt[ \fr{f}_{2,j}^{-1}(\{p\}) \cap U^{(r)}_2(j) \rt] = 0;
\eew

(iv)
\ew
\limsup_{r \to 0} ~ \limsup_{\mu \to \0} ~\max_{j \in \{1,...,N\}} ~ \sup_{\del^{(r)}(j)} \lt| d^\mu - d^\0 \rt| = 0.
\eew

Then:
\ew
\lt(E_1,d^\mu\rt) \to \lt(B,\h{\mc{L}}\rt) \as \mu \to \0,
\eew
in the Gromov--Hausdorff sense.}
\end{Thm}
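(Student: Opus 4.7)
The plan is to apply \pref{2stage} with \(\lt(X^\mu,d^\mu\rt) = \lt(E_1,d^\mu\rt)\) and \(\lt(X',d'\rt) = \lt(E_2,d^{\h{g}^\0}\rt)\), so the proof naturally splits into (a) identifying the free metric space of \(\lt(E_2,d^\0\rt)\) with \(\lt(B,d^{\h{\mc{L}}}\rt)\), which is essentially Part 1 of the theorem, and (b) controlling the forwards discrepancy \(\fr{D}[\lt(E_1,d^\mu\rt) \to \lt(E_2,d^\0\rt)] \to 0\) as \(\mu \to \0\).

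For Part 1, I first verify that \(\h{\mc{L}}\) is a stratified quasi-Finslerian structure.  On each stratum \(B_i\), \cref{ind-strat} guarantees that \(\pi\) restricted to each \((E_2)_{j(l)}\) is a submersion onto \(B_i\) in the usual orbifold sense, so by \pref{loc-triv} horizontal lifts exist locally, making the infimum in \eref{mcL} over a non-empty set with minimum over \(l\) attained by compactness of fibres.  Positive-definiteness of \(\mc{L}_i\) follows from regularity of \(\h{g}^\0\) \wrt\ \(\ker\dd\pi\) (so horizontal lifts of \(u\ne 0\) have positive norm), while homogeneity and continuity are immediate from the local product structure.  The extendibility bound on \(\overline{B_i}\) required by the definition of a stratified quasi-Finslerian structure comes from combining the extendibility of \(\h{g}^\0\) as a stratified semi-metric with the local trivialisations, using that fibrewise minima of jointly continuous functions of \(u\) and the lift point vary continuously over closures of strata.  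The identification of the free metric space then proceeds in two steps: regularity plus path-connectedness of the fibres gives \(e\tl_{d^\0} e'\) \iff\ \(\pi(e) = \pi(e')\), so \(\rqt{E_2}{\tl_{d^\0}} = B\) set-theoretically; the length/distance comparison is done by lifting piecewise-\(C^1\) paths in \(B\) horizontally (within a single stratum of \(E_2\) chosen by the minimum in \eref{mcL}) to get \(d^\0_\tl \le d^{\h{\mc{L}}}\), and by projecting near-minimisers in \(E_2\) along \(\pi\) for the reverse inequality.

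For Part 2, I construct explicit \(\ep_\mu\)-isometries \(f_\mu:\lt(E_1,d^\mu\rt) \to \lt(E_2,d^\0\rt)\) as follows: choose a sequence \(r_\mu \to 0\) slowly enough that all four error quantities in (i)--(iv) at scale \(r_\mu\) tend to \(0\), set \(f_\mu := \Ph\) on \(E^{(r_\mu)}\), and for each \(e \in U_1^{(r_\mu)}(j)\) define \(f_\mu(e)\) to be a chosen point of \(S_2(j) \cap \fr{f}_{2,j}^{-1}\lt(\fr{f}_{1,j}(e)\rt)\).  The \(\ep\)-net condition reduces to (iii): points of \(U_2^{(r_\mu)}(j)\) are \(d^\0\)-close to \(S_2(j) \cc f_\mu(E_1)\) via the fibre-diameter bound.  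The distance-preservation step is the main obstacle, and I handle it by splitting into cases according to whether \(e,e'\) lie in \(E^{(r_\mu)}\) or in \(U_1^{(r_\mu)}\).  For the regular-region case, a near-\(d^\mu\)-minimising path in \(E_1\) is rerouted so that any excursion into \(U_1^{(r_\mu)}(j)\) entering through \(q\in\del^{(r_\mu)}(j)\) and exiting through \(q'\) is replaced by boundary travel between \(q\) and \(q'\): the fibre-diameter bound (ii) shows the excised piece has \(d^\mu\)-length \(o(1)\), and the replacement path has \(d^\mu\)-length close to \(d^\0\) by (iv) together with (i); outside \(U_1^{(r_\mu)}\), the uniform convergence \(\h{g}^\mu \to \h{g}^\0\) with the domination \(\La_\mu(r_\mu)^2\) from (i) gives \(|d^\mu - d^\0| = o(1)\) along the remaining pieces.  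The opposite inequality is symmetric, using (iii) in place of (ii) to reroute near-\(d^\0\)-minimisers in \(E_2\); cases involving endpoints in \(U_1^{(r_\mu)}\) are reduced to the regular-region case by appending short fibrewise pieces whose lengths are controlled by (ii) and (iii).

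The principal technical difficulty is the rerouting argument in the distance-preservation step: \emph{a priori}, as \(\mu \to \0\) paths in \(\lt(E_1,d^\mu\rt)\) might exploit short-cuts through the collapsing region \(U_1^{(r)}\) that have no geometric counterpart in \(\lt(E_2,d^\0\rt)\), and the whole content of hypotheses (ii)--(iv) is to rule such phenomena out by forcing any such short-cut to be approximable by a boundary-then-regular-region path whose \(d^\0\)-length is controlled.  Calibrating the rate \(r_\mu \to 0\) against the three iterated \(\limsup\)s in (i)--(iv) so that the four error contributions tend to \(0\) simultaneously is delicate but standard once the rerouting lemmas are in place.
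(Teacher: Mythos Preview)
Your overall two-stage strategy via \pref{2stage} matches the paper exactly, and your construction of the map \(f_\mu\) (the paper's \(\tld{\Ph}\)) is essentially correct. However, there are genuine gaps in both parts.

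\textbf{Part 1, the inequality \(d^\0 \le d^{\h{\mc{L}}}\).} Your proposal to lift a piecewise-\(C^1\) path \(\ga\) in \(B\) ``horizontally within a single stratum of \(E_2\) chosen by the minimum in \eref{mcL}'' does not work. The infimum defining \(\mc{L}_i(\dot{\ga}(t))\) is taken over \emph{all} fibre points \(x \in \pi^{-1}(\ga(t))\) and over \emph{all} strata \(l\), and the minimiser may jump discontinuously in \(t\); a horizontal lift, which fixes the fibre coordinate, will therefore in general satisfy \(\h{g}^\0(\dot{\tld{\ga}}(t)) > \mc{L}_i(\dot{\ga}(t))^2\) on a set of positive measure. (The paper's own example \(E = \bb{T}^2 \x \bb{T}^1\) with \(g = (1+\la\cos^2\al)(\dd\th^1)^{\ts2} + (1+\la\sin^2\al)(\dd\th^2)^{\ts2}\) already exhibits this: for the piecewise-linear path \((t,0)\) then \((\frac{1}{2},t-\frac{1}{2})\), the optimal fibre coordinate is \(\al = \pi/2\) on the first leg and \(\al = 0\) on the second.) The paper instead constructs, for each \(n\), a lift \(\tld{\ga}_n\) by choosing for each Lebesgue density point \(x\) of \(\ga^{-1}(B_i)\) a nearly-optimal fibre point and stratum, applying the Vitali Covering Theorem to extract a finite disjoint family of small intervals covering most of \([0,1]\), lifting locally on each, and patching the pieces using a separate lifting lemma (\lref{lifting-lem}) that produces piecewise-\(C^1\) lifts with prescribed endpoints. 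The upshot is only convergence in measure \(\h{g}^\0(\dot{\tld{\ga}}_n)^{1/2} \to \h{\mc{L}}(\dot{\ga})^{1/2}\), and the length inequality then follows from a uniform {\it a priori} bound and the Dominated Convergence Theorem. This measure-theoretic machinery is the real content of Part 1, and your sketch bypasses it.

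\textbf{Part 2, the rerouting step.} Your claim that ``the fibre-diameter bound (ii) shows the excised piece has \(d^\mu\)-length \(o(1)\)'' is incorrect: condition (ii) bounds only the \(d^\mu\)-diameter of a single \(\fr{f}_{1,j}\)-fibre in \(U^{(r)}_1(j)\), not the diameter of \(U^{(r)}_1(j)\) itself, so an excursion entering at \(q\) and exiting at \(q'\) on \(\del^{(r)}(j)\) can have arbitrary length. What is actually needed is that \(d^\mu(q,q')\) is close to \(d^\0(q,q')\), which is exactly condition (iv). The paper formalises this via a combinatorial decomposition (\lref{comb-decomp}) expressing \(d^\mu(x,y)\) for \(x,y \in E^{(r)}\) as a minimum over ordered tuples of singular regions visited, in terms of the intrinsic metric \(d^{\mu,r}\) on \(E^{(r)}\) and the values of \(d^\mu\) on the boundary sets \(\del^{(r)}(j)\); the same decomposition holds for \(\mu = \0\), and subtracting gives an estimate with \((N+2)\) copies of \(\sup|d^{\mu,r}-d^{\0,r}|\) and \(N\) copies of the condition-(iv) term. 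Condition (ii) is used only to shift endpoints \(x,y \in U^{(r)}_1\) to nearby points \(x',y' \in \del^{(r)}\), not to control excursions.
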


\noindent(By `\(\h{g}^\mu \to \h{g}^\0\) uniformly on \(E^{(r)}\)', it is meant that for each fixed reference stratified Riemannian metric \(\h{h}\) on \(E_1\), one has:
\ew
\lt\|g^\mu_i - g^\0_i\rt\|_{h_i} \to 0 \as \mu \to \0 \text{ on } E_i \cap E^{(r)}.
\eew
Since any two stratified Riemannian metrics on a compact orbifold are uniformly Lipschitz equivalent, this definition is independent of the choice of \(\h{h}\).)

\begin{Rk}
Note that if the bilinear form \(\h{g}^\0\) is given on each stratum \((E_2)_j\) lying over \(B_i\) as \(\pi^*h_i\), where \(\h{h} = \{h_i\}_i\) is a stratified Riemannian metric on \(B\) (write \(\h{g}^\0 = \pi^*\h{h})\), then \(\h{\mc{L}} = \h{h}\), and \(\lt(B,d^{\h{h}}\rt)\) is clearly the free metric space on \(\lt(E_2,d^\0\rt)\).  More generally, if \(\h{g}^\0 = \pi^*\h{h}\) on some proper subset \(U\) of \(E_2\), then once again it is clear that \(\h{\mc{L}} = \h{h}\) over \(U\), since the value of \(\h{\mc{L}}\) at a point \(b \in B\) only depends on the values of \(\h{g}^\0\) on the fibre over \(b\).  However due to the global definition of the metrics \(d_B\) and \(d^{\h{\mc{L}}}\), the assumption \(\h{g}^\0 = \pi^*\h{h}\) on \(U\) provides no simplification and the proof that \(d_B = d^{\h{\mc{L}}}\) -- far from being trivial -- assumes its general form in this case.
\end{Rk}

Intuitively, condition (i) states that the metrics \(\h{g}^\mu\) on \(E_1\) converge locally uniformly away from the singular region \(S_1\) to a stratified Riemannian semi-metric \(\h{g}^\0\), which extends to some given compactification \(E_2\) of \(E_1 \osr S_1\); conditions (ii) and (iii) state that the fibres of the maps \(f_{i,j}: U_i^{(r)}(j) \to S(j)\) are `small' \wrt\ \(\h{g}^\mu\) and \(\h{g}^\0\) respectively, provided that \(\mu\) is sufficiently large and \(r\) is sufficiently small and, finally, condition (iv) states that the two metrics \(d^\mu\) and \(d^\0\) approximately agree near the singular regions \(S_1\) and \(S_2\).  Further intuition can be gained by considering the case where the map \(\Ph: E_1 \osr S_1 \to E_2 \osr S_2\) admits a non-smooth extension to a map \(\overline{\Ph}:E_1 \to E_2\).  In this case, by considering the composite \(\pi \circ \overline{\Ph}: E_1 \to B\), one can regard \(E_1\) as fibred over the space \(B\) and \tref{Cvgce-Thm-1-FV} can then be regarded as a `collapsing' theorem for this fibration which states, informally, that if the diameter of the fibres of \(E_1 \to B\) (\wrt\ the metrics \(\h{g}^\mu\)) tend to zero away from some singular set \(S_1\) and if the limiting size of the region \(S_1\) is `not too large', then the orbifold \(E_1\) collapses to the orbifold \(B\) in the limit as \(\mu \to \0\).  The use of \tref{Cvgce-Thm-1-FV} in \cite{URftHFoG23F} is an example of such an application.

The proof of \tref{Cvgce-Thm-1-FV} occupies the rest of this paper.  I begin by explaining why the hypotheses of \tref{Cvgce-Thm-1-FV} are necessary.

Firstly, to understand why it is necessary to use quasi-Finslerian structures, rather than the more usual Finslerian or Riemannian structures to describe the free metric space on \(\lt(E_2,\h{g}^\0\rt)\), consider the following simple (unstratified) example, where \(\la > 2\) is a constant:
\caw
\pi : E = \bb{T}^2_{\th^1,\th^2} \x \bb{T}^1_{\al} \oto{proj} \bb{T}^2_{\th^1,\th^2} = B\\
g = \lt(1 + \la\cos^2\al\rt)\lt(\dd \th^1\rt)^{\ts2} + \lt(1 + \la\sin^2\al\rt)\lt(\dd \th^2\rt)^{\ts2}
\caaw
Write \(\del_i = \frac{\del}{\del \th^i}\) (\(i=1,2\)), let \(p = \lt(\th^1,\th^2, \al\rt) \in E\) and for each \(a,b \in \bb{R}\), let \(u(a,b)\) be any vector in \(\T_p\bb{T}^3\) such that \(\dd\pi|_p(u(a,b)) = a\del_1 + b\del_2\).  Then:
\ew
\|u(a,b)\|_g = \sqrt{a^2 + (\la+1)b^2 + \la\lt(a^2-b^2\rt)\cos^2\al} = \sqrt{(\la+1)a^2 + b^2 + \la\lt(b^2 - a^2\rt)\sin^2\al}
\eew
and thus:
\ew
\mc{L}(a\del_1 + b\del_2) =
\begin{dcases*}
\sqrt{a^2 + (\la+1)b^2} &if \(|a| \ge |b|\)\\
\sqrt{(\la+1)a^2 + b^2} &if \(|a| \le |b|\).
\end{dcases*}
\eew
Whilst this function is continuous, it is not differentiable along \(a=b\).  Moreover:
\ew
\mc{L}(\del_1 + \del_2) = \sqrt{\la+2} > 2 = \mc{L}(\del_1) + \mc{L}(\del_2)
\eew
and thus \(\mc{L}\) does not satisfy the triangle inequality.  This is the motivation behind the definition of stratified quasi-Finslerian structures in \sref{Strat-data}.

Secondly, to understand why the existence of \(\La_\mu(r) \to 1\) in condition (i) is necessary for the second conclusion of \tref{Cvgce-Thm-1-FV} to be valid, take \(E_1 = E_2 = \bb{T}^2_{\th^1,\th^2}\) with the trivial (1-stratum) stratifications, let \(U^{(r)}_i = S_i = \es\) for \(i = 1,2\), let \(\h{g}^\0 = \lt(\dd \th^1\rt)^{\ts2}\) and let:
\ew
\h{g}^\mu = \lt(1+\mu^{-1}\rt)\lt(\dd \th^1\rt)^{\ts2} - 2\mu^{-1}\dd \th^1 \s \dd \th^2 + \mu^{-2} \lt(\dd \th^2\rt)^{\ts2}.
\eew
Since \(U^{(r)}_i = S_i = \es\), conditions (ii)--(iv) in \tref{Cvgce-Thm-1-FV} are automatically satisfied.  Moreover, since \(\h{g}^\mu \to \h{g}^\0\) uniformly as \(\mu\to\0\), condition (i) is also satisfied, expect for the existence of suitable \(\La_\mu\).  However \(d^\mu \to 0\) uniformly as \(\mu \to \0\).  Indeed, for each \(a \in [0,1]\), consider the path:
\ew
\bcd[row sep = 0pt]
\ga: [0,a] \ar[r]& \bb{T}^2\\
s \ar[r, maps to]& \lt(s,\mu \cdot s\rt)
\ecd
\eew
Then one may calculate that \(g^\mu\lt(\dot{\ga}\rt) = \mu^{-1}\) and thus:
\ew
d^\mu\lt[(0,0), (a, \mu \cdot a)\rt] \le a\mu^{-\frac{1}{2}} \le \mu^{-\frac{1}{2}}.
\eew
Likewise, by considering a vertical path between \((a,b)\) and \((a,\mu \1 b)\) one sees that \(d^\mu\lt[ (a,b),(a,\mu \1 a)\rt] \le \mu^{-1}\) for any \(b \in [0,1]\) and thus for all \((a,b) \in \bb{T}^2\):
\ew
d^\mu\lt[ (0,0), (a,b)\rt] \le \mu^{-1} + \mu^{-\frac{1}{2}} \to 0 \as \mu \to \0.
\eew
By \cite[Example 7.4.4]{ACiMG} it follows that \(\lt(\bb{T}^2, \h{g}^\mu\rt)\) converges to the one-point space in the \GH\ sense, as \(\mu \to \0\).  However, the free metric space on \(\lt(\bb{T}^2, \h{g}^\0\rt)\) is \(\lt(\bb{T}^1_{\th^1}, \lt(\dd\th^1\rt)^{\ts2}\rt)\).  This shows the necessity of the existence of the \(\La_\mu \to 1\).  An analogous phenomenon was observed (albeit from a very different perspective) in \cite{SCafSS}.

(To see why no such \(\La_\mu \to 1\) can exist, suppose that the inequalities \(g^\mu \ge \La_\mu^2 g^\0\) held for all \(\mu \in [1,\0)\).  Then the bilinear form \(g^\mu - \La_\mu^2g^\0\) would be non-negative definite and hence would have non-negative determinant, i.e.:
\ew
\mu^{-2}\lt(1+\mu^{-1} - \La_\mu^2\rt) - \mu^{-2} \ge 0.
\eew
This rearranges to \(\La_\mu^2 \le \mu^{-1}\) and hence would force \(\La_\mu\to0\) as \(\mu\to\0\).)

\section{Proof of \tref{Cvgce-Thm-1-FV}: Part 1}\label{Gen-Coll}

The purpose of this section is to prove the first part of \tref{Cvgce-Thm-1-FV}.  The reader should note that the orbifold \(E_1\), and also the sets \(S_i(j)\), \(U_i^{(r)}(j)\) and their associated data, play no role in this section.  Thus in this section, for simplicity of notation, I denote \(E_2\) by \(E\), \(\Si_2\) by \(\Si\) and \(\h{g}^\0\) by \(\h{g}\).  The reader should also note that the results of this section remain valid when \(E\) is non-compact, provided that the map \(\pi\) is proper.

The first task is to verify that \(\h{\mc{L}}\) is a well-defined stratified quasi-Finslerian structure on \(B\):
\begin{Prop}\label{strat-well-def}
For each stratum \(B_i\) in \(B\), recall the definition:
\ew
\mc{L}_i(u) = \min_{l = 0}^k ~ \inf_{x \in E_{j(l)} \cap \pi^{-1}(p)} \lt\{\|u'\|_{g_{j(l)}}~\m|~u' \in \T_x E_{j(l)} \text{ such that } \dd\pi(u') = u \rt\},
\eew
for \(u \in \T_pB_i\), where \(\pi^{-1}(B_i) = \bigcup_{l = 0}^k E_{j(l)}\) and \(g_{j(l)}\) is the component of \(\h{g}\) on the stratum \(E_{j(l)}\).  Then \(\h{\mc{L}} = \lt\{\mc{L}_i\rt\}_i\) defines a stratified quasi-Finslerian structure on \(B\).
\end{Prop}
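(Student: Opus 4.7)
The plan is to verify the two requirements of a stratified quasi-Finslerian structure from \dref{sRsm}: (a) each \(\mc{L}_i\) is a continuous fibrewise quasinorm on \(\T B_i\), and (b) for any continuous orbifold Riemannian metric \(h\) on \(\T B\) there exists a continuous function \(C: \overline{B_i} \to (0,\0)\) witnessing the required Lipschitz equivalence.  Throughout I write \(\mc{L}_{i,l}(u) = \inf_{x \in E_{j(l)} \cap \pi^{-1}(p)}\lt\{\|u'\|_{g_{j(l)}} : u' \in \T_xE_{j(l)},\, \dd\pi(u') = u\rt\}\), so that \(\mc{L}_i = \min_l \mc{L}_{i,l}\).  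The key preliminary observation I would establish first is that, because \(\h{g}\) is regular \wrt\ \(\ker\dd\pi\), each \(g_{j(l)}\) has kernel \(\mc{D}_{j(l)} = \ker\dd\pi \cap \T E_{j(l)}\) on \(\T E_{j(l)}\) (for a positive semi-definite form, the kernel agrees with the isotropic cone).  Consequently, \(\|u'\|_{g_{j(l)}}\) is independent of the chosen lift \(u'\), and extending \(g_{j(l)}\) to the continuous orbifold semi-metric \(\overline{g}_{j(l)}\) on \(\T E\) yields a well-defined continuous function \(\overline{F}_l(x,u) = \overline{g}_{j(l)}|_x(u',u')^{1/2}\) on the fibred bundle \(E \x_B \T B\).

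With this in hand, (a) is verified as follows.  Homogeneity is immediate, and positive-definiteness holds because any lift of a non-zero \(u\) lies outside \(\ker\dd\pi = \ker\overline{g}_{j(l)}\), forcing \(\overline{g}_{j(l)}(u',u') > 0\).  For continuity of each \(\mc{L}_{i,l}\) on \(\T B_i\), I would combine \cref{ind-strat} with \pref{loc-triv}(2) and \rref{strat-rk} to produce, near each \(p \in B_i\), a local trivialization \(\pi^{-1}(U) \cap \pi^{-1}(B_i) \cong \pi^{-1}(p) \x U\) which preserves the induced stratification, so that \(E_{j(l)} \cap \pi^{-1}(p')\) corresponds to a fixed subset \(F_l \cc \pi^{-1}(p)\) for all \(p' \in U\).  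A short density argument using the continuity of \(\overline{F}_l\) then shows \(\mc{L}_{i,l}(u) = \inf_{x \in \overline{F_l}} \overline{F}_l(x,u)\), and since \(\overline{F_l}\) is a fixed compact set this infimum depends continuously on \((p',u)\) by standard parameter-dependent compactness.  Hence \(\mc{L}_i = \min_l \mc{L}_{i,l}\) is a continuous, positive-definite, absolutely-homogeneous map, and the proposition proved immediately after \dref{qnorm} then implies that \(\mc{L}_i\) restricts on each tangent space to a quasinorm, completing (a).

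For (b), I plan to use a global compactness argument on the fibred unit sphere bundle \(Z_h = \lt\{(x,u) \in E \x S_hB : \pi(x) = \text{base}(u)\rt\}\), where \(S_hB\) denotes the unit sphere bundle of \((\T B, h)\); this \(Z_h\) is compact since \(E\) and \(B\) are.  Each \(\overline{F}_l\) restricts to a continuous, strictly positive function on \(Z_h\) (positivity because \(u \ne 0\) forces any lift to lie outside \(\ker\overline{g}_{j(l)}\)), and hence attains a finite maximum \(M_l\) and a strictly positive minimum \(m_l\).  The surjectivity clause of \cref{ind-strat} ensures \(E_{j(l)} \cap \pi^{-1}(p) \ne \es\) for every \(p \in B_i\), so \(m_l \le \mc{L}_{i,l}(u) \le M_l\) for all \(u \in S_hB|_p\); taking minima over \(l\) yields \(\min_l m_l \le \mc{L}_i(u) \le \min_l M_l\) uniformly on \(B_i\), so the constant \(C := \max\lt(\min_l M_l,\, 1/\min_l m_l\rt)\) meets the requirement and, being constant, trivially extends over \(\overline{B_i}\).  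The main obstacle I anticipate is the non-compactness of the fibre intersections \(E_{j(l)} \cap \pi^{-1}(p)\), arising because the strata \(E_{j(l)}\) are only locally closed in \(E\); working throughout with the continuous extensions \(\overline{g}_{j(l)}\) and passing to the compact closures \(\overline{F_l}\) circumvents this issue without changing the value of the infimum.
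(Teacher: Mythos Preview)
Your proposal is correct and follows essentially the same strategy as the paper: both use regularity of \(\h{g}\) \wrt\ \(\ker\dd\pi\) to make the lift-independent function \(\overline{F}_l\) well-defined, both invoke the local trivialisation of \pref{loc-triv} together with the stratum-stability observation of \rref{strat-rk} to control the fibrewise infimum, and both obtain the Lipschitz bound (b) by comparing \(\overline{g}_{j(l)}\) with \(\pi^*h\) via compactness. The only notable difference is in the packaging of the continuity argument: the paper runs an explicit sequential \(\limsup\)/\(\liminf\) argument on \(\pi^{-1}(p)\), whereas you observe directly that \(\mc{L}_{i,l}(u) = \inf_{x \in \overline{F_l}}\overline{F}_l(x,u)\) over a \emph{fixed} compact set, whence continuity is immediate; your formulation is somewhat cleaner but relies on exactly the same ingredients.
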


\begin{proof}
Firstly, note that \(\mc{L}_i\) is well-defined.  Indeed \(\pi|_{E_{j(l)}}: E_{j(l)} \to B_i\) is a surjective submersion for each \(l \in \{0,...,k\}\) and so for each \(x \in E_{j(l)} \cap \pi^{-1}(p)\) there exist vectors \(u' \in \T_x E_{j(l)}\) satisfying \(\dd\pi(u') = u\).   Moreover \(\|u'\|_{g_{j(l)}}\) is independent of the choice of \(u'\), since any two such choices of \(u'\) differ by an element of \(\ker(\dd\pi)|_x \cap \T_x E_{j(l)}\) which is in turn precisely the kernel of \(g_{j(l)}\), since \(\h{g}\) is regular \wrt\ \(\ker\dd\pi\).

The proof now breaks into two cases of increasing generality:\\

\noindent{\bf Case 1: \(B\) is a manifold.}  Let \(p \in B\), \(u \in \T_p B\) and \(x \in E_i \cap \pi^{-1}(p)\) for some stratum \(E_i\) of \(E\).  It is beneficial to have a preferred choice of preimage of \(u\) under \(\dd\pi: \T_x E \to \T_p B\).  To this end, let \(\mc{C}\) be the (stratified) orthocomplement to \(\mc{D}\) \wrt\ some Riemannian metric on \(E\).  Then (cf.\ \pref{EIFT}):
\ew
\dd\pi_x: \mc{C}_x \to \T_p B
\eew
is an isomorphism for all \(x \in E_i\); let \(u_x\) denote the preimage of \(u\) under this isomorphism.  Given any \(u' \in T_x E_i\) such that \(\dd\pi(u') = u\), the difference \(u' - u_x\) lies in \(\ker\dd\pi\) and, since \(\h{g}\) is regular \wrt\ \(\ker\dd\pi\), it follows that \(g_i(u') = \ol{g}_i(u_x)\) (where \(\ol{g}_i\) is the continuous extension of \(g_i\) to all of \(E\)) and hence:
\e\label{mcL-2}
\mc{L}(u) = \min_i ~ \underbrace{\inf \lt\{\|u_x\|_{\ol{g}_i}~\m|~x \in \pi^{-1}(p) \cap E_i\rt\}}_{= \mc{L}_i}.
\ee

Next, note that given quasi-Finlserian structures \(\mc{L}_1\), \(\mc{L}_2\) on \(B\), their pointwise minimum \(\mc{L} = \min(\mc{L}_1,\mc{L}_2)\) is also a quasi-Finslerian structure.  Thus, it suffices to prove that each \(\mc{L}_i\) defines a quasi-Finlserian structure on \(B\).

It is clear from \eref{mcL-2} that \(\mc{L}_i\) is non-negative and satisfies:
\ew
\mc{L}_i(\la\cdot u) = |\la|\mc{L}_i(u)
\eew
for all \(u \in \T B\) and \(\la \in \bb{R}\).   To see that \(\mc{L}_i\) is positive-definite, let \(u \in \T_p B\) satisfy \(\mc{L}_i(u) = 0\).   Choose a sequence \(x_n \in \pi^{-1}(p) \cap E_i\) such that \(\|u_{x_n}\|_{\ol{g}_i} \to 0 \) as \(n \to \0\).   Since \(\pi\) is proper, \(\pi^{-1}(p)\) is compact and thus \(x_n\) converges subsequentially to some \(x \in \pi^{-1}(p)\); passing to a subsequence, one may assume \wlg\ that \(x_n \to x\).  Then \(\|u_{x_n}\|_{\ol{g}_i} \to \|u_x\|_{\ol{g}_i}\) as \(n \to \0\), hence \(\|u_x\|_{\ol{g}_i} = 0\) and whence \(u_x = 0\), since \(\ol{g}_i\) is non-degenerate on \(\mc{C}\), which in turn follows from the regularity of \(\h{g}\) \wrt\ \(\mc{D}\).  Thus:
\ew
u = \dd\pi(u_x) = 0,
\eew
as required.  Thus to prove that \(\mc{L}_i\) is a quasi-Finlserian structure, it suffices to prove continuity.

To this end, choose \(b \in B\), \(u \in \T_bB\) and pick a sequence \(u_n \in \T_{b_n}B\) tending to \(u\) as \(n \to \0\) (in particular, \(b_n \to b\) as \(n \to \0\)).  Pick a sequence \(x_m \in \pi^{-1}(b) \cap E_i\) such that:
\ew
\mc{L}_i(u) = \lim_{m\to\0} \|u_{x_m}\|_{\ol{g}_i}.
\eew
As above, by properness of \(\pi\), \(x_m\) converges subsequentially to some \(x \in \pi^{-1}(b)\); by passing to a subsequence, \wlg\ \(x_m \to x\) as \(m \to \0\).  By \pref{loc-triv}, one can choose a neighbourhood \(U_b\) of \(b\) satisfying \(\pi^{-1}(U_b) \cong U_b \x \pi^{-1}(b)\).  \Wlg\ assume that all \(b_n\) lie in \(U_b\).  Note that both \((u_n)_{(b_n,x_n)}\) and \(u_{(b,x_n)}\) converge to \(u_{(b,x)} \in \mc{C}\) as \(n\to\0\).  Thus:
\e\label{pre-limsup}
\|(u_n)_{(b_n,x_n)}\|_{\ol{g}_i},  \|u_{(b,x_n)}\|_{\ol{g}_i} \to \|u_{(b,x)}\|_{\overline{g}_i} \as n \to \0
\ee
(note that \((b,x_n) \in E_i\) implies that \((b_n,x_n) \in E_i\); see \rref{strat-rk}).  Now clearly:
\ew
\|(u_n)_{(b_n,x_n)}\|_{\ol{g}_i} \ge \mc{L}_i(u_n)
\eew
for each \(n\).  Thus \eref{pre-limsup} implies that:
\ew
\limsup_{n \to \0} \mc{L}_i(u_n) \le \lim_{n \to \0}\|(u_n)_{(b_n,x_n)}\|_{\ol{g}_i} = \lim_{n \to \0} \|u_{(b,x_n)}\|_{\ol{g}_i} = \mc{L}_i(u).
\eew

Thus suppose, for the sake of contradiction, that \(\liminf_{n \to \0} \mc{L}_i(u_n) \le \mc{L}_i(u) - \eta\) for some \(\eta>0\).   Then \(\mc{L}_i(u_n) \le \mc{L}_i(u) - \eta\) for infinitely many \(n\) and thus, \wlg, for all \(n\), by passing to a subsequence if necessary.  For each \(n\), choose \(y_n \in \pi^{-1}(b) \cap E_i\) such that:
\e\label{eta/3}
\lt\|(u_n)_{(b_n,y_n)}\rt\|_{\ol{g}_i} \le \mc{L}_i(u_n) + \frac{\eta}{3}.
\ee
Using compactness of \(\pi^{-1}(b)\) again, by passing to a subsequence if necessary \(y_n \to y \in \pi^{-1}(b)\).  Thus as above:
\ew
\|(u_n)_{(b_n,y_n)}\|_{\ol{g}_i},  \|u_{(b,y_n)}\|_{\ol{g}_i} \to \|u_{(b,y)}\|_{\ol{g}_i} \as n \to \0.
\eew

Choose \(n\) sufficiently large so that \(\lt|\|(u_n)_{(b_n,y_n)}\|_{\ol{g}_i} - \|u_{(b,y_n)}\|_{\ol{g}_i}\rt| \le \frac{\eta}{3}\). Then:
\ew
\mc{L}_i(u) &\le \|u_{(b,y_n)}\|_{\ol{g}_i}\\
&\le \|(u_n)_{(b_n,y_n)}\|_{\ol{g}_i} + \frac{\eta}{3}\\
&\le \mc{L}_i(u_n) + \frac{2\eta}{3} \hs{5mm} \text{(by \eref{eta/3})}.
\eew
However by assumption \(\mc{L}_i(u_n) \le \mc{L}_i(u) - \eta\) and thus \(\mc{L}_i(u) \le \mc{L}_i(u) - \frac{\eta}{3}\), yielding a contradiction.  Thus \(\liminf_{n \to \0} \mc{L}_i(u_n) \ge \mc{L}_i(u)\) and so \(\mc{L}_i(u_n) \to \mc{L}_i(u)\) as \(n \to \0\), proving that \(\mc{L}_i\) is continuous, as required.\\

\noindent{\bf Case 2: General Case.}  Using Case 1, for each stratum \(B_i\) of \(B\), the function \(\mc{L}_i\) is a quasi-Finslerian structure on \(B_i\).  Thus, to prove that \(\h{\mc{L}} = \{\mc{L}_i\}_i\) is a stratified quasi-Finslerian structure on \(B\) -- and hence to complete the proof of \pref{strat-well-def} -- it suffices to prove that given a Riemannian metric \(h\) on \(B\), each \(\mc{L}_i\) is Lipschitz equivalent to \(h\) up to the boundary of \(B_i\).   To this end, fix a stratum \(E_j\) of \(E\) and recall the extension \(\overline{g}_j\) of \(g_j\) to all of \(E\).  Since both the Riemannian semi-metrics \(\pi^*h\) and \(\overline{g}_j\) vanish on \(\mc{D} = \ker\dd\pi\) and are positive-definite on \(\mc{C}\), it follows that there is a continuous map \(D:E \to (0,\0)\) such that:
\ew
\frac{1}{D}\pi^*h \le \overline{g}_j \le D\pi^*h \text{ on all of } E.
\eew
Since \(\pi\) is proper, one may define a continuous map \(C:B \to (0,\0)\) such that for all \(e \in E\):
\ew
D(e) \le C(\pi(e)).
\eew
Then it follows immediately from \eref{mcL-2} that:
\ew
\frac{1}{C}\|-\|_h \le \mc{L}_i \le C\|-\|_h \text{ on } B_i.
\eew
This completes the proof.

\end{proof}

I now prove the first part of \tref{Cvgce-Thm-1-FV}:

\begin{Prop}\label{FMS-Thm}
Let \(\h{\mc{L}}\) be as in \pref{strat-well-def}.   Then \(\lt(B,d^{\h{\mc{L}}}\rt)\) is the free metric space on \(\lt(E,d^{\h{g}}\rt)\).
\end{Prop}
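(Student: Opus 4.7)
The plan is to exhibit the quotient $\rqt{E}{\tl_{d^{\h g}}}$ as $B$ via $\pi$, and then to verify that the resulting quotient metric coincides with $d^{\h{\mc L}}$. The argument divides naturally into three steps. As a first observation, the $\h g$-length of any piecewise-$C^1$ curve $\ga$ contained in a single fibre $\pi^{-1}(p)$ vanishes: the velocity $\dot\ga$ takes values in $\ker\dd\pi = \mc D$ almost everywhere, and the regularity of $\h g$ with respect to $\mc D$ forces $g_j$ to vanish on $\mc D \cap \T E_j$ for every stratum $E_j$. Combined with the path-connectedness of the fibres (and smoothability of topological paths in suborbifolds), this yields $d^{\h g}(x, x') = 0$ whenever $\pi(x) = \pi(x')$, so $\pi$ descends to a continuous surjection $\bar\pi : \rqt{E}{\tl_{d^{\h g}}} \to B$.

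The lower bound $d^{\h g}(x,y) \ge d^{\h{\mc L}}(\pi(x), \pi(y))$ would then follow from a direct comparison of length functionals. For any piecewise-$C^1$ $\ga : [a,b] \to E$, decompose via $I_j = \ga^{-1}(E_j)$; at almost every $t \in I_j$, if $E_j \pc \pi^{-1}(B_i)$, the tangent $\dot\ga(t) \in \T_{\ga(t)} E_j$ is itself one of the lifts of $\dd\pi(\dot\ga(t))$ over which the infimum defining $\mc L_i$ is taken, so $g_j(\dot\ga(t)) \ge \mc L_i\bigl(\dd\pi(\dot\ga(t))\bigr)^2$. Integrating over each $I_j$ and summing gives $\ell^{\h g}(\ga) \ge \ell^{\h{\mc L}}(\pi\circ\ga)$, and infimising over $\ga$ yields the bound. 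Since $d^{\h{\mc L}}$ is a genuine metric by \pref{strat-well-def}, this also forces $\bar\pi$ to be injective, hence a continuous non-expansive bijection.

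The reverse inequality is the substantive half. Given $x, y \in E$ and $\ep > 0$, pick a piecewise-$C^1$ $\si : [a,b] \to B$ from $\pi(x)$ to $\pi(y)$ with $\ell^{\h{\mc L}}(\si) \le d^{\h{\mc L}}(\pi(x), \pi(y)) + \ep$; the aim is to construct a piecewise-$C^1$ lift $\tld\si : [a,b] \to E$ from some point in $\pi^{-1}(\pi(x))$ to one in $\pi^{-1}(\pi(y))$ (which are $\tl_{d^{\h g}}$-equivalent to $x$ and $y$ by the first step) of $\h g$-length at most $\ell^{\h{\mc L}}(\si) + \ep$. After a preliminary density reduction ensuring that $\si$ meets only finitely many strata of $B$ on finitely many subintervals, partition $[a,b]$ into pieces $[t_k, t_{k+1}]$ so fine that $\si|_{[t_k,t_{k+1}]}$ lies in a single stratum $B_{i_k}$ and $\mc L_{i_k}(\dot\si)$ is nearly constant there. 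On each piece, choose a stratum $E_{j(l_k)}$ lying over $B_{i_k}$ and a point $x_k \in E_{j(l_k)} \cap \pi^{-1}(\si(t_k))$ at which the infimum defining $\mc L_{i_k}(\dot\si(t_k))$ is $\ep$-almost realised, then apply \pref{loc-triv} to the surjective submersion $\pi|_{E_{j(l_k)}} : E_{j(l_k)} \to B_{i_k}$ (which is such by \cref{ind-strat}) to horizontally lift $\si|_{[t_k,t_{k+1}]}$ starting at $x_k$. Consecutive lifts end and begin in the common fibre $\pi^{-1}(\si(t_{k+1}))$ and, by the first step, can be glued along a zero-$\h g$-length fibre path; concatenation then produces the required $\tld\si$.

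The main obstacles are concentrated in this third step. First, a general piecewise-$C^1$ $\si$ visits the strata of $B$ on merely measurable sets, so the reduction to clean interval decompositions requires an approximation argument respecting the condition of the frontier. Second, controlling the length contributed by each horizontal-lift segment requires the continuity of $\h{\mc L}$ established in \pref{strat-well-def} together with its uniform behaviour on compact sets, so that the lift of a short piece of $\si$ from $x_k$ has $\h g$-length close to $\mc L_{i_k}(\dot\si(t_k)) \cdot (t_{k+1} - t_k)$. Third, because the infimum defining $\mc L_i$ is taken jointly over a stratum index $l$ and a point in the corresponding fibre-stratum, one must invoke compactness of $\pi^{-1}(\si(t_k))$ (from properness of $\pi$) and continuity of the extended semi-metrics $\ol g_j$ to confirm that the infimum is attained, and further to guarantee that a choice of $x_k$ close to a minimiser keeps the horizontal lift inside $E_{j(l_k)}$ throughout the short subinterval.
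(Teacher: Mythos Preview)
Your overall structure mirrors the paper's: the lower bound $d^{\h g}(e,e')\ge d^{\h{\mc L}}(\pi(e),\pi(e'))$ is exactly as you describe, and the hard direction is the construction of near-optimal lifts. However, your third step rests on a ``preliminary density reduction ensuring that $\si$ meets only finitely many strata of $B$ on finitely many subintervals,'' and this is a genuine gap. There is no evident way to approximate a general piecewise-$C^1$ curve in $B$ by one with this interval property while controlling its $\h{\mc L}$-length: the structure $\h{\mc L}$ is only stratum-wise continuous and merely Lipschitz-comparable across strata boundaries, so perturbing $\si$ off a lower stratum into a neighbouring higher one need not change $\ell^{\h{\mc L}}$ by a small amount. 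Your subsequent gluing (``consecutive lifts end and begin in the common fibre'') presupposes that the partition covers $[a,b]$, so the whole construction collapses if the reduction fails.

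The paper sidesteps this entirely by working with the original $\si$ and its measurable stratum decomposition. The key mechanism is \lref{DCT-seq-constr}: at each Lebesgue density point $x$ of some $\ga^{-1}(B_i)$ with $\dot\ga(x)\in\T B_i$, one picks a near-optimal fibre point and stratum and lifts $\ga$ locally in charts; a Vitali covering argument then produces, for each $n$, finitely many disjoint intervals covering all but measure $O(1/n)$ of $[0,1]$, on each of which the local lift satisfies $\h g(\dot{\tld\ga}_n)^{1/2}-\h{\mc L}(\dot\ga)^{1/2}<2/n$ on a set of relative density $\ge 1-1/n$. The leftover set (of small measure) is not a single fibre, so it cannot be bridged by zero-length fibre paths as you suggest; instead the paper invokes \lref{lifting-lem} to produce \emph{some} piecewise-$C^1$ lift of $\ga$ there with specified endpoints. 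The resulting $\tld\ga_n$ then satisfy $\h g(\dot{\tld\ga}_n)^{1/2}\to\h{\mc L}(\dot\ga)^{1/2}$ in measure, and the {\it a priori} bound of \lref{DCT-bd} (uniform for all lifts of $\ga$) lets one apply dominated convergence to conclude $\ell_{\h g}(\tld\ga_n)\to\ell_{\h{\mc L}}(\ga)$. In short, your horizontal-lift idea on the ``good'' pieces is correct, but the passage from local good pieces to a global lift is handled measure-theoretically (Vitali, convergence in measure, DCT) rather than by any prior simplification of $\si$.
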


The proof proceeds by a series of lemmas.

\begin{Lem}[`{\it A priori} bound']\label{DCT-bd}
Let \(\pi: E \to B\) be as in \ntref{cvgce-nt} and let \(\ga\) be a piecewise-\(C^1\) path in \(B\).  There exists a constant \(C>0\), depending only on \(\ga\), such that for all piecewise-\(C^1\) lifts \(\tld{\ga}\) of \(\ga\) along \(\pi\):
\ew
\h{g}\lt(\dot{\tld{\ga}}\rt) \le C \text{ almost everywhere}.
\eew
\end{Lem}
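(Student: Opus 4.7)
The plan is to bound \(\h{g}\) stratum by stratum via a comparison with the pullback \(\pi^*h\) of an auxiliary Riemannian metric \(h\) on \(B\).  First I would fix such a continuous orbifold Riemannian metric \(h\) and note that \(\pi^*h\) is a continuous orbifold Riemannian semi-metric on \(E\) whose kernel is exactly \(\mc{D} = \ker\dd\pi\).  By regularity of \(\h{g}\) with respect to \(\mc{D}\), the continuous extension \(\ol{g}_j\) of each stratum-wise piece \(g_j\) supplied by \dref{sRsm} also has kernel exactly \(\mc{D}\).  Both bilinear forms therefore vanish on \(\mc{D}\) and are positive-definite transverse to \(\mc{D}\), so by the same argument used at the end of the proof of \pref{strat-well-def} there exist continuous functions \(D_j: E \to (0,\0)\) satisfying \(\ol{g}_j \le D_j \cdot \pi^*h\) on all of \(E\).

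Next I would exploit properness of \(\pi\) to convert these pointwise bounds into uniform ones along \(\tld{\ga}\).  Set \(K := \pi^{-1}\lt(\ga\lt([a,b]\rt)\rt)\), which is compact, and define
\ew
C := \lt(\max_j \sup_{K} D_j\rt) \cdot \sup_{s \in [a,b]} h\lt(\dot{\ga}(s)\rt) + 1.
\eew
The first factor is finite by continuity of each \(D_j\) together with compactness of \(K\) and the finiteness of the stratification on \(E\); the second is finite because \(\ga\) is piecewise-\(C^1\) on the compact interval \([a,b]\).  By construction, \(C\) depends only on \(\ga\).

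Finally, given any piecewise-\(C^1\) lift \(\tld{\ga}\), let \(I_j = \tld{\ga}^{-1}\lt(E_j\rt)\).  At almost every \(t \in I_j\), the tangent vector \(\dot{\tld{\ga}}(t)\) lies in \(\T_{\tld{\ga}(t)} E_j\) (invoking \cite[Lem.\ 3.1.7]{GMT} exactly as elsewhere in the paper), so the extendibility condition yields \(g_j\lt(\dot{\tld{\ga}}(t)\rt) = \ol{g}_j\lt(\dot{\tld{\ga}}(t)\rt)\) and the stratum-wise bound gives
\ew
\h{g}\lt(\dot{\tld{\ga}}(t)\rt) \le D_j\lt(\tld{\ga}(t)\rt) \cdot \lt(\pi^*h\rt)\lt(\dot{\tld{\ga}}(t)\rt) = D_j\lt(\tld{\ga}(t)\rt) \cdot h\lt(\dot{\ga}(t)\rt) \le C,
\eew
as required.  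The substance of the argument is really concentrated in the first step: the inequality \(\ol{g}_j \le D_j \cdot \pi^*h\) is the whole point, and it only holds because regularity of \(\h{g}\) with respect to \(\mc{D}\) rules out any `horizontal' component of \(\ol{g}_j\) along the fibres of \(\pi\) which \(\pi^*h\) would be unable to control; the subsequent steps amount to bookkeeping over the strata combined with the compactness afforded by properness of \(\pi\).
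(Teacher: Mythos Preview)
Your proof is correct and follows essentially the same route as the paper: both compare each \(\ol{g}_j\) to \(\pi^*h\) via a continuous comparison function (your \(D_j:E\to(0,\infty)\), the paper's pushed-down \(c:B\to(0,\infty)\)), then use properness of \(\pi\) together with compactness of the domain of \(\ga\) to extract a uniform constant.  The only cosmetic difference is that you take the supremum of \(D_j\) over \(K=\pi^{-1}(\ga([a,b]))\) directly, whereas the paper first descends the bound to a function on \(B\) and then takes the supremum over \(\ga([a,b])\); these are equivalent.
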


\begin{proof}
The argument is similar to the proof of the general case in \pref{strat-well-def}.  By considering each \(C^1\) portion of \(\ga\) separately, \wlg\ assume that \(\ga\) is \(C^1\).  Let \(h\) be a Riemannian metric on \(B\) and recall that there is a continuous map \(c:B \to (0,\0)\) such that, for each stratum \(E_j\) of \(E\), writing \(\overline{g}_j\) for the continuous extension of \(g_j\) to all of \(E\):
\ew
\overline{g}_j|_e \le c(\pi(e))\pi^*h|_e \text{ for all } e \in E.
\eew
Thus for all \(t\) in the domain of definition of \(\ga\):
\ew
\overline{g}_j\lt(\dot{\tld{\ga}}\rt)|_{\tld{\ga}(t)} \le c(\ga(t)) h\lt(\dot{\ga}\rt)|_{\ga(t)}.
\eew
Since the domain of definition of \(\ga\) is compact, the right-hand side may be bounded uniformly above by some \(C > 0\) depending only on \(\ga\) and the arbitrary \rmm\ \(h\).  The result follows.

\end{proof}

\begin{Lem}[`Piecewise-\(C^1\) Lifts with Specified Endpoints']\label{lifting-lem}
Let \(\ga:[0,1] \to B\) be a \(C^1\) path in \(B\), let \(b_i = \ga(i)\) and let \(e_i \in \pi^{-1}(b_i)\) for \(i = 0,1\) respectively.  Then there exists a piecewise-\(C^1\) \(\tld{\ga}: [0,1] \to E\) such that \(\pi\lt(\tld{\ga}\rt) = \ga\) and \(\tld{\ga}(i) = e_i\) for \(i = 0,1\) respectively.
\end{Lem}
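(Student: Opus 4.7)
The strategy is to first construct a piecewise-\(C^1\) lift of \(\ga\) starting at \(e_0\) by iterating a local lifting construction, and then to correct its (possibly incorrect) endpoint by exploiting the path-connectedness of the fibres of \(\pi\).  The key input is \pref{EIFT}, which provides a local product decomposition \(\bb{K} \x \bb{R}^{n_B}\) in weak submersion charts.

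For the local lifting, \pref{EIFT} gives, about any \(e \in \pi^{-1}(b)\), weak submersion charts in which \(\tld{\pi}\) becomes the canonical projection \(\bb{K} \x \bb{R}^{n_B} \to \bb{R}^{n_B}\), equivariant for the \(\Ga_e\)-action through \(\ka_\pi: \Ga_e \to \Ga_b\).  A \(C^1\) path \(\ga\) starting at \(b\) and lying inside \(U_b\) can be lifted to a \(C^1\) path \(\ol{\ga}\) in \(\tld{U}_b\) with any prescribed initial lift; thereafter, any \(C^1\) curve \(k: [0,1] \to \bb{K}\) yields a \(C^1\) lift \(\tld{\ga}(t) = \tld{\ch}_e\lt[\lt(k(t), \ol{\ga}(t)\rt)\rt]\), and the \(\bb{K}\)-component \(k(t)\) is a free parameter (subject only to the initial condition).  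By compactness of \(\ga([0,1]) \cc B\), cover \(\ga([0,1])\) by finitely many such charts \(U_{b_1}, \dots, U_{b_n}\) and choose a subdivision \(0 = t_0 < \cdots < t_n = 1\) with \(\ga\lt([t_{k-1}, t_k]\rt) \cc U_{b_k}\).  Lifting subinterval by subinterval, starting from \(e_0\), produces a piecewise-\(C^1\) lift \(\tld{\ga}_0:[0,1] \to E\) with \(\tld{\ga}_0(0) = e_0\) and \(\tld{\ga}_0(1) = e_1' \in \pi^{-1}(b_1)\) for some generally incorrect endpoint.

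To correct the endpoint, I invoke the path-connectedness of \(\pi^{-1}(b_1)\) from \ntref{cvgce-nt} to obtain a continuous, and after a standard chart-by-chart approximation also piecewise-\(C^1\), path \(\si:[0,1] \to \pi^{-1}(b_1)\) from \(e_1'\) to \(e_1\).  After further refining the partition so that each sub-piece of \(\si\) lies in a single weak submersion chart about \(b_1\), and each corresponding sub-piece of \(\ga|_{[t_{n-1},1]}\) lies in the associated base chart, I use the freedom in the \(\bb{K}\)-component of local lifts to replace \(\tld{\ga}_0|_{[t_{n-1}, 1]}\) segment by segment: on each segment the projection is constrained to be the corresponding piece of \(\ga\), while the \(\bb{K}\)-component is chosen so that its endpoint matches the chart coordinate of the corresponding point on \(\si\).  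Concatenating the modified segments with the unmodified \(\tld{\ga}_0|_{[0, t_{n-1}]}\) yields the desired lift \(\tld{\ga}\), with \(\tld{\ga}(0) = e_0\), \(\pi \circ \tld{\ga} = \ga\), and \(\tld{\ga}(1) = e_1\).

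The main obstacle lies in the endpoint-correction step.  Naive concatenation of \(\si\) onto the end of \(\tld{\ga}_0\) does not yield a lift of \(\ga\), since the projection of the concatenation ceases to agree with \(\ga\) on the final interval.  The absorption procedure described above circumvents this, but relies critically on the local product structure \(\bb{K} \x \bb{R}^{n_B}\) from \pref{EIFT}, which decouples the base and fibre directions in weak submersion charts and thereby allows the base motion \(\ga\) and the fibre motion \(\si\) to be realised simultaneously via independent choices of the \(\bb{R}^{n_B}\)- and \(\bb{K}\)-components of the lift.
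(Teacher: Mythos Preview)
Your proposal is correct and follows essentially the same two-stage strategy as the paper: first build a piecewise-\(C^1\) lift starting at \(e_0\) by iterating the local product description from \pref{EIFT}, then deform the endpoint along a fibre path \(\si\) from \(e_1'\) to \(e_1\) by exploiting the free \(\bb{K}\)-component in weak submersion charts. One small point worth tightening: in your first stage you fix finitely many \emph{base} charts \(U_{b_k}\) in advance, but the relevant \(E\)-chart at each step is centred at the endpoint of the previous lift and may force a smaller base neighbourhood than the preselected \(U_{b_k}\); the paper handles this by choosing the next chart inductively (centred at the current endpoint in \(E\)) and invoking compactness of \([0,1]\) together with properness of \(\pi\) to guarantee termination, which is the cleaner way to organise the argument.
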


\begin{Rk}
Note that the ability to lift paths along \(\pi\) is a non-trivial result since, as explained in \exref{FibrEx}, \(\pi\) need not even be a Serre fibration.  In particular, \lref{lifting-lem} does not appear to follow from any known result in the literature.
\end{Rk}

\begin{proof}
Firstly, I claim that there exists a piecewise-\(C^1\) lift \(\h{\ga}\) of \(\ga\) along \(\pi\) satisfying \(\h{\ga}(0) = e_0\).  Indeed, by applying \pref{EIFT}, one can choose a chart \(\Xi_{e_0} = (U_{e_0}, \Ga_{e_0}, \bb{A}_1 \x \bb{A}_2, \ch_{e_0})\) about \(e_0\) in \(E\), a chart \(\Xi_{b_0} = (U_{b_0}, \Ga_{b_0}, \bb{A}_2, \ch_{b_0})\) about \(b_0\) in \(B\) and a homomorphism \(\ka_\pi: \Ga_{e_0} \to \Ga_{b_0}\) such that \(\pi\) may be locally lifted as:
\ew
\bcd[row sep = -2mm, column sep = 20mm]
\underbrace{\bb{A}_1 \x \bb{A}_2}_{\circlearrowleft} \ar[r, " \tld{\pi} = proj "] & \underbrace{\bb{A}_2}_{\circlearrowleft}\\
\Ga_{e_0} \ar[r, " \ka_\pi "'] & \Ga_{b_0}
\ecd
\eew
and where \(\Ga_{e_0}\) acts on \(\bb{A}_2\) via \(\ka_\pi\).  Let \(\overline{\ga}:[0,\ep) \to \bb{A}_2\) for some \(\ep > 0\) be a local representation of \(\ga\) in the chart \(\Xi_{b_0}\) (note that the equivariance of this local representation is trivial, since \([0,\ep)\) is a manifold and so its orbifold group about each point vanishes).  Lift \(\overline{\ga}\) to a map \([0,\ep) \to \bb{A}_1 \x \bb{A}_2\) as:
\ew
0 \x \overline{\ga} : [0,\ep) \to \bb{A}_1 \x \bb{A}_2.
\eew
Under the projection \(\bb{A}_1 \x \bb{A}_2 \oto{proj} \lqt{\bb{A}_1 \x \bb{A}_2}{\Ga_{e_0}}\), the map \(0 \x \overline{\ga}\) defines a \(C^1\) lift \(\h{\ga}\) of \(\ga\) along \(\pi\) on the interval \([0,\ep)\).  To obtain a piecewise-\(C^1\) lift of the path \(\ga\) over the whole interval \([0,1]\), one then proceeds inductively, repeating the above process starting from some point \(\ep' \in [0, \ep)\); the inductive process can be made to terminate in finite time by compactness of \([0,1]\) and properness of the map \(\pi\).

Next, I claim that one may `improve' the lift from \(\h{\ga}\) to a lift \(\tld{\ga}\) such that \(\tld{\ga}(b_i) = e_i\) for both \(i = 0 \) and \(i = 1\).  To this end, define \(e_1' = \h{\ga}(1)\).  Since \(\pi\) has path connected fibres, one may choose a piecewise-\(C^1\) path \(\si:[0,1] \to \pi^{-1}(b_1)\) such that \(\si(0) = e_1'\) and \(\si(1) = e_1\).  The task is to deform the endpoint of \(\h{\ga}\) along the path \(\si\).

The argument is very similar to the construction of \(\h{\ga}\).  Initially, one chooses charts \(\Xi_{e_1'} = (U_{e_i'}, \Ga_{e_i'}, \bb{B}_1 \x \bb{B}_2, \ch_{e_1'})\) about \(e_1'\) in \(E\) and \(\Xi_{b_1} = (U_{b_1}, \Ga_{b_1}, \bb{B}_2, \ch_{b_1})\) about \(b_1\) in \(B\) and a homomorphism \(\ka_\pi: \Ga_{e_i'} \to \Ga_{b_1}\) such that \(\pi\) can be written in the local form:
\ew
\bcd[row sep = -2mm, column sep = 20mm]
\underbrace{\bb{B}_1 \x \bb{B}_2}_{\circlearrowleft} \ar[r, " \tld{\pi} = proj "] & \underbrace{\bb{B}_2}_{\circlearrowleft}\\
\Ga_{e_1'} \ar[r, " \ka_\pi "'] & \Ga_{b_1}
\ecd
\eew
Since \(\h{\ga}\) is continuous, there exists \(\ep \in (0,1)\) such that \(\h{\ga}((\ep,1]) \pc U_{e_1'}\).  Then choose a local representation \(\overline{\h{\ga}} = \lt(\overline{\h{\ga}}_1,\overline{\h{\ga}}_2\rt): (\ep, 1] \to \bb{B}_1 \x \bb{B}_2\) (so that \(\overline{\h{\ga}}_1(1) = 0\)).  Now choose \(t >0\) such that \(\si(t) \in U_{e_1'}\) and lift the point \(\si(t)\) to some preimage \(\lt(\overline{\si(t)},0\rt)\) under the projection map \(\bb{B}_1 \x \bb{B}_2 \oto{proj} \lqt{\bb{B}_1 \x \bb{B}_2}{\Ga_{e_1'}}\).  By altering the function \(\overline{\h{\ga}}_1:(\ep,1] \to \bb{B}_1\) on some compact subset of \((\ep,1]\), one can ensure that \(\overline{\h{\ga}}_1(1) = \overline{\si(t)}\).  Denote the new local representation \(\lt(\overline{\h{\ga}}_1,\overline{\h{\ga}}_2\rt)\) by \(\overline{\h{\ga}}'\).  Projecting \(\overline{\h{\ga}}'\) under the map:
\ew
\bb{B}_1 \x \bb{B}_2 \oto{proj} \lqt{\bb{B}_1 \x \bb{B}_2}{\Ga_{e_1'}}
\eew
yields a new lift \(\h{\ga}'\) covering \(\ga\) with the property that \(\h{\ga}'(1) = \si(t)\).  Now iterate this argument, noting again that the process can be made to terminate in finite time by compactness of the domain of definition of \(\si\).

\end{proof}

\begin{Lem}[`Convergence in Measure']\label{DCT-seq-constr}
Let \(\ga: [0,1] \to B\) be a piecewise-\(C^1\) path.  Then there exists a sequence of piecewise-\(C^1\) lifts \(\tld{\ga}_n\) of \(\ga\) along \(\pi\) such that:
\ew
\h{g}\lt(\dot{\tld{\ga}}_n\rt)^\frac{1}{2} \to \h{\mc{L}}\lt(\dot{\ga}\rt)^\frac{1}{2} ~ \text{in measure as } n \to \0.
\eew
Moreover, the endpoints of the lift, viz.\ \(\tld{\ga}_n(0)\) and \(\tld{\ga}_n(1)\), may be chosen to be any points in \(\pi^{-1}(\ga(0))\) and \(\pi^{-1}(\ga(1))\) respectively.
\end{Lem}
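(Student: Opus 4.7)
The strategy is to build $\tld{\ga}_n$ by patching together horizontal lifts of $\ga$ through carefully chosen fibre points which nearly realise the pointwise infimum defining $\h{\mc{L}}(\dot{\ga})$, and then using \lref{lifting-lem} to glue the pieces together and to enforce the endpoint condition. By concatenating $C^1$ pieces, it suffices to treat the case where $\ga$ is $C^1$. Throughout, I fix an auxiliary Riemannian metric on $E$ and write $u_x$ for the horizontal lift of $u \in \T_{\pi(x)}B$ at $x$, as in \eref{mcL-2}.

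Fix $\epsilon > 0$. Since $\ga$ is Lipschitz and each stratum $B_i$ is locally closed, the same argument recalled in the definition of $\ell^{\h{g}}$ shows that $\dot{\ga}$ lies in $\T B_i$ almost everywhere on $\ga^{-1}(B_i)$. Lusin's theorem then yields, for each stratum $B_i$, a compact subset $K_i \pc \ga^{-1}(B_i)$ on which $\dot{\ga}$ is continuous with values in $\T B_i$, and with $\lt|[0,1] \osr \bigcup_i K_i\rt| < \epsilon$. For each $t \in K_i$, the definition of $\mc{L}_i$ provides a stratum $E_{j(t)}$ of $E$ lying over $B_i$ and a point $x_t \in E_{j(t)} \cap \pi^{-1}(\ga(t))$ with $\|u_{x_t}\|_{\ol{g}_{j(t)}} \le \mc{L}_i(\dot{\ga}(t)) + \epsilon$. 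Properness of $\pi$, continuity of $\ol{g}_j$ and of $\dot{\ga}|_{K_i}$, and the local triviality provided by \pref{loc-triv}, then permit one to replace this pointwise data by a \emph{finite} collection of representatives $\tau_m \in K_i$, strata $E_{j(m)}$ and points $x_m$, together with open intervals $V_m \ni \tau_m$, such that horizontally transporting $x_m$ along $\ga|_{V_m}$ produces a $C^1$ lift on $V_m$ which, by \rref{strat-rk}, remains in $E_{j(m)}$ throughout, and whose $\h{g}$-length density agrees with $\mc{L}_i(\dot{\ga})^2$ to within $O(\epsilon)$ on $V_m \cap K_i$.

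Passing to a finite subcover of $\bigcup_i K_i$ by the $V_m$'s and shrinking each $V_m$ slightly, one obtains disjoint sub-intervals $J_1, \dots, J_L \pc [0,1]$ carrying these horizontal lifts, with complementary transition intervals of total measure $O(\epsilon)$. On each transition interval, \lref{lifting-lem} supplies a piecewise-$C^1$ lift of $\ga$ joining the endpoints of the adjacent horizontal lifts; at the first and last transition intervals, the required endpoints $e_0, e_1$ are imposed via the same lemma. By the a priori bound of \lref{DCT-bd}, $\h{g}(\dot{\tld{\ga}}_\epsilon)^\frac{1}{2}$ is uniformly bounded on the transition intervals (independently of $\epsilon$), so outside a set of measure $O(\epsilon)$ the quantity $\lt|\h{g}(\dot{\tld{\ga}}_\epsilon)^\frac{1}{2} - \h{\mc{L}}(\dot{\ga})^\frac{1}{2}\rt|$ is of order $\sqrt{\epsilon}$. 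Setting $\epsilon = 1/n$ produces the desired sequence $\tld{\ga}_n$.

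The chief obstacle is the interaction between the fibrewise infimum defining $\h{\mc{L}}$ and the stratification of $E$: the stratum realising the infimum can vary arbitrarily with $t$, so no single horizontal lift of $\ga$ can match $\h{\mc{L}}(\dot{\ga})$ along all of $[0,1]$. The transitions furnished by \lref{lifting-lem} -- which connect distinct strata of $E$ within a single fibre of $\pi$ -- are precisely what allow the lift to switch near-infimiser branches at the cost of short detours of negligible total measure. The accompanying measurability issue (that the pointwise choice $t \mapsto x_t$ is only measurable, not continuous) is why a Lusin-style reduction to finitely many representatives is needed before patching the horizontal lifts together.
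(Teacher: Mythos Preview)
Your overall strategy---Lusin's theorem to reduce to finitely many near-infimising local lifts, then patch the pieces via \lref{lifting-lem}---is a legitimate alternative to the paper's approach, which instead selects the intervals using Lebesgue density points together with the Vitali Covering Theorem. Both routes produce a finite family of disjoint intervals covering most of $[0,1]$ on which a controlled lift exists; your Lusin route is arguably more direct and avoids the density bookkeeping.

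There is, however, a genuine gap in your construction of the lift on each $V_m$. You appeal to ``horizontal transport of $x_m$ along $\ga|_{V_m}$'' and cite \pref{loc-triv} and \rref{strat-rk} for its existence and for the claim that the lift remains in $E_{j(m)}$ throughout. But \pref{loc-triv} applies only to submersions; for the weak submersion $\pi$ it gives horizontal transport only along curves lying entirely in a single base stratum $B_i$ (via the submersion $\pi^{-1}(B_i) \to B_i$ of \cref{ind-strat}), and \rref{strat-rk} likewise concerns only that restricted setting. Since $K_i$ is merely a compact subset of the locally closed set $\ga^{-1}(B_i)$, the open interval $V_m$ need not satisfy $\ga(V_m) \pc B_i$: the curve may leave and re-enter $B_i$ arbitrarily often within $V_m$. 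Once $\ga$ exits $B_i$, neither the existence of horizontal transport nor the stratum-preservation claim is available, and even at points of $V_m \cap K_i$ reached after such an excursion the lift could land in a stratum of $\pi^{-1}(B_i)$ other than $E_{j(m)}$, spoiling your estimate. The paper sidesteps this by constructing the lift explicitly in local charts as $0 \x \overline{\ga}$ (not via any connection): this lift is defined on the whole interval regardless of which strata $\ga$ visits, and has the key property that whenever $\ga(t) \in B_i$ the lift automatically lies in the chosen stratum $E_j$, simply because $[0] \x \bb{I}$ sits in that stratum in the chart. Your argument can be repaired by substituting this chart-based lift for horizontal transport; the rest of your outline then goes through.
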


\begin{proof}
Firstly, note that it suffices to lift each \(C^1\) portion of \(\ga\) separately to a piecewise-\(C^1\) curve in \(E\) and then use the freedom in specifying the endpoints of these separate lifts to ensure that the combined lift of \(\ga\) is continuous over the non-differentiable points of \(\ga\).  Thus \wlg\ one may assume that \(\ga\) is everywhere \(C^1\).

Write \(I = [0,1]\).  For each stratum \(B_i\) of \(B\), recall the measurable subset:
\ew
I_i = \ga^{-1}(B_i) \cc I.
\eew
Write \(\cal{L}\) for the Lebesgue measure on \(I\) and define:
\ew
\dot{I}_i = \lt\{x \in I_i \cap (0,1) ~\m|~ \lim_{r \to 0} \frac{\cal{L}\lt[I_i \cap (x - r,x + r)\rt]}{\cal{L}\lt[(x-r,x+r)\rt]} = 1\rt\} \cc I_i \cap (0,1).
\eew
By \cite[Cor.\ 2.9, p.\ 20]{GMT:ABG}, \(\cal{L}\lt(I_i \m\osr \dot{I}_i\rt) = 0\).  Define:
\ew
\tld{I}_i = \lt\{ x \in \dot{I_i} ~\m|~ \dot{\ga} \in T B_i \rt\}.
\eew
Then by \cite[Lem.\ 3.1.7, p.\ 217]{GMT}, \(\cal{L}\lt(I_i \m\osr \tld{I}_i\rt) = 0\).  For notational convenience later in the proof, define \(\tld{I} = \bigcup_{i} \tld{I}_i\).

Fix \(n \ge 1\).  For each \(i\) and each \(x \in \tld{I}_i\), choose \(j = j(x,n)\) and \(y = y(x,n) \in E_j \cc \pi^{-1}(B_i)\) such that for all lifts \(u\) of \(\dot{\ga}(x)\) to \(\T_y E_j\) along \(\dd\pi\):
\ew
g_j(u)^\frac{1}{2} - \mc{L}_i\lt(\dot{\ga}(x)\rt)^\frac{1}{2} < \frac{1}{n}.
\eew
(Note that the left-hand side is automatically non-negative, by definition of \(\mc{L}_i\).)  Choose a chart \(\Xi_x = (U_x,\Ga_x, \tld{U}_x, \ch_x)\) about \(\ga(x) \in B\) which is regular for the submanifold \(B_i\), with regular subspace \(\bb{I}\) (see \dref{suborb}):
\ew
\tld{U}_x = \bb{A}_1 \x \bb{I} \to \lqt{\bb{A}_1}{\Ga_x} \x \bb{I} \underbrace{\cong}_{\ch_x} U_x.
\eew
(Here, \(\bb{A}_1\) and \(\bb{I}\) are finite-dimensional real vector spaces and \(\Ga_x\) acts on \(\bb{A}_1\).)  By \pref{EIFT}, shrinking \(U_x\) if necessary one can choose a chart \(\Xi_y = \lt(U_y, \Ga_y, \bb{A}_2 \x \bb{A}_1 \x \bb{I}, \ch_y\rt)\) and a homomorphism \(\ka_\pi: \Ga_y \to \Ga_x\) such that \(\pi\) may be expressed in the charts \(\Xi_y\) and \(\Xi_x\) as the projection map:
\e\label{loc-rep-4-pi}
\bcd[row sep = -2mm, column sep = 20mm]
\underbrace{\bb{A}_2 \x \bb{A}_1}_{\circlearrowleft} \x \bb{I} \ar[r, " \tld{\pi} = proj "] & \underbrace{\bb{A}_1}_{\circlearrowleft} \x \bb{I}\\
\Ga_y \hs{3mm} \ar[r, " \ka_\pi "'] & \Ga_x \hs{3mm}
\ecd
\ee

Since \(x \in \tld{I}_i \cc (0,1)\), and \(\ga\) is continuous, one can choose \(\eta(x,n) > 0\) such that \((x - \eta(x,n), x + \eta(x,n)) \cc (0,1)\) and:
\ew
\ga\lt[ (x - \eta(x,n), x + \eta(x,n)) \rt] \pc U_x \cong \lt(\lqt{\bb{A}_1}{\Ga_x}\rt) \x \bb{I}.
\eew
Since \(x \in \tld{I}_i \pc \dot{I}_i\), one has:
\ew
\lim_{r \to 0} \frac{\cal{L}\lt[I_i \cap (x - r,x + r)\rt]}{\cal{L}\lt[(x-r,x+r)\rt]} = \lim_{r \to 0} \frac{\cal{L}\lt[\tld{I}_i \cap (x - r,x + r)\rt]}{\cal{L}\lt[(x-r,x+r)\rt]} = 1
\eew
(where the first equality follows from the fact that \(\cal{L}\lt(I_i \m\osr \tld{I}_i\rt) = 0\)) and so, by reducing \(\eta(x,n) > 0\) if necessary, one may assume that for all \(0 < r \le \eta(x,n)\):
\e\label{density-close-to-1}
\cal{L}\lt[\tld{I}_i \cap (x - r,x + r)\rt] \ge \frac{n-1}{n}\cal{L}\lt[(x-r,x+r)\rt] = 2r\frac{n-1}{n}.
\ee

Since \(\ga\) defines a smooth map from the manifold \(I\) to the orbifold \(B\), by reducing \(\eta(x,n)>0\) still further if necessary, one may assume that \(\ga\) has a local lift \(\overline{\ga}\) \wrt\ the coordinate charts \((x - \eta(x,n), x + \eta(x,n))\) and \(\Xi_x\):
\ew
\bcd
& \bb{A}_1 \x \bb{I} \ar[d]\\
(x - \eta(x,n), x + \eta(x,n)) \ar[ur, " \mLa{\overline{\ga}} "] \ar[r, " \mLa{\ga} "] & \lqt{\bb{A}_1}{\Ga_x} \x \bb{I} \cong U_x
\ecd
\eew
(Note that the equivariance of the lift \(\overline{\ga}\) is vacuous, since \(I\) is a manifold and so has trivial orbifold groups about every point.)  Using the local representation of \(\pi\) given in \eref{loc-rep-4-pi}, one can lift \(\overline{\ga}\) to the map \(0 \x \overline{\ga}\) as below:
\ew
\bcd
& \bb{A}_2 \x \bb{A}_1 \x \bb{I} \ar[d, " \tld{\pi} = proj "]\\
& \bb{A}_1 \x \bb{I} \ar[d]\\
(x - \eta(x,n), x + \eta(x,n)) \ar[ur, " \mLa{\overline{\ga}} "] \ar[uur, " \mLa{0 \x \overline{\ga}} "] \ar[r, " \mLa{\ga} "] & \lqt{\bb{A}_1}{\Ga_x} \x \bb{I} \cong U_x
\ecd
\eew
Projecting \(0 \x \overline{\ga}\) via \(\bb{A}_2 \x \bb{A}_1 \x \bb{I} \to \lt(\lqt{\bb{A}_2 \x \bb{A}_1}{\Ga_y}\rt) \x \bb{I}\) defines a local lift (\(C^1\)) of \(\ga\) along \(\pi\) over the region \((x - \eta(x,n), x + \eta(x,n))\); denote this lift by \(\tld{\ga}(x,n)\).

Note that on the region \(I_i \cap (x - \eta(x,n), x + \eta(x,n))\) (where \(\ga \in B_i\)) the curve \(\tld{\ga}(x,n)\) lies in \(E_j\).  Indeed:
\ew
\tld{\ga}(x,n)|_{I_i \cap (x - \eta(x,n), x + \eta(x,n))} \pc [0] \x \bb{I} \cc \lt(\lqt{\bb{A}_2 \x \bb{A}_1}{\Ga_y}\rt) \x \bb{I}
\eew
and one may verify that \([0] \x \bb{I}\) lies in the stratum \(E_j\).  Hence by \cite[Lem.\ 3.1.7, p.\ 217]{GMT}, \(g_j\lt(\dot{\tld{\ga}}(x,n)_t\rt)\) is well-defined for almost every \(t \in I_i \cap (x - \eta(x,n), x + \eta(x,n))\) and thus for almost every \(t \in \tld{I}_i \cap (x - \eta(x,n), x + \eta(x,n))\).  Now consider the function:
\ew
g_j\lt(\dot{\tld{\ga}}(x,n)\rt)^\frac{1}{2} - \mc{L}_i\lt(\dot{\ga}\rt)^\frac{1}{2} \text{ where defined on } \tld{I}_i \cap (x - \eta(x,n), x + \eta(x,n)).
\eew
This is a continuous, non-negative map which is less than \(\frac{1}{n}\) at the point \(x\).  Therefore, by reducing \(\eta(x,n) > 0\) if necessary, one may ensure that:
\e\label{close-to-opt}
g_j\lt(\dot{\tld{\ga}}(x,n)\rt)^\frac{1}{2} - \mc{L}_i\lt(\dot{\ga}\rt)^\frac{1}{2} < \frac{2}{n} \text{ almost everywhere on } \tld{I}_i \cap (x - \eta(x,n), x + \eta(x,n)).
\ee

Now consider the collection of subsets of \(I\) given by:
\ew
\mc{S}_n = \lt\{ (x - r, x + r) ~\m|~ x \in \tld{I}, r \in (0,\eta(x,n)) \rt\}.
\eew
By applying the Vitali Covering Theorem \cite[Thm.\ 2.2, p.\ 26]{GoS&MiES:F&R}, there exist \(x_p \in \tld{I}\) and \(r_p \in (0,\eta(x_p,n))\), for \(p \in \bb{N}\), such that:
\begin{itemize}
\item The sets \(\lt\{(x_p - r_p, x_p + r_p)\rt\}_{p \in \bb{N}}\) are disjoint;
\item \(\cal{L}\lt[ I \m\osr \bigcup_{p \in \bb{N}} (x_p - r_p, x_p + r_p) \rt] = \cal{L}\lt[ \tld{I} \m\osr \bigcup_{p \in \bb{N}} (x_p - r_p, x_p + r_p) \rt] = 0\).
\end{itemize}
Choose \(N = N(n)\) sufficiently large such that:
\e\label{almost-cover}
\cal{L}\lt[ I \m\osr \bigcup_{p=0}^{N(n)} (x_p - r_p, x_p + r_p) \rt] < \frac{1}{n}.
\ee

Now construct the lift \(\tld{\ga}_n\) as follows:
\begin{itemize}
\item On each set \(\lt(x_p - \frac{n-1}{n}r_p, x_p + \frac{n-1}{n}r_p\rt)\), \(p = 0,...,N(n)\), define:
\ew
\tld{\ga}_n = \tld{\ga}(x_p,n)|_{\lt(x_p - \frac{n-1}{n}r_p, x_p + \frac{n-1}{n}r_p\rt)}.
\eew

\item Since the open sets \(\lt\{(x_p - r_p, x_p + r_p)\rt\}_{p \in \{0,...,N(n)\}}\) are disjoint, the complement of the union of the smaller open sets \(\lt\{\lt(x_p - \frac{n-1}{n}r_p, x_p + \frac{n-1}{n}r_p\rt)\rt\}_{p \in \{0,...,N(n)\}}\) is a finite collection of closed intervals, including two intervals of the form \([0,\al]\) and \([\be,1]\).  On each of these closed intervals, use \lref{lifting-lem} to choose some piecewise-\(C^1\) lift of \(\ga\) along \(\pi\), with endpoints chosen so that the resulting lift \(\tld{\ga}_n\) is piecewise-\(C^1\) and so that \(\tld{\ga}_n(0)\), \(\tld{\ga}_n(1)\) take the required values in \(\pi^{-1}(\ga(0))\) and \(\pi^{-1}(\ga(1))\) respectively.
\end{itemize}

I now claim that:
\e\label{conv-in-meas}
\cal{L}\lt[ \lt\{ x \in I ~\m|~ \h{g}\lt(\dot{\tld{\ga}}_n\rt)^\frac{1}{2}|_x - \h{\mc{L}}\lt(\dot{\ga}\rt)^\frac{1}{2}|_x \ge \frac{2}{n} \rt\} \rt]< \frac{3}{n},
\ee
a result which would imply the convergence of the functions \(\h{g}\lt(\dot{\tld{\ga}}_n\rt)^\frac{1}{2} \to \h{\mc{L}}\lt(\dot{\ga}\rt)^\frac{1}{2}\) in measure.  To verify \eref{conv-in-meas}, for each \(x_p\) choose \(i(p)\) such that \(x_p \in \tld{I}_{i(p)}\) and recall from \eref{close-to-opt} that:
\ew
\h{g}\lt(\dot{\tld{\ga}}_n\rt)^\frac{1}{2} - \h{\mc{L}}\lt(\dot{\ga}\rt)^\frac{1}{2} < \frac{2}{n} \text{ almost everywhere on } \tld{I}_{i(p)} \cap \lt(x_p - \frac{n-1}{n}r_p, x_p + \frac{n-1}{n}r_p\rt).
\eew

Therefore:
\ew
\cal{L}\lt[ \lt\{ x \in I ~\m|~ \h{g}\lt(\dot{\tld{\ga}}_n\rt)^\frac{1}{2}|_x - \h{\mc{L}}\lt(\dot{\ga}\rt)^\frac{1}{2}|_x \ge \frac{2}{n} \rt\} \rt] &\le 1 - \cal{L} \lt[ \lt. \bigcup_{p=0}^{N(n)} \lt\{ \tld{I}_{i(p)} \cap \lt(x_p - \frac{n-1}{n}r_p, x_p + \frac{n-1}{n}r_p\rt) \rt\} \rt. \rt]\\
&= 1 - \sum_{p=0}^{N(n)} \cal{L} \lt[ \tld{I}_{i(p)} \cap \lt(x_p - \frac{n-1}{n}r_p, x_p + \frac{n-1}{n}r_p\rt) \rt],
\eew
where the final equality follows from the fact that the union is disjoint.  Now from \eref{density-close-to-1}, for each \(p\):
\ew
\cal{L} \lt[ \tld{I}_{i(p)} \cap \lt(x_p - \frac{n-1}{n}r_p, x_p + \frac{n-1}{n}r_p\rt) \rt] &\ge \frac{n-1}{n}\cal{L}\lt[ \lt(x_p - \frac{n-1}{n}r_p, x_p + \frac{n-1}{n}r_p\rt) \rt]\\
&= \lt(\frac{n-1}{n}\rt)^2\cal{L}\lt[ \lt(x_p - r_p, x_p + r_p\rt) \rt],
\eew
and therefore:
\ew
\sum_{p=0}^{N(n)} \cal{L} \lt[ \tld{I}_{i(p)} \cap \lt(x_p - \frac{n-1}{n}r_p, x_p + \frac{n-1}{n}r_p\rt) \rt] &\ge \lt(\frac{n-1}{n}\rt)^2 \sum_{p=0}^{N(n)} \cal{L} \lt[ \lt(x_p - r_p, x_p + r_p\rt) \rt]\\
&= \lt(\frac{n-1}{n}\rt)^2 \cal{L} \lt[ \bigcup_{p=0}^{N(n)} \lt(x_p - r_p, x_p + r_p\rt) \rt].
\eew
By \eref{almost-cover}:
\ew
\cal{L} \lt[ \bigcup_{p=0}^{N(n)} \lt(x_p - r_p, x_p + r_p\rt) \rt] > \frac{n-1}{n},
\eew
hence:
\ew
\sum_{p=0}^{N(n)} \cal{L} \lt[ \tld{I}_{i(p)} \cap \lt(x_p - \frac{n-1}{n}r_p, x_p + \frac{n-1}{n}r_p\rt) \rt] > \lt(\frac{n-1}{n}\rt)^3
\eew
and whence:
\ew
\cal{L}\lt[ \lt\{ x \in I ~\m|~ \h{g}\lt(\dot{\tld{\ga}}_n\rt)^\frac{1}{2}|_x - \h{\mc{L}}\lt(\dot{\ga}\rt)^\frac{1}{2}|_x \ge \frac{2}{n} \rt\} \rt] < 1 - \lt(\frac{n-1}{n}\rt)^3 \le \frac{3}{n}.
\eew
This completes the proof of \lref{DCT-seq-constr}.

\end{proof}

Using \lrefs{DCT-bd}, \ref{lifting-lem} and \ref{DCT-seq-constr}, I now prove \pref{FMS-Thm}:

\begin{proof}[Proof of \pref{FMS-Thm}]
By the definition of \(\h{\mc{L}}\), for all \(e,e' \in E\) and all piecewise-\(C^1\) paths \(\ga:e \to e'\):
\ew
\h{g}\lt(\dot{\ga}\rt) \ge \h{\mc{L}}\lt(\overset{\mb{\1}}{\pi(\ga)}\rt) ~ \text{almost everywhere},
\eew
and hence:
\ew
\ell_{\h{g}}(\ga) \ge \ell_{\h{\mc{L}}}(\pi(\ga)) \ge d^{\h{\mc{L}}}(\pi(e),\pi(e')).
\eew
Taking the infimum over all such \(\ga\) shows that:
\ew
d^{\h{g}}(e,e') \ge d^{\h{\mc{L}}}(\pi(e),\pi(e')).
\eew

Conversely, let \(\ga:[0,1] \to B\) be a piecewise-\(C^1\) path \(\pi(e) \to \pi(e')\).  By \lref{DCT-seq-constr}, there exists a sequence of piecewise-\(C^1\) lifts \(\tld{\ga}_n: e \to e'\) of \(\ga\) along \(\pi\) such that:
\ew
\h{g}\lt(\dot{\tld{\ga}}_n\rt)^\frac{1}{2} \to \h{\mc{L}}\lt(\dot{\ga}\rt)^\frac{1}{2} ~ \text{in measure as } n \to \0.
\eew
By the {\it a priori} bound in \lref{DCT-bd}, the Dominated Convergence Theorem applies, and so:
\ew
d^{\h{g}}(e,e') \le \lim_{n \to \0} \ell_{\h{g}}\lt(\tld{\ga}_n\rt) = \lim_{n \to \0} \int_{[0,1]} \h{g}\lt(\dot{\tld{\ga}}_n\rt)^\frac{1}{2} \dd\cal{L} = \int_{[0,1]} \h{\mc{L}}\lt(\dot{\ga}\rt) \dd\cal{L}^\frac{1}{2} = \ell_{\h{\mc{L}}}(\ga).
\eew
Taking the infimum over \(\ga\) completes the proof.

\end{proof}

\section{Proof of \tref{Cvgce-Thm-1-FV}: Part 2}

The purpose of this section is to complete the proof of \tref{Cvgce-Thm-1-FV}.  By applying \pref{2stage} and the results of \sref{Gen-Coll}, it suffices to prove the following result:

\begin{Prop}\label{SM-conv-thm}
Let notation be as in \ntref{cvgce-nt} and assume that conditions (i)--(iv) in \tref{Cvgce-Thm-1-FV} hold.  Then:
\ew
\fr{D}\lt[ \lt(E_1, d^\mu\rt) \to \lt(E_2, d^\0\rt) \rt] \to 0 \as \mu \to \0.
\eew
\end{Prop}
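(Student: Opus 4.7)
The plan is, for each sufficiently large \(\mu\), to construct a set-theoretic map \(f_\mu: E_1 \to E_2\) which is an \(\ep_\mu\)-isometry with \(\ep_\mu \to 0\) as \(\mu \to \0\); by definition of forwards discrepancy this then yields \pref{SM-conv-thm}. First I would apply a standard diagonal argument to conditions (i)--(iv) to pick a sequence \(r_\mu \to 0\) slowly enough that all of \(|1 - \La_\mu(r_\mu)|\), the uniform convergence error of \(\h{g}^\mu \to \h{g}^\0\) on \(E^{(r_\mu)}\), and the three \(\limsup\) suprema of (ii)--(iv) simultaneously tend to 0, and let \(\ep_\mu\) be their maximum.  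I then define \(f_\mu\) to coincide with \(\Ph\) on \(E_1 \osr U^{(r_\mu)}_1\), and for \(e \in U^{(r_\mu)}_1(j)\) to send \(e\) to any chosen point of \(\fr{f}_{2,j}^{-1}(\fr{f}_{1,j}(e)) \cap U^{(r_\mu)}_2(j)\) (non-empty at least for \(e \in U^{(r_\mu)}_1(j) \osr S_1(j)\) by the commutative diagram and \(\Ph\); an arbitrary set-theoretic choice suffices elsewhere).  The \(\ep_\mu\)-net property is then immediate from (iii), since any \(e' \in U^{(r_\mu)}_2(j)\) lies in the same \(\fr{f}_{2,j}\)-fibre as \(f_\mu(e)\) for any \(e \in \fr{f}_{1,j}^{-1}(\fr{f}_{2,j}(e')) \cap U^{(r_\mu)}_1(j)\), and this fibre has small \(d^\0\)-diameter.

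For the approximate preservation of distance, I would first reduce to the case \(e_1, e_2 \in E^{(r_\mu)} := E_1 \osr U^{(r_\mu)}_1\): using (ii) and (iii), any \(e_i \in U^{(r_\mu)}_1(j)\) can be replaced by a point on \(\del^{(r_\mu)}(j)\) at cost at most \(\ep_\mu\) in both metrics.  In the reduced case \(f_\mu(e_i) = \Ph(e_i)\), and I must show \(|d^\0(\Ph e_1, \Ph e_2) - d^\mu(e_1, e_2)| = O(\ep_\mu)\).  For the upper bound: pick a near-minimising piecewise-\(C^1\) \(d^\mu\)-path \(\ga: e_1 \to e_2\), and cut it at its crossings with \(\del^{(r_\mu)}\) into external segments in \(E^{(r_\mu)}\) and excursions in \(U^{(r_\mu)}_1\).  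Transport each external segment via \(\Ph\) to \(E_2\); by \(\h{g}^\mu \ge \La_\mu(r_\mu)^2\h{g}^\0\) on \(E^{(r_\mu)}\), its \(d^\0\)-length is bounded by \(\La_\mu(r_\mu)^{-1}\) times its \(d^\mu\)-length.  Replace each excursion from \(a_k\) to \(b_k\) by a near-optimal \(d^\0\)-geodesic from \(\Ph a_k\) to \(\Ph b_k\) in \(E_2\); by (iv), its \(d^\0\)-length is at most \(d^\mu(a_k, b_k) + \ep_\mu\), hence at most the \(d^\mu\)-length of the excursion segment plus \(\ep_\mu\).  Summing, \(d^\0(\Ph e_1, \Ph e_2) \le \La_\mu(r_\mu)^{-1}\ell^\mu(\ga) + N_\mu\ep_\mu\), where \(N_\mu\) is the number of excursions.

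For the reverse bound \(d^\mu(e_1, e_2) \le d^\0(\Ph e_1, \Ph e_2) + O(\ep_\mu)\), I would use Part 1 of \tref{Cvgce-Thm-1-FV} (proved in \sref{Gen-Coll}) to write \(d^\0(\Ph e_1, \Ph e_2) = d^{\h{\mc{L}}}(\pi\Ph e_1, \pi\Ph e_2)\), pick a near-minimising \(\h{\mc{L}}\)-path \(\al\) in \(B\), and invoke \lref{DCT-seq-constr} to construct piecewise-\(C^1\) lifts \(\tld\ga_n: \Ph e_1 \to \Ph e_2\) of \(\al\) along \(\pi\) whose \(\h{g}^\0\)-length converges to \(\ell^{\h{\mc{L}}}(\al)\).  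The Vitali-covering construction in the proof of \lref{DCT-seq-constr} produces lifts whose reference \(\h{h}\)-length is controlled by a constant multiple of \(\ell^h(\al)\); the pointwise inequality \(\|v\|^2_{\h{g}^\mu} \le \|v\|^2_{\h{g}^\0} + O(\ep_\mu)\|v\|^2_h\) furnished by uniform convergence in (i) then yields \(\ell^\mu(\Ph^{-1}(\tld\ga_n|_{E^{(r_\mu)}})) \le \ell^\0(\tld\ga_n|_{E^{(r_\mu)}}) + O(\sqrt{\ep_\mu})\), while excursions of \(\tld\ga_n\) through \(U^{(r_\mu)}_2\) are handled symmetrically via (iv).

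The main obstacle I anticipate is controlling the number of boundary crossings \(N_\mu\): a priori a near-minimiser may cross \(\del^{(r_\mu)}\) arbitrarily often, so the accumulated error \(N_\mu\ep_\mu\) need not be small.  I expect the argument to resolve this either by restricting to near-minimisers with a uniformly bounded number of excursions -- for instance, by coalescing consecutive excursions into the same \(U^{(r_\mu)}_1(j)\), exploiting that \(\overline{U^{(1)}_1(j)}\) has uniformly bounded \(d^\mu\)-diameter for large \(\mu\) by (i) -- or by globalising: treating \(\bigcup_j U^{(r_\mu)}_1(j)\) as a single collapsed region and invoking (iv) only at one entrance and one exit, rather than per excursion.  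Making one of these bookkeeping strategies rigorous is, I expect, the most delicate step in the full proof.
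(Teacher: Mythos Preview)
Your overall setup --- extend \(\Phi\) to a set-theoretic \(\tilde{\Phi}: E_1 \to E_2\) and verify it is an \(\epsilon_\mu\)-isometry --- matches the paper's, as does your identification of the excursion-counting obstacle. The paper resolves that obstacle cleanly via \lref{comb-decomp}: for \(x, y \in E^{(r)}\), one writes \(d^\mu(x,y)\) as a minimum over ordered tuples of \emph{distinct} indices \((j_1, \ldots, j_k) \in [N]^{(k)}\), by recording the first region \(U^{(r)}_1(j_1)\) entered and then tracking the \emph{last} exit time from that region before moving on. Since the indices are forced to be distinct, \(k \le N\), so the number of ``excursions'' is bounded by \(N\) once and for all. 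This is exactly your ``coalescing'' idea made precise, but note it works not because of a diameter bound on \(\overline{U^{(1)}_1(j)}\), but because the formula records the full metric \(d^\mu(x_i, y_i)\) between the boundary crossing points, which is then controlled by condition (iv).

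Where your approach diverges --- and acquires a gap --- is in the reverse bound \(d^\mu \le d^\infty + O(\epsilon_\mu)\). You route through \(B\): lift a near-minimising \(\hat{\mathcal{L}}\)-path via \lref{DCT-seq-constr}, then compare the \(\hat{g}^\mu\)- and \(\hat{g}^\infty\)-lengths of the lift via uniform convergence in (i). But the lifts produced in \lref{DCT-seq-constr} are \emph{not} controlled in a reference metric \(\hat{h}\): on the complementary intervals of the Vitali cover, the construction invokes \lref{lifting-lem} to choose an arbitrary piecewise-\(C^1\) lift with prescribed endpoints, and its \(\hat{h}\)-length may be large. So the estimate \(\ell^\mu(\tilde\gamma_n) \le \ell^\infty(\tilde\gamma_n) + O(\sqrt{\epsilon_\mu})\,\ell^h(\tilde\gamma_n)\) does not yield a useful bound. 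The paper avoids this detour entirely: the combinatorial formula of \lref{comb-decomp} holds verbatim for \(\mu = \infty\) as well, so both inequalities reduce to showing that the \emph{intrinsic} metrics \(d^{\mu,r} \to d^{\infty,r}\) uniformly on \(E^{(r)}\) for each fixed \(r\) (\lref{smooth-conv}). That convergence is proved pointwise --- the \(\limsup\) direction by evaluating \(\hat{g}^\mu \to \hat{g}^\infty\) on a \emph{fixed} test path, the \(\liminf\) direction from \(\hat{g}^\mu \ge \Lambda_\mu(r)^2 \hat{g}^\infty\) --- and upgraded to uniform via equicontinuity (all \(d^{\mu,r}\) are uniformly Lipschitz with respect to a fixed reference metric). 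Subtracting the two instances of the decomposition formula then gives the symmetric bound \(|d^\mu - d^\infty| \le (N+2)\sup|d^{\mu,r} - d^{\infty,r}| + N\cdot(\text{boundary term from (iv)})\), after which one takes \(\limsup_\mu\) then \(\limsup_{r\to 0}\) (no diagonal choice of \(r_\mu\) is needed).
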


The proof proceeds via a series of lemmas.

\subsection{Estimating Lipschitz constants}

\begin{Lem}\label{LC-vs-norm}
Let \((\bb{A},g)\) be a finite-dimensional inner product space and write \(\|-\|_g\) for the norm induced by \(g\) on the space \(\ss{2}\bb{A}^*\) of symmetric bilinear forms.  Then for any \(h \in \ss{2}\bb{A}^*\):
\e\label{lc-vs-norm-eqn}
h \le \|h\|_g\cdot g.
\ee
\end{Lem}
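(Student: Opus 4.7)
The plan is to reduce to a diagonal computation by simultaneously diagonalizing $h$ with respect to $g$. Since $g$ is an inner product and $h$ is symmetric, the spectral theorem (applied to the $g$-self-adjoint endomorphism $g^{-1}h$ of $\bb{A}$) yields a $g$-orthonormal basis $e_1,\ldots,e_n$ of $\bb{A}$ and real numbers $\la_1,\ldots,\la_n$ such that $h(e_i,e_j) = \la_i\de_{ij}$. With respect to this basis the induced inner product on $\ss{2}\bb{A}^*$ makes the tensors $e^i\s e^j$ pairwise orthogonal, so one immediately computes that
\ew
\|h\|_g^2 = \sum_{i=1}^n \la_i^2 \ge \max_{i}\la_i^2,
\eew
whence $\|h\|_g \ge \max_i |\la_i|$.

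Next I would verify \eref{lc-vs-norm-eqn} pointwise by expanding an arbitrary $v \in \bb{A}$ in the basis $\{e_i\}$. Writing $v = \sum_i v^i e_i$, one has $g(v,v) = \sum_i (v^i)^2$ and $h(v,v) = \sum_i \la_i (v^i)^2$, so
\ew
h(v,v) \le \lt(\max_i |\la_i|\rt) \sum_i (v^i)^2 \le \|h\|_g\cdot g(v,v),
\eew
which is exactly the asserted inequality $h \le \|h\|_g\cdot g$.

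There is no real obstacle here; the only subtlety is agreeing on the normalization of the induced inner product on $\ss{2}\bb{A}^*$, which must be the one making $\{e^i\s e^j\}_{i\le j}$ into an orthogonal (Frobenius-type) basis so that $\|h\|_g^2 = \sum \la_i^2$ in the diagonalized form; any convention that differs by a universal positive factor only strengthens the bound and the statement still holds. The proof is then a two-line application of the spectral theorem.
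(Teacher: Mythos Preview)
Your proof is correct. The paper takes a slightly more elementary route that avoids the spectral theorem entirely: given any vector \(a\) with \(g(a)=1\), extend it to a \(g\)-orthonormal basis \((a_1=a,a_2,\ldots,a_n)\); since the paper's norm is explicitly the Frobenius norm \(\|h\|_g^2=\sum_{i,j}h(a_i,a_j)^2\), the single term \(h(a)^2=h(a_1,a_1)^2\) already appears as one nonnegative summand, giving \(|h(a)|\le\|h\|_g\) immediately. Your argument instead diagonalises \(h\) first and bounds \(h(v,v)\) by the spectral radius \(\max_i|\la_i|\), which is in turn dominated by the Frobenius norm. Both reach the same conclusion; the paper's version is marginally shorter (no spectral theorem needed), while yours produces the sharper intermediate inequality \(h\le(\max_i\la_i)\,g\). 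Your normalisation worry is unfounded here: with the paper's convention \(\|h\|_g^2=\sum_{i,j}h(a_i,a_j)^2\), in your diagonalising basis one gets \(\|h\|_g^2=\sum_i\la_i^2\) exactly, with no stray constant.
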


\begin{proof}
Firstly, recall the definition of \(\|h\|_g\). Pick any \(g\)-orthonormal basis \((a_1,...,a_n)\) of \(\bb{A}\). Then:
\ew
\|h\|_g = \sqrt{\sum_{i,j=1}^n h(a_i,a_j)^2}.
\eew
Now fix any vector \(a \in \bb{A}\).  By scale invariance of \eref{lc-vs-norm-eqn}, one may assume \wlg\ that \(g(a) = 1\).  One can then extend \(a\) to a \(g\)-orthonormal basis \((a_1 = a,...,a_n)\) of \(\bb{A}\) and compute:
\ew
h(a) &\le |h(a)|\\
&= \sqrt{h(a)^2}\\
&\le \sqrt{\sum_{i,j=1}^n h(a_i,a_j)^2} = \|h\|_g,
\eew
as required.

\end{proof}

\subsection{Convergence on the regions \(E^{(r)}\)}

Consider the region \(E^{(r)} \pc E_1 \osr S_1 \cong E_2 \osr S_2\).  The restriction of each stratified Riemannian metric \(\h{g}^\mu\) to \(E^{(r)}\) induces a length structure and hence a metric on \(E^{(r)}\), denoted \(d^{\mu,r}\).  (Note that, in general, \(d^{\mu,r} \ne d^\mu|_{E^{(r)}}\) since the metric on the left-hand side is intrinsic, defined by optimising over the length of paths contained only in \(E^{(r)}\), while the metric on the right-hand side is extrinsic, defined by optimising over the length of paths in \(E_1\).)  Analogously, the restriction of the stratified Riemannian semi-metric \(\h{g}^\0\) to \(E^{(r)}\) induces a weak length structure and hence a semi-metric on \(E^{(r)}\) denoted \(d^{\0,r}\) (where again \(d^{\0,r} \ne d^\0|_{E^{(r)}}\) in general).

\begin{Lem}\label{smooth-conv}
Assuming condition (i) from \tref{Cvgce-Thm-1-FV}, for all fixed \(r \in (0,1]\):
\ew
d^{\mu,r} \to d^{\0,r} \text{ uniformly as } \mu \to \0.
\eew
\end{Lem}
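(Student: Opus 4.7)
The plan is to sandwich $d^{\mu,r}$ between $\La_\mu(r) d^{\0,r}$ below and an auxiliary intrinsic metric $F_\mu$ above, both converging uniformly to $d^{\0,r}$ on $E^{(r)} \x E^{(r)}$.  Throughout I would fix a reference stratified Riemannian metric $\h{h}$ on $E_2$.  For the lower bound, integrating the hypothesis $\h{g}^\mu \ge \La_\mu(r)^2 \h{g}^\0$ pointwise along any piecewise-$C^1$ path $\ga$ in $E^{(r)}$ yields $\ell^{\h{g}^\mu}(\ga) \ge \La_\mu(r) \ell^{\h{g}^\0}(\ga)$ and hence $d^{\mu,r} \ge \La_\mu(r) d^{\0,r}$; since $\h{g}^\0 \le C\h{h}$ on the compact set $E^{(r)}$, $d^{\0,r}$ is bounded above by some $D_r < \0$, and so $d^{\mu,r} - d^{\0,r} \ge -(1 - \La_\mu(r)) D_r \to 0$ uniformly.

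For the upper bound, I would apply \lref{LC-vs-norm} pointwise to the symmetric bilinear form $\h{g}^\mu - \h{g}^\0$ with reference inner product $\h{h}$ to obtain
\ew
(\h{g}^\mu - \h{g}^\0)(u) \le \|\h{g}^\mu - \h{g}^\0\|_{\h{h}} \cdot \h{h}(u) \le \ep_\mu \h{h}(u), \qquad u \in \T E^{(r)},
\eew
where $\ep_\mu := \sup_{E^{(r)}} \|\h{g}^\mu - \h{g}^\0\|_{\h{h}} \to 0$ by hypothesis (i) of \tref{Cvgce-Thm-1-FV}.  Combining with the elementary inequality $\sqrt{a+b} \le \sqrt{a} + \sqrt{b}$ for $a,b \ge 0$ and integrating yields the pointwise path estimate
\ew
\ell^{\h{g}^\mu}(\ga) \le \ell^{\h{g}^\0}(\ga) + \sqrt{\ep_\mu}\,\ell^{\h{h}}(\ga)
\eew
for every piecewise-$C^1$ path $\ga$ in $E^{(r)}$.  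Let $F_\mu$ denote the intrinsic metric on $E^{(r)}$ induced by the length structure $\ell^{\h{g}^\0} + \sqrt{\ep_\mu}\,\ell^{\h{h}}$; taking infima over $\ga$ gives $d^{\mu,r} \le F_\mu$ pointwise.

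The remaining step is to show $F_\mu \to d^{\0,r}$ uniformly on $E^{(r)} \x E^{(r)}$, for which I would invoke Dini's theorem.  After replacing $\ep_\mu$ by $\sup_{\mu' \ge \mu} \ep_{\mu'}$ one may assume \wlg\ that $\ep_\mu$ is monotonically decreasing, whereupon $F_\mu$ is monotonically decreasing in $\mu$.  Each $F_\mu$ is a continuous metric on $E^{(r)}$ (the added $\sqrt{\ep_\mu}\,\ell^{\h{h}}$ term forces $F_\mu$ to induce the $\h{h}$-topology), and the limit $d^{\0,r}$ is itself continuous since $d^{\0,r} \le K d^{\h{h},r}$.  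Pointwise convergence $F_\mu \searrow d^{\0,r}$ is established for each fixed $(p,q)$ by choosing a near-$\h{g}^\0$-minimising path $\ga_0$ and noting $\sqrt{\ep_\mu}\,\ell^{\h{h}}(\ga_0) \to 0$.  Dini's theorem on the compact space $E^{(r)} \x E^{(r)}$ then upgrades this to uniform convergence, completing the argument.  The main obstacle is exactly this uniformity step: one cannot in general produce a single near-$\h{g}^\0$-minimising path with uniformly bounded $\h{h}$-length, since vertical wandering can inflate $\h{h}$-length when $\h{g}^\0$ is degenerate --- the auxiliary metric $F_\mu$ combined with Dini's theorem provides the natural workaround.
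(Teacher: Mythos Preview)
Your proof is correct and takes a genuinely different route for the upper bound. The lower bound via $\La_\mu(r)$ matches the paper exactly. For the upper bound, the paper instead first establishes \emph{pointwise} convergence $d^{\mu,r}(x,y) \to d^{\0,r}(x,y)$ (fixing a path $\ga$ and using $\ell_{\h{g}^\mu}(\ga) \to \ell_{\h{g}^\0}(\ga)$), then shows that the family $\{d^{\mu,r}\}$ is uniformly Lipschitz on $E^{(r)} \x E^{(r)}$ with respect to a reference metric (from the uniform bound $\h{g}^\mu \le C\h{h}$), and finally upgrades pointwise to uniform convergence via an Ascoli--Arzel\`a-type statement (equicontinuous $+$ pointwise convergent $\Rightarrow$ uniform on compacta). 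Your auxiliary metric $F_\mu$ combined with Dini's theorem is a legitimate alternative: you trade the equicontinuity bookkeeping for a monotonicity argument. The paper's approach has the minor advantage that equicontinuity only requires the crude bound $\h{g}^\mu \le C\h{h}$ rather than the sharper \lref{LC-vs-norm} estimate; conversely, your approach yields a slightly more explicit upper envelope and avoids invoking Ascoli--Arzel\`a. Both handle the obstacle you correctly identify---the lack of uniform $\h{h}$-length control on near-$\h{g}^\0$-minimisers---by routing through a compactness principle rather than attempting a direct uniform estimate.
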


\begin{proof}
Fix \(x,y \in E^{(r)}\) and let \(\ga\) be any piecewise-\(C^1\) path from \(x\) to \(y\) in \(E^{(r)}\).  Since \(\h{g}^\mu \to \h{g}^\0\) uniformly on \(E^{(r)}\), it follows that \(\ell_{\h{g}^\mu}(\ga) \to \ell_{\h{g}^\0}(\ga)\) as \(\mu \to \0\).  Moreover, one clearly has \(d^{\mu,r}(x,y) \le \ell_{\h{g}^\mu}(\ga)\) for each \(\mu\).  Taking first the limit superior over \(\mu\) and then the infimum over all \(\ga\) in this inequality, therefore, yields:
\e\label{limsup<}
\limsup_\mu d^{\mu,r}(x,y) \le d^{\0,r}(x,y).
\ee

Conversely, for any \(\ga\) as above, by \eref{La-cvgce} in condition (i), one has:
\ew
\ell_{\h{g}^\mu}(\ga) \ge \La_\mu\ell_{\h{g}^\0}(\ga) \ge \La_\mu d^{\0,r}(x,y).
\eew
Since \(\La_\mu \to 1\) as \(\mu \to \0\), taking firstly the infimum over all \(\ga\) and then the limit inferior over all \(\mu\) yields:
\e\label{liminf>}
\liminf_\mu d^{\mu,r}(x,y) \ge d^{\0,r}(x,y).
\ee
Combining \erefs{limsup<} and \eqref{liminf>} gives:
\ew
d^{\0,r}(x,y) \le \liminf_\mu d^{\mu,r}(x,y) \le \limsup_\mu d^{\mu,r}(x,y) \le d^{\0,r}(x,y)
\eew
for all \(x, y \in E^{(r)}\) and hence \(d^{\mu,r} \to d^{\0,r}\) pointwise on \(E^{(r)}\).

Now fix some reference stratified Riemannian metric \(\h{h}\) on \(E_1\) and write \(d^r\) for the (intrinsic) metric on \(E^{(r)}\) induced by \(\h{h}|_{E^{(r)}}\). By assumption, for each stratum \(E_i\) of \(E_1 \osr S_1\):
\ew
\|g^\mu_i - g^\0_i\|_{h_i} \to 0 \text{ uniformly on } E_i \cap E^{(r)} \text{ as } \mu \to \0,
\eew
where \(\|-\|_{h_i}\) denotes the pointwise norm on symmetric bilinear forms induced by \(h_i\), as in \lref{LC-vs-norm}.  Next, note that since \(g^\0_i\) can be continuously extended to a semi-metric \(\overline{g}^\0_i\) on all of \(E_2\) (and likewise for \(h_i\)), then, by compactness of \(E_2\), there exists some constant \(C_i > 0\) such that \(\|g^\0_i\|_{h_i} \le \frac{C_i}{2}\) on all of \(E_i \cap E^{(r)}\).  It follows that for all sufficiently large \(\mu\):
\ew
\|g^\mu_i\|_{h_i} \le C_i \text{ on } E_i \cap E^{(r)}.
\eew
Thus for \(C \ge \max_i C_i\) sufficiently large, it follows from \lref{LC-vs-norm} that for all \(\mu\) (including \(\mu = \0\)) \(\h{g}^\mu \le C \h{h}\) and in particular:
\ew
d^{\mu, r} \le C^\frac{1}{2} d^r.
\eew
By applying the triangle inequality, it follows that for all pairs \((x,y),(x',y') \in E^{(r)} \x E^{(r)}\) and all \(\mu\) (including \(\mu=\0\)):
\ew
\lt|d^{\mu,r}(x,y) - d^{\mu,r}(x',y')\rt| \le C^\frac{1}{2}\lt(d^r(x,x') + d^r(y,y')\rt)
\eew
and thus the family of functions \(d^{\mu,r}: E^{(r)} \x E^{(r)} \to \bb{R}\) is uniformly Lipschitz and hence equicontinuous.  By combining this equicontinuity with the pointwise convergence \(d^{\mu,r} \to d^{\0,r}\), the proof of \pref{smooth-conv} is now completed by the following variant of the well-known Ascoli--Arzel\`{a} theorem:
\begin{Thm}
Let \((X,d)\) be a compact metric space and let \(f_n:X \to \bb{R}\) be equicontinuous functions converging pointwise to a continuous function \(f\).  Then \(f_n \to f\) uniformly.
\end{Thm}

The proof is simple, therefore, I omit it.  This completes the proof of \lref{smooth-conv}.

\end{proof}

\subsection{Combinatorial preliminaries}

Recall that the `singular' regions \(S_1(j)\) are indexed by \(j \in \{1,...,N\}\) and likewise for \(E_2\).  Recall also the sets \(\del^{(r)}(j) = \del U_1^{(r)}(j) \cong \del U_2^{(r)}(j)\).  Write \([N] = \{1,...,N\}\) and given any \(1 \le k \le N\), let \([N]^{(k)}\) denote the set of ordered tuples of \(k\) distinct elements of \([N]\), which will be denoted \((j_1,...,j_k)\).  For notational convenience, use \(\w\) to denote the binary minimum of two numbers, i.e.: \(a \w b = \min(a,b)\).

\begin{Lem}\label{comb-decomp}
Fix \(r \in (0,1]\) and let \(x,y \in E^{(r)}\).  Then for all \(\mu \ge 1\) (including \(\mu = \0\)):
\e\label{comb-dec-eqn}
d^\mu(x &,y) = d^{\mu,r}(x,y) \w\\
& \min_{1 \le k \le N} \lt[ \min_{\lt(j_1,...,j_k\rt) \in [N]^{(k)}} \lt( \inf_{\substack{x_i,y_i \in \del^{(r)}(j_i) \\ i = 1,...,k}} d^{\mu,r}(x,x_1) + d^{\mu,r}(y_n,y) + \sum_{i=1}^k d^\mu(x_i,y_i) + d^{\mu,r}(y_i,x_i) \rt) \rt].
\ee

In particular:
\ew
\sup_{x,y \in E^{(r)}} \lt|d^\mu(x,y) - d^\0(x,y)\rt| \le &(N+2)\sup_{x',y' \in E^{(r)}} \lt|d^{\mu,r}(x',y') - d^{\0,r}(x',y')\rt|\\
& + N \max_{j \in [N]} \sup_{x'',y'' \in \del^{(r)}(j)} \lt|d^\mu(x'',y'') - d^\0(x'',y'')\rt|.
\eew
\end{Lem}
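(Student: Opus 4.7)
The plan is to prove \eqref{comb-dec-eqn} as a two-sided inequality by identifying the right-hand side with the infimum of $\h{g}^\mu$-length over paths from $x$ to $y$, classified by how they alternate between the bulk region $E^{(r)}$ and the singular neighbourhoods $U^{(r)}_1(j)$; the ``in particular'' estimate then follows from \eqref{comb-dec-eqn} by a term-by-term application of the triangle inequality.

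For the $\le$ direction, given any $k$, tuple $(j_1,\ldots,j_k) \in [N]^{(k)}$, and boundary points $x_i, y_i \in \del^{(r)}(j_i)$, I would fix $\eta>0$ and concatenate piecewise-$C^1$ paths realising each $d^{\mu,r}$-distance in $E^{(r)}$ to within $\eta/(2k+2)$ with paths realising each $d^\mu(x_i,y_i)$ in $E_1$ to within $\eta/(2k+2)$. The resulting path from $x$ to $y$ shows that $d^\mu(x,y)$ is bounded above by the expression inside the brackets plus $\eta$; letting $\eta \to 0$ and taking the infimum yields $\le$. The $d^{\mu,r}(x,y)$ term alone covers the case of paths staying entirely in $E^{(r)}$.

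For the $\ge$ direction, fix $\eta>0$ and let $\ga:[0,1] \to E_1$ be a piecewise-$C^1$ path from $x$ to $y$ with $\ell^{\h{g}^\mu}(\ga) \le d^\mu(x,y) + \eta$. If $\ga \cc E^{(r)}$, then $\ell^{\h{g}^\mu}(\ga) \ge d^{\mu,r}(x,y)$. Otherwise, let $J \cc [N]$ be the set of indices $j$ for which $\ga$ meets $U^{(r)}_1(j)$; the disjointness condition \eqref{dj} ensures the closures of these neighbourhoods are pairwise disjoint. For each $j \in J$, let $x_j$ (resp.\ $y_j$) be the first (resp.\ last) point at which $\ga$ crosses $\del^{(r)}(j)$, and enumerate $J = \{j_1,\ldots,j_k\}$ in the order in which the $x_{j_i}$ occur along $\ga$. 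Disjointness of the $\overline{U^{(1)}_1(j)}$ guarantees that the arcs of $\ga$ between successive $y_{j_i}$ and $x_{j_{i+1}}$ lie in $E^{(r)}$ and so contribute at least $d^{\mu,r}(y_{j_i}, x_{j_{i+1}})$, while the arcs between $x_{j_i}$ and $y_{j_i}$ are paths in $E_1$ joining those two points and so contribute at least $d^\mu(x_{j_i}, y_{j_i})$. Summing all contributions and letting $\eta \to 0$ yields $\ge$.

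The ``in particular'' estimate then follows by applying \eqref{comb-dec-eqn} to both $d^\mu$ and $d^\0$, using a near-optimal decomposition for one metric as a trial decomposition for the other, and estimating term by term: there are at most $k+1 \le N+1 \le N+2$ occurrences of a $d^{(\,\cdot\,),r}$-distance, each contributing an error bounded by $\sup_{E^{(r)}\x E^{(r)}}|d^{\mu,r} - d^{\0,r}|$, plus at most $k \le N$ occurrences of a boundary-to-boundary $d^{(\,\cdot\,)}$-distance, each contributing an error bounded by $\max_j \sup_{\del^{(r)}(j)\x \del^{(r)}(j)} |d^\mu - d^\0|$. The main subtlety to address is the consolidation step in the $\ge$ direction when $\ga$ visits some $U^{(r)}_1(j)$ multiple times: the sub-arc of $\ga$ between the first entry $x_j$ and the last exit $y_j$ may leave and re-enter $U^{(r)}_1(j)$, but it remains a path in $E_1$ joining $x_j$ to $y_j$, so its $\h{g}^\mu$-length still bounds $d^\mu(x_j, y_j)$ from below -- this is precisely why the bracketed sum uses the global distance $d^\mu$ rather than an intrinsic distance on $U^{(r)}_1(j)$, and correspondingly why the tuples $(j_1,\ldots,j_k)$ need only range over distinct indices.
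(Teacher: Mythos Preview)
Your overall strategy matches the paper's, and both the $\le$ direction and the ``in particular'' derivation are fine.  However, there is a genuine gap in your $\ge$ argument.  Ordering $J$ by first-entry times does not guarantee that the time intervals $[\text{first entry to }U^{(r)}_1(j),\ \text{last exit from }U^{(r)}_1(j)]$ are pairwise disjoint, or even non-nested.  For instance, a path could run
\[
E^{(r)} \to U^{(r)}_1(1) \to E^{(r)} \to U^{(r)}_1(2) \to E^{(r)} \to U^{(r)}_1(1) \to E^{(r)};
\]
then $x_1$ precedes $x_2$ in time, but the last exit $y_1$ from $U^{(r)}_1(1)$ occurs \emph{after} both $x_2$ and $y_2$, so your ``arc of $\ga$ between $y_{j_1}$ and $x_{j_2}$'' runs backwards and the sub-arcs you describe do not partition the domain of $\ga$.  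Your final paragraph correctly handles re-entry into the \emph{same} region, but it does not address this interleaving between \emph{different} regions.

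The paper's fix is to build the tuple inductively rather than all at once: let $j_1$ be the index of the first region entered and set $t_1$ to be the \emph{last} time $\ga$ lies in $U^{(r)}_1(j_1)$; then restrict to $\ga|_{[t_1,b]}$ and repeat.  Since $\ga$ never revisits $U^{(r)}_1(j_1)$ after time $t_1$, the next index $j_2$ is automatically distinct from $j_1$, and inductively all the $j_i$ are distinct and the associated sub-arcs partition $[a,b]$ in order.  In the example above this yields the characteristic tuple $(1)$, not $(1,2)$: the entire excursion through $U^{(r)}_1(2)$ is absorbed into the single global term $d^\mu(x_1,y_1)$.  With this modification your argument goes through.
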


\begin{proof}
It \stp\ \eref{comb-dec-eqn}, the final claim being a direct consequence of this.  Eqn.\hs{0.6mm}(\ref{comb-dec-eqn}) is proved in the case \(\mu < \0\), the case \(\mu = \0\) being identical.

Write \(\Om(x,y)\) for the set of all piecewise-\(C^1\) paths \(\ga\) from \(x\) to \(y\) in \(E_1\) and \(\Om_0(x,y) \cc \Om(x,y)\) for the set of all piecewise-\(C^1\) paths \(\ga\) from \(x\) to \(y\) which lie entirely within \(E^{(r)}\).  Let \(\ga \in \Om(x,y) \osr \Om_0(x,y)\) and write \([a,b] \pc \bb{R}\) for the domain of \(\ga\).  Assign to \(\ga\) an index \(j_1(\ga) \in [N]\) and a number \(t_1(\ga) \in [a,b]\) as follows:

Define:
\ew
t_0(\ga) = \inf\lt\{t \in [a,b] ~\m|~ \ga(t) \in U^{(r)}_1(j) \text{ for some } j \in [N] \rt\},
\eew
the right-hand side being non-empty, precisely because \(\ga \notin \Om_0(x,y)\).  Then by \eref{dj} in condition 3 of \ntref{cvgce-nt}, there is a unique \(j \in [N]\) such that \(\ga(t_0(\ga)) \in \overline{U^{(r)}_1(j)}\); denote this unique \(j\) by \(j_1(\ga)\).  Now define:
\ew
t_1(\ga) = \sup\lt\{t \in [a,b] ~\m|~ \ga(t) \in U^{(r)}_1(j_1(\ga)) \rt\}.
\eew

Now suppose that \(t_1(\ga) < b\) and that the path \(\ga|_{[t_1(\ga),b]} \notin \Om_0(\ga(t_1(\ga)),b)\).  Then one may define:
\ew
j_2(\ga) = j_1(\ga|_{[t_1(\ga),b]}) \et t_2(\ga) = t_1(\ga|_{[t_1(\ga),b]}).
\eew
(Observe that \(j_2(\ga) \ne j_1(\ga)\), since \(\ga\) never lies in \(U^{(r)}_1(j_1(\ga))\) after time \(t_1(\ga)\).)  One may continue in this vein, defining:
\ew
j_{k+1}(\ga) = j_1(\ga|_{[t_k(\ga),b]}) \et t_{k+1}(\ga) = t_1(\ga|_{[t_k(\ga),b]}),
\eew
until either \(t_k(\ga) = b\) for some \(k\) or \(\ga|_{[t_k(\ga),b]} \in \Om_0(\ga(t_k(\ga)),b)\), one of these two conditions necessarily being reached for some \(k \in [N]\) due to the fact that the \(j_1(\ga), j_2(\ga), j_3(\ga),...\) are all distinct elements of the finite set \([N]\).  Call:
\ew
(j_1(\ga),...,j_k(\ga)) \in [N]^{(k)}
\eew
the characteristic tuple of \(\ga\).

Now for each \(k \in [N]\) and each tuple \({\bf i} \in [N]^{(k)}\), define \(\Om_{\bf i}(x,y)\) to be the set of all \(\ga \in \Om(x,y)\) with characteristic tuple \(\bf i\).  The above discussion shows that there is a disjoint union:
\ew
\Om(x,y) = \Om_0(x,y) \coprod \lt(\coprod_{\substack{k \in [N]\\ {\bf i} \in [N]^{(k)}}} \Om_{\bf i}(x,y)\rt).
\eew
However, by definition:
\ew
\inf_{\ga \in \Om_0(x,y)} \ell_{\h{g}^\mu}(\ga) = d^{\mu,r}(x,y).
\eew
Similarly, for each \(k \in [N]\) and \({\bf i} \in [N]^{(k)}\), one may verify that:
\ew
\inf_{\ga \in \Om_{\bf{i}}(x,y)} \ell_{\h{g}^\mu}(x,y) = \inf_{\substack{x_i,y_i \in \del^{(r)}(j_i)\\ i = 1,...,k}} d^{\mu,r}(x,x_1) + d^{\mu,r}(y_n,y) + \sum_{i=1}^k d^\mu(x_i,y_i) + d^{\mu,r}(y_i,x_i).
\eew
The result now follows.

\end{proof}

\subsection{Completing the proof of \tref{SM-conv-thm}}

Let \(\tld{\Ph}\) be some fixed, possibly discontinuous, extension of the map \(\Ph: E_1 \osr S_1 \to E_2 \osr S_2\) such that \(\tld{\Ph}(S_1) \cc S_2\) and the following diagram:
\ew
\bcd
U^{(1)}_1(j) \ar[rr, " \tld{\Ph} "] \ar[rd, " \fr{f}_{1,j} "'] & & U^{(1)}_2(j) \ar[dl, " \fr{f}_{2,j} "]\\
& S(j) &
\ecd
\eew
commutes for each \(j \in \{1,...,N\}\).  Explicitly, for each \(j \in \{1,...,N\}\) and each \(x \in S_1(j)\), choose some point \(y \in \fr{f}_{2,j}^{-1}(\{\fr{f}_{1,j}(x)\}) \cap S_2(j)\) and define \(\tld{\Ph}(x) = y\).  Recalling the definition of forwards discrepancy from \sref{H&GHD}, to prove \tref{SM-conv-thm} it suffices to prove that, given any \(\eta>0\), \(\tld{\Ph}\) is an \(\eta\)-isometry \((E_1,d^\mu) \to (E_2,d^\0)\) whenever \(\mu\) is sufficiently large (depending on \(\eta\)).  This is equivalent to the following two lemmas:

\begin{Lem}\label{e-net}
\(\tld{\Ph}(E_1) \cc E_2\) is dense \wrt\ the semi-metric \(d^\0\).
\end{Lem}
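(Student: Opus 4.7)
The plan is to verify density pointwise: given $y \in E_2$ and $\eta > 0$, exhibit $y' \in \tilde\Phi(E_1)$ with $d^\0(y, y') < \eta$. For $y \in E_2 \osr S_2$, one may simply take $y' = y = \tilde\Phi\bigl(\Phi^{-1}(y)\bigr)$, so the substantive case is $y \in S_2(j)$ for some $j \in \{1,\ldots,N\}$.

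In that case, set $p = \fr{f}_{2,j}(y) \in S(j)$. My first step would be to apply condition (iii) of \tref{Cvgce-Thm-1-FV} to select $r \in (0,1]$ such that
\ew
\diam_{d^\0}\bigl[\fr{f}_{2,j}^{-1}(\{p\}) \cap U^{(r)}_2(j)\bigr] < \eta.
\eew
Since $S_2(j) \cc U^{(r)}_2(j)$, the point $y$ itself belongs to the fibre $F := \fr{f}_{2,j}^{-1}(\{p\}) \cap U^{(r)}_2(j)$, and the task reduces to locating any point of $F$ inside $\tilde\Phi(E_1)$; all points of $F$ are then automatically within $\eta$ of $y$ in $d^\0$.

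To produce such a point, I would invoke surjectivity of $\fr{f}_{1,j}: U^{(1)}_1(j) \to S(j)$ together with the commutative diagram in \ntref{cvgce-nt}(4) to select a preimage $x \in \fr{f}_{1,j}^{-1}(\{p\})$, privileging the choice $x \in S_1(j)$ when available. If such a singular preimage exists, the definition of $\tilde\Phi$ on $S_1$ places $\tilde\Phi(x)$ in $\fr{f}_{2,j}^{-1}(\{p\}) \cap S_2(j) \cc F$, finishing the case with $y' = \tilde\Phi(x)$. Otherwise every preimage of $p$ lies in the open region $U^{(1)}_1(j) \osr S_1(j)$, and via $\Phi$ these correspond bijectively to points of $(E_2 \osr S_2) \cap \fr{f}_{2,j}^{-1}(\{p\})$; one then selects a preimage $x$ lying inside the shrunken neighbourhood $U^{(r)}_1(j)$ and sets $y' = \Phi(x) \in F \cap \tilde\Phi(E_1)$.

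The main obstacle is this last sub-case: verifying that a preimage of $p$ in $U^{(r)}_1(j) \osr S_1(j)$ actually exists whenever no preimage lies in $S_1(j)$. My plan is to argue by contradiction, observing that the alternative would force $F$ to be disjoint from $\tilde\Phi(E_1)$ despite $y \in F \cap S_2(j)$; combining this with the fact that $U^{(r)}_2(j) \osr S_2(j)$ is a manifold-open neighbourhood of the point $y \in S_2(j)$ and pulling nearby points back through $\Phi$, one lifts them to preimages of $p$ (via $\fr{f}_{1,j} = \fr{f}_{2,j} \circ \Phi$) arbitrarily deep inside $U^{(r)}_1(j)$, producing the required $x$ and completing the proof.
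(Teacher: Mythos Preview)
Your argument for $y \in E_2 \osr S_2$ and for $y \in S_2(j)$ admitting a preimage in $S_1(j)$ is essentially the paper's proof: the paper picks $x \in \fr{f}_{1,j}^{-1}(\{p\}) \cap S_1(j)$, observes $\tld{\Ph}(x) \in \fr{f}_{2,j}^{-1}(\{p\}) \cap S_2(j) \cc \fr{f}_{2,j}^{-1}(\{p\}) \cap U^{(r)}_2(j)$ for every $r$, and then lets $r \to 0$ using condition (iii) to conclude $d^\0(y,\tld{\Ph}(x)) = 0$ (not merely $< \eta$). The paper does \emph{not} entertain your residual sub-case at all; it simply selects such an $x$ without comment, treating the non-emptiness of $\fr{f}_{1,j}^{-1}(\{p\}) \cap S_1(j)$ as given.

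Your handling of that residual sub-case, however, has a genuine gap. The maps $\fr{f}_{i,j}$ in \ntref{cvgce-nt}(4) are bare set-theoretic surjections onto the abstract set $S(j)$; no continuity is assumed. So ``pulling nearby points back through $\Ph$'' does not produce preimages of $p$: a point topologically close to $y$ need not satisfy $\fr{f}_{2,j} = p$, and there is nothing forcing the fibre $\fr{f}_{2,j}^{-1}(\{p\})$ to meet $U^{(r)}_2(j) \osr S_2(j)$ at all. (Also, $U^{(r)}_2(j) \osr S_2(j)$ is not a neighbourhood of $y$, since it omits $y$.) The contradiction you sketch therefore does not close. In short: your proof is correct and matches the paper exactly once you accept, as the paper does, that $\fr{f}_{1,j}|_{S_1(j)}$ already hits $p = \fr{f}_{2,j}(y)$; your attempt to dispense with that assumption does not succeed as written.
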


\begin{Lem}\label{fd->0}
\ew
\lim_{\mu \to \0} \sup_{x,y \in E_1} \lt|d^\mu(x,y) - d^\0\lt(\tld{\Ph}(x),\tld{\Ph}(y)\rt)\rt| = 0.
\eew
\end{Lem}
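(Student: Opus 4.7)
The plan is to reduce the task to the combinatorial decomposition of \lref{comb-decomp} and then extend from $E^{(r)}$ to all of $E_1$ using the fibre-collapse provided by conditions (ii) and (iii) of \tref{Cvgce-Thm-1-FV}.

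Given $\eta > 0$, I first fix parameters.  Using conditions (iii) and (iv), choose $r \in (0,1]$ so small that $\max_j \sup_{p \in S(j)} \diam_{d^\0}[\fr{f}_{2,j}^{-1}(\{p\}) \cap U^{(r)}_2(j)] < \eta$ and $\limsup_{\mu \to \0} \max_j \sup_{\del^{(r)}(j)} |d^\mu - d^\0| < \eta$.  Then, combining condition (ii), condition (iv), and \lref{smooth-conv}, choose $\mu_0 \ge 1$ so that for every $\mu \ge \mu_0$ the three quantities $\max_j \sup_{p \in S(j)} \diam_{d^\mu}[\fr{f}_{1,j}^{-1}(\{p\}) \cap U^{(r)}_1(j)]$, $\sup_{E^{(r)} \x E^{(r)}} |d^{\mu,r} - d^{\0,r}|$, and $\max_j \sup_{\del^{(r)}(j)} |d^\mu - d^\0|$ are each less than $\eta$.

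Feeding these four estimates into \lref{comb-decomp} applied simultaneously to $d^\mu$ and $d^\0$ yields the bulk comparison $\sup_{x, y \in E^{(r)}} |d^\mu(x, y) - d^\0(x, y)| \le (2N + 2)\eta$.  Since $\tld{\Ph}$ coincides with $\Ph$ on $E^{(r)} \cc E_1 \osr S_1$, this immediately gives $|d^\mu(x, y) - d^\0(\tld{\Ph}(x), \tld{\Ph}(y))| \le (2N + 2)\eta$ throughout $E^{(r)} \x E^{(r)}$.  To extend the bound to arbitrary $x, y \in E_1$, I construct an approximant $x' \in E^{(r)}$ for each $x \in U^{(r)}_1(j)$ by selecting $x'$ on the same $\fr{f}_{1,j}$-fibre as $x$: the fibre-diameter bound (ii) then supplies $d^\mu(x, x') < \eta$, and, because $\tld{\Ph}(x')$ and $\tld{\Ph}(x)$ lie in the same $\fr{f}_{2,j}$-fibre within $U^{(r)}_2(j)$, (iii) supplies $d^\0(\tld{\Ph}(x), \tld{\Ph}(x')) < \eta$.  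A two-scale refinement -- working with a pair $r < r'$ and applying the Step 1 estimates at the larger scale $r'$ -- handles the degenerate case in which the fibre through $x$ fails to extend beyond $U^{(r)}_1(j)$, in which event $x'$ is instead taken to be a suitable boundary point lying in $\del^{(r)}(j) \pc E^{(r)}$.  Two applications of the triangle inequality then reduce the general comparison to the bulk bound above, and letting $\eta \to 0$ yields the lemma.

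The principal difficulty lies in this final extension step: guaranteeing that for every $x \in U^{(r)}_1(j)$ one can select an approximant $x' \in E^{(r)}$ which is simultaneously $O(\eta)$-close to $x$ in $d^\mu$ and whose image under $\tld{\Ph}$ is $O(\eta)$-close to $\tld{\Ph}(x)$ in $d^\0$.  Disentangling the precise interplay between the two scales $r$ and $r'$, and verifying that the fibre-diameter bounds of conditions (ii) and (iii) genuinely suffice to control the approximation error in all cases -- particularly the degenerate case in which the $\fr{f}_{1,j}$-fibre through $x$ fails to reach $\del^{(r)}(j)$ -- constitutes the main technical hurdle of the argument.
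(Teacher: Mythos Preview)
Your overall strategy is the same as the paper's: reduce to $E^{(r)}\times E^{(r)}$ via \lref{comb-decomp} and \lref{smooth-conv}, then bridge from arbitrary $x\in U^{(r)}_1(j)$ to an approximant $x'\in E^{(r)}$ using the fibre-diameter controls (ii) and (iii). The difference lies in how the approximant $x'$ is chosen, and here the paper is cleaner than your two-scale workaround.

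The paper does \emph{not} look for $x'$ strictly inside the fibre and strictly inside $E^{(r)}$; instead it sets
\[
x'\ \in\ E^{(r)}\cap\overline{\,\fr{f}_{1,j}^{-1}\{\fr{f}_{1,j}(x)\}\cap U^{(r)}_1(j)\,}\ \subset\ \del^{(r)}(j),
\]
i.e.\ a point of the \emph{closure} of the fibre-slice lying on the boundary $\del^{(r)}(j)$. Because the diameter of a set equals the diameter of its closure, condition~(ii) bounds $d^\mu(x,x')$ directly at the single scale $r$; and since $\del^{(r)}(j)\subset E^{(r)}$, the bulk estimate from \lref{comb-decomp} applies at that same scale. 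The commutative square in \ntref{cvgce-nt}(4) then places $x'\in\overline{\fr{f}_{2,j}^{-1}\{\fr{f}_{1,j}(x)\}\cap U^{(r)}_2(j)}$, so condition~(iii) bounds $d^\0(\tld\Ph(x),x')$ likewise. This removes the need for a pair $r<r'$: your two-scale device is really compensating for having chosen $x'$ off the closure, so that it falls outside the set whose diameter (ii) controls.

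As for the ``degenerate case'' in which the fibre through $x$ is compactly contained in $U^{(r)}_1(j)$: note that your two-scale fix does not resolve it either, since such a fibre is then also compactly contained in $U^{(r')}_1(j)$ for every $r'>r$. The paper simply takes the displayed intersection to be non-empty; if you want to make this rigorous you should argue directly that the closure of the fibre-slice meets $\del^{(r)}(j)$ (e.g.\ via connectedness of the fibre together with the surjectivity hypotheses on $\fr{f}_{1,j}$), rather than by changing scales.
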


\begin{proof}[Proof of \lref{e-net}]
Clearly \(\tld{\Ph}(E_1) \supseteq \Ph(E_1 \osr S_1) = E_2 \osr S_2\).  Thus, to prove that \(\tld{\Ph}(E_1)\) is dense, it suffices to prove that for all \(j \in \{1,...,N\}\), all \(w \in S_2(j)\) and all \(\eta>0\), there exists \(x \in E_1\) such that:
\ew
d^\0\lt(w, \tld{\Ph}(x)\rt) \le \eta.
\eew
To this end, given \(w \in S_2(j)\), choose \(x\) to be any point of \(\fr{f}_{1,j}^{-1}(\{\fr{f}_{2,j}(w)\}) \cap S_1(j)\).  Then by definition of \(\tld{\Ph}\):
\ew
\tld{\Ph}(x) &\in \fr{f}_{2,j}^{-1}(\{\fr{f}_{2,j}(w)\}) \cap S_2(j)\\
&\cc \fr{f}_{2,j}^{-1}(\{\fr{f}_{2,j}(w)\}) \cap U^{(r)}_2(j) \text{ for any } r \in (0,1].
\eew
Hence for all \(r \in (0,1]\):
\ew
d^\0\lt(w, \tld{\Ph}(x)\rt) &\le \diam_{d^\0}\lt[ \fr{f}_{2,j}^{-1}(\{\fr{f}_{2,j}(w)\}) \cap U^{(r)}_2(j) \rt]\\
&\le \max_{j \in \{1,...,N\}} ~ \sup_{p \in S(j)} ~ \diam_{d^\0}\lt[ \fr{f}_{2,j}^{-1}(p) \cap U^{(r)}_2(j) \rt]
\eew
and thus:
\ew
d^\0\lt(w, \tld{\Ph}(x)\rt) \le \limsup_{r \to 0} ~ \max_{j \in \{1,...,N\}} ~ \sup_{p \in S(j)} ~ \diam_{d^\0}\lt[ \fr{f}_{2,j}^{-1}(p) \cap U^{(r)}_2(j) \rt] = 0,
\eew
the final equality being condition (iii) in \tref{Cvgce-Thm-1-FV}.  The result follows.

\end{proof}

\begin{proof}[Proof of \lref{fd->0}]
Pick \(x,y \in E_1\) and let \(r \in (0,1]\).  Define (potentially) new points \(x' = x'(r)\) and \(y' = y'(r)\) in \(E^{(r)}\) via:
\ew
x' =
\begin{dcases*}
x & if \(x \in E^{(r)}\);\\
\text{some point in } E^{(r)} \cap \overline{\fr{f}_{1,j}^{-1}(\{\fr{f}_{1,j}(x)\}) \cap U^{(r)}_1(j)} \pc \del^{(r)}(j) & if \(x \in U^{(r)}_1(j)\)
\end{dcases*}
\eew
and analogously for \(y'\).  Note that:
\e\label{shift-bd1}
d^\mu(x,x'), d^\mu(y,y') \le \max_{j \in [N]} \sup_{p \in S(j)} \diam_{d^\mu}\lt[ \fr{f}_{1,j}^{-1}(\{p\}) \cap U^{(r)}_1(j) \rt].
\ee

Next consider \(d^\0\lt(\tld{\Ph}(x),x'\rt)\) and \(d^\0\lt(\tld{\Ph}(y),y'\rt)\), where \(x'\) and \(y'\) are identified with \(\tld{\Ph}(x') = \Ph(x')\) and \(\tld{\Ph}(y') = \Ph(y')\) in the usual way.  Clearly if \(x \in E^{(r)}\), then \(\tld{\Ph}(x) = x = x'\) and \(d^\0\lt(\tld{\Ph}(x),x'\rt) = 0\).  Thus suppose \(x \in U^{(r)}_1(j)\) for some \(j\).  The commutative diagram:
\ew
\bcd
\lt. U^{(1)}_1(j) \m\osr S_1(j) \rt. \ar[rr, " \Ph "] \ar[rd, " \fr{f}_{1,j} "'] & & \lt. U^{(1)}_2(j) \m\osr S_2(j) \rt. \ar[dl, " \fr{f}_{2,j} "]\\
& S(j) &
\ecd
\eew
shows that:
\ew
x' \in E^{(r)} \cap \overline{\fr{f}_{1,j}^{-1}(\{\fr{f}_{1,j}(x)\}) \cap U^{(r)}_1(j)} \cong E^{(r)} \cap \overline{\fr{f}_{2,j}^{-1}(\{\fr{f}_{1,j}(x)\}) \cap U^{(r)}_2(j)} \pc \del^{(r)}(j),
\eew
using the identification \(\Ph\) in the usual way.  Moreover, from the definition of \(\tld{\Ph}\) one may verify that \(\tld{\Ph}(x) \in \fr{f}_{2,j}^{-1}(\{\fr{f}_{1,j}(x)\}) \cap U^{(r)}_2(j)\).  Thus:
\e\label{shift-bd2}
d^\0\lt(\tld{\Ph}(x), x'\rt) \le \max_{j \in [N]} \sup_{p \in S(j)} \diam_{d^\0}\lt[ \fr{f}_{2,j}^{-1}(\{p\}) \cap U^{(r)}_2(j) \rt].
\ee
The same bound holds for \(d^\0\lt(\tld{\Ph}(y), y'\rt)\).  Thus by the triangle inequality:
\ew
\lt|d^\mu(x,y) - d^\0\lt(\tld{\Ph}(x),\tld{\Ph}(y)\rt)\rt| &\le \lt|d^\mu(x',y') - d^\0(x',y')\rt|\\
& \hs{15mm} + d^\mu(x,x') + d^\mu(y,y')\\
& \hs{15mm} + d^\0\lt(\tld{\Ph}(x),x'\rt) + d^\0\lt(y',\tld{\Ph}(y)\rt)\\
&\le \lt|d^\mu(x',y') - d^\0(x',y')\rt|\\
& \hs{15mm} + 2\max_{j \in [N]} \sup_{p \in S(j)} \diam_{d^\mu}\lt[ \fr{f}_{1,j}^{-1}(\{p\}) \cap U^{(r)}_1(j) \rt]\\
& \hs{15mm} + 2\max_{j \in [N]} \sup_{p \in S(j)} \diam_{d^\0}\lt[ \fr{f}_{2,j}^{-1}(\{p\}) \cap U^{(r)}_2(j) \rt],
\eew
where \erefs{shift-bd1} and \eqref{shift-bd2} have been used in passing to the final inequality.  Taking supremum over \(x\) and \(y\) and applying \lref{comb-decomp} yields:
\ew
\sup_{x, y \in E_1} \lt|d^\mu(x,y) - d^\0\lt(\tld{\Ph}(x),\tld{\Ph}(y)\rt)\rt| &\le (N+2)\sup_{x',y' \in E^{(r)}} \lt|d^{\mu,r}(x',y') - d^{\0,r}(x',y')\rt|\\
& \hs{15mm} + N \max_{j \in [N]} \sup_{x'',y'' \in \del^{(r)}(j)} \lt|d^\mu(x'',y'') - d^\0(x'',y'')\rt|\\
& \hs{15mm} + 2\max_{j \in [N]} \sup_{p \in S(j)} \diam_{d^\mu}\lt[ \fr{f}_{1,j}^{-1}(\{p\}) \cap U^{(r)}_1(j) \rt]\\
& \hs{15mm} + 2\max_{j \in [N]} \sup_{p \in S(j)} \diam_{d^\0}\lt[ \fr{f}_{2,j}^{-1}(\{p\}) \cap U^{(r)}_2(j) \rt].
\eew

By \pref{smooth-conv}, taking the limit superior as \(\mu \to \0\) yields:
\ew
\limsup_{\mu \to \0}\sup_{x, y \in E_1} \lt|d^\mu(x,y) - d^\0\lt(\tld{\Ph}(x),\tld{\Ph}(y)\rt)\rt| &\le N \limsup_{\mu \to \0} \max_{j \in [N]} \sup_{x'',y'' \in \del^{(r)}(j)} \lt|d^\mu(x'',y'') - d^\0(x'',y'')\rt|\\
& \hs{15mm} + 2 \limsup_{\mu \to \0} \max_{j \in [N]} \sup_{p \in S(j)} \diam_{d^\mu}\lt[ \fr{f}_{1,j}^{-1}(\{p\}) \cap U^{(r)}_1(j) \rt]\\
& \hs{15mm} + 2\max_{j \in [N]} \sup_{p \in S(j)} \diam_{d^\0}\lt[ \fr{f}_{2,j}^{-1}(\{p\}) \cap U^{(r)}_2(j) \rt].
\eew
Moreover, by conditions (ii), (iii) and (iv) in \tref{Cvgce-Thm-1-FV}, taking the limit as \(r \to 0\) yields:
\ew
\limsup_{\mu \to \0}\sup_{x, y \in E_1} \lt|d^\mu(x,y) - d^\0\lt(\tld{\Ph}(x),\tld{\Ph}(y)\rt)\rt| = 0,
\eew
and hence the limit at \(\mu \to \0\) also equals zero, as required.

\end{proof}

Combining \lrefs{e-net} and \ref{fd->0} shows that \(\fr{D}\lt[ \lt(E_1, d^\mu\rt) \to \lt(E_2, d^\0\rt) \rt] \to 0\) as \(\mu \to \0\), completing the proof of \pref{SM-conv-thm}.  The proof of \tref{Cvgce-Thm-1-FV} is now completed by combining \pref{FMS-Thm} and \pref{SM-conv-thm}, together with \tref{2stage}.

\qed

~\vs{5mm}

\noindent Laurence H.\ Mayther\\
University of Cambridge\\
United Kingdom\\
{\it lhm32@cam.ac.uk}

\end{document}